\documentclass[11pt]{amsart}

\usepackage{amsmath,amsthm,amssymb,mathrsfs,verbatim,graphicx,subfig}
\usepackage[tmargin=1.0in,bmargin=1.0in,rmargin=1.0in,lmargin=1.0in]{geometry}

\theoremstyle{plain}

\newtheorem{theorem}{\textrm{\textbf{Theorem}}}[section]
\newtheorem*{utheorem}{\textrm{\textbf{Theorem}}}

\newtheorem{corollary}[theorem]{\textrm{\textbf{Corollary}}}
\newtheorem{proposition}[theorem]{\textrm{\textbf{Proposition}}}
\newtheorem{lemma}[theorem]{\textrm{\textbf{Lemma}}}

\theoremstyle{definition}

\newtheorem{definition}[theorem]{\textrm{\textbf{Definition}}}

\newtheorem{remark}[theorem]{\textrm{\textbf{Remark}}}

\theoremstyle{remark}

\newtheorem*{acknowledgement}{Acknowledgment}

\numberwithin{equation}{section}

\def\sgn {\mathop{\rm sgn}}
\def\diag {\mathop{\rm diag}}
\def\cond {\mathop{\rm cond}}
\def\hol {\mathop{\rm Hol}}
\def\tr{\mathop{\rm trace}}

\def\mc#1{\multicolumn{1}{|c}{#1}}

\begin{document}
\title{Functions preserving positive definiteness for sparse matrices}
\author{Dominique Guillot \and Bala Rajaratnam \\ Stanford University}

\subjclass[2010]{15B48, 26A48, 05C50, 15A42}
\keywords{Positive definiteness, Entrywise matrix functions, Absolutely monotonic functions, Hard-thresholding, Soft-thresholding, Sparse matrices, Graphs, Trees}
\date{}
\maketitle
\begin{abstract}
We consider the problem of characterizing entrywise functions that preserve the cone of positive definite matrices when applied to every off-diagonal element. Our results extend theorems of Schoenberg [Duke Math. J. 9], Rudin [Duke Math. J. 26], Christensen and Ressel [Trans. Amer. Math. Soc., 243], and others,  where similar problems were studied when the function is applied to all elements, including the diagonal ones. It is shown that functions that are guaranteed to preserve positive definiteness cannot at the same time induce sparsity, i.e., set elements to zero. These results have important implications for the regularization of positive definite matrices, where functions are often applied to only the off-diagonal elements to obtain sparse matrices with better properties (e.g., Markov random field/graphical model structure, better condition number). As a particular case, it is shown that \emph{soft-thresholding}, a commonly used operation in modern high-dimensional probability and statistics, is not guaranteed to maintain positive definiteness, even if the original matrix is sparse. This result has a deep connection to graphs, and in particular, to the class of trees. We then proceed to fully characterize functions which do preserve positive definiteness. This characterization is in terms of absolutely monotonic functions and turns out to be quite different from the case when the function is also applied to diagonal elements. We conclude by giving bounds on the condition number of a matrix which guarantee that the regularized matrix is positive definite. 
\end{abstract}

\section{Introduction}\label{sec:intro}
In one of his celebrated papers, \emph{Positive definite functions on spheres} \cite{Schoenberg42}, I.J. Schoenberg proved that every continuous function $f: (-1,1) \rightarrow \mathbb{R}$ having the property that the matrix $(f(a_{ij}))$ is positive semidefinite for every symmetric positive semidefinite matrix $(a_{ij})$ with entries in $(-1,1)$ has a power series representation with nonnegative coefficients. Functions satisfying this latter property are often known as \emph{absolutely monotonic functions}. The aforementioned result has been generalized by Rudin \cite{Rudin59} who showed that the class of absolutely monotonic functions fully characterizes the class of (not necessarily continuous) functions mapping every positive (semi)definite sequence to a positive (semi)definite sequence. Equivalently, the class of absolutely monotonic functions are exactly the functions mapping sequences of Fourier--Stieltjes coefficients to sequences of Fourier--Stieltjes coefficients. 

In this paper, we revisit and extend Schoenberg's results with important modern applications in mind. Positive definite matrices arise naturally as covariance or correlation matrices. Consider an $n \times n$ covariance (or correlation) matrix $\Sigma$. In modern high-dimensional probability and statistics, two of the most common techniques employed to improve the properties of $\Sigma$ are the so-called \emph{hard-thresholding} and \emph{soft-thresholding} procedures. Hard-thresholding a positive definite matrix entails setting small off-diagonal elements of $\Sigma$ to zero. This technique has the advantage of eliminating spurious or insignificant correlations, and leads to sparse estimates of the matrix $\Sigma$. These thresholded matrices generally have better properties (such as better conditioning, graphical model structure) and lead to models that are easier to store, interpret, and work with. At the same time, in contrast with most ``regularization'' techniques, this procedure incurs very little computational cost. Hence it can be applied to ultra high-dimensional matrices, as required by many modern-day applications (see \cite{Zhang_Horvath, Li_Horvath, bickel_levina, hero_rajaratnam, Guillot_Rajaratnam2012, Hero_Rajaratnam2012}). 

An important property of thresholded covariance matrices that is generally required for applications is positive definiteness. Nonetheless, regularization procedures such as hard-thresholding are often used indiscriminately, and with very little attention paid to the algebraic properties of the resulting thresholded matrices. It is therefore critical to understand whether or not the cone of positive definite matrices is invariant with respect to hard-thresholding (and other similar operations), especially in order for these regularization methods to be widely applicable. 

We now formalize some notation. Given $\epsilon > 0$, the hard-thresholding operation is equivalent to applying the function $f^H_\epsilon : \mathbb{R} \rightarrow \mathbb{R}$ defined by 
\begin{equation}\label{eqn:hard_thres}
  f^H_\epsilon(x) = \left\{\begin{array}{ll}
  x & \textrm{ if } |x| > \epsilon \\
  0 & \textrm{ otherwise}
  \end{array}
  \right.
\end{equation}
to every off-diagonal element of the matrix $\Sigma$. As mentioned above, modern probability and statistics require that the thresholding function is applied only to off-diagonal elements. As a consequence, previous results from the mathematics literature cannot be directly used to determine whether hard-thresholding and other similar techniques preserve positive definiteness. The aim of this paper is to investigate this important question, especially given its significance in contemporary mathematical sciences.  

Algebraic properties of hard-thresholded matrices have been studied in detail in \cite{Guillot_Rajaratnam2012}, where it is shown that, even if the original matrix is sparse, hard-thresholding is not guaranteed to preserve positive definiteness. Thus the function $f_\epsilon^H$ does not map the cone of positive definite matrices into itself.  

A type of function that is equally frequently used in the literature is the so-called \emph{soft-thresholding} function $f^S_\epsilon : \mathbb{R} \rightarrow \mathbb{R}$, given by
\begin{equation}\label{eqn:soft_thres}
f^S_\epsilon(x) = \sgn(x) (|x| - \epsilon)_+, 
\end{equation}
where $\sgn(x)$ denotes the sign of $x$ and $(a)_+ = \max(a,0)$. Compared to hard-thresholding, soft-thresholding continuously shrinks all elements of a matrix to zero, thus giving more hope of preserving positive definiteness than hard-thresholding. To the authors' knowledge, a detailed analysis of whether or not this is true has not been undertaken in the literature. It is also natural to ask whether the hard or soft-thresholding function can be replaced by other functions in order to induce sparsity (i.e., zeros) in positive definite matrices and, at the same time, maintain positive definiteness.

The first theorem of this paper extends results from \cite{Guillot_Rajaratnam2012} and shows the rather surprising result that, for a given positive definite matrix, even if it is already sparse, there is generally no guarantee that its soft-thresholded version will remain positive definite. We state this result below: 

\begin{utheorem}
Let $G = (V,E)$ be a connected undirected graph and denote by $\mathbb{P}_G^+$ the cone of symmetric positive definite matrices with zeros according to $G$ 
\begin{equation}
\mathbb{P}_G^+ := \{A = (a_{ij}) \in \mathbb{P}^+ : a_{ij} = 0 \textrm{ if } (i,j) \not\in E, i \not= j\}, 
\end{equation}
where $\mathbb{P}^+$ denotes the cone of all symmetric positive definite matrices. For $\epsilon > 0$, denote by $\eta_\epsilon(A)$ the soft-thresholded matrix 
\begin{equation}\label{eqn:soft_thres_matrix}
(\eta_\epsilon(A))_{ij} = \left\{\begin{array}{cc}\sgn(a_{ij}) (|a_{ij}|-\epsilon)_+ & \textrm{ if } i \not= j \\
a_{ij} & \textrm{ otherwise}\end{array}\right..
\end{equation}
Then the following are equivalent: 
\begin{enumerate}
\item There exists $\epsilon > 0$ such that for every $A \in \mathbb{P}_G^+$, we have $\eta_\epsilon(A) > 0$; 
\item For every $\epsilon > 0$ and every $A \in \mathbb{P}_G^+$, we have $\eta_\epsilon(A) > 0$;
\item $G$ is a tree. 
\end{enumerate}  
\end{utheorem}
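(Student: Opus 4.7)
The implication (2) $\Rightarrow$ (1) is trivial. The scaling identity $\eta_{\epsilon}(cA)=c\,\eta_{\epsilon/c}(A)$ (for $c>0$, both diagonal and off-diagonal entries) immediately yields (1) $\Rightarrow$ (2): if (1) holds with some $\epsilon_0$, then for any $\epsilon>0$ and $A\in\mathbb{P}_G^+$, setting $A':=(\epsilon_0/\epsilon)A\in\mathbb{P}_G^+$ gives $\eta_\epsilon(A)=(\epsilon/\epsilon_0)\eta_{\epsilon_0}(A')\succ 0$. Thus only (3) $\Rightarrow$ (2) and (1) $\Rightarrow$ (3) need proof.

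For (3) $\Rightarrow$ (2), I would induct on $|V(T)|$. The base case is trivial. For the inductive step, pick a leaf $v$ of $T$ with unique neighbour $p$, let $A\in\mathbb{P}_T^+$, and write $B:=\eta_\epsilon(A)$. The column $A_{\cdot,v}$ has only one nonzero off-diagonal entry (at position $p$), so the Schur complements at $v$ take a particularly simple form: $(A/v)_{ij}=A_{ij}$ for $(i,j)\neq(p,p)$ in $V\setminus v$ and $(A/v)_{pp}=A_{pp}-A_{pv}^{2}/A_{vv}$, with the analogous expression for $B/v$. A direct comparison gives
\[
B/v\;=\;\eta_\epsilon(A/v)+\frac{A_{pv}^{2}-B_{pv}^{2}}{A_{vv}}\,e_p e_p^{T},
\]
since $\eta_\epsilon(A/v)$ and $B/v$ agree on every off-diagonal and on every diagonal except position $p$. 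Now $A/v\in\mathbb{P}_{T\setminus v}^+$ (its zero pattern lies in the tree $T\setminus v$), so the inductive hypothesis yields $\eta_\epsilon(A/v)\succ 0$. Since $|B_{pv}|\le|A_{pv}|$, the rank-one correction is positive semidefinite, so $B/v\succ 0$; together with $B_{vv}=A_{vv}>0$, the Schur criterion concludes $B\succ 0$.

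For (1) $\Rightarrow$ (3), I argue the contrapositive. Suppose $G$ is not a tree, so it contains a cycle $C=v_1v_2\cdots v_kv_1$ with $k\ge 3$. By the scaling identity it suffices to exhibit, for some fixed $\epsilon_0>0$, one matrix $A\in\mathbb{P}_G^+$ with $\eta_{\epsilon_0}(A)\not\succ 0$. I would take $A$ block-diagonal (after relabelling) with a $k\times k$ \emph{cycle block} $A_C$ on $V(C)$ and a positive diagonal on $V(G)\setminus V(C)$, so $A\in\mathbb{P}_G^+$ automatically and $\eta_{\epsilon_0}(A)\succ 0$ iff $\eta_{\epsilon_0}(A_C)\succ 0$. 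The cycle block $A_C$ is constructed using the matching-sum expansion of the cycle determinant,
\[
\det A_C=\sum_{M\text{ matching of }C}(-1)^{|M|}\Bigl(\prod_{v\notin V(M)}d_v\Bigr)\Bigl(\prod_{e\in M}w_e^{2}\Bigr)+2(-1)^{k-1}\prod_{i=1}^{k}w_i,
\]
with highly unbalanced diagonals $d_1=d_2=1$ and $d_3=\cdots=d_k=M$ (with $M$ large), and with signs chosen on the $w_i$ so that the cycle term $2(-1)^{k-1}\prod_i w_i$ contributes \emph{positively}; the magnitudes $|w_i|$ are placed just inside the PD region so $\det A_C$ is small and positive. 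For $k=3$ an explicit $3\times 3$ calculation with $(d_1,d_2,d_3)=(1,1,M)$ and $|w_{13}|,|w_{23}|$ just below $\sqrt{M}$ produces a PD matrix whose determinant becomes negative after the uniform shift $|w_i|\mapsto|w_i|-\epsilon_0$; this single case already handles every $G$ containing a triangle, and the same template on a longer induced cycle handles the triangle-free but non-tree $G$.

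The principal obstacle is the verification in the last paragraph: under soft-thresholding, the negative matching monomials $-d_\bullet w_i^{2}$ become less negative (raising $\det A_C$), while the positive cycle term shrinks in magnitude (lowering $\det A_C$), and for $\det(\eta_{\epsilon_0}A_C)<0$ the latter effect must outweigh the former. The unbalanced diagonals supply the required asymmetry: the \emph{large} off-diagonals $|w_i|\sim\sqrt{M}$ (at edges incident to a $d_v=M$ vertex) shift by only $\epsilon_0$ but multiply together in the cycle term to produce an $\Omega(\epsilon_0 M)$ loss, whereas the \emph{small} off-diagonal on $(v_1,v_2)$ is comparatively trivial. This scale separation is what forces the sign flip. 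The $k=3$ case is a short concrete computation; carrying out the analogous argument uniformly for $k\ge 4$ is mostly book-keeping, with the leading $M$-dependence of both sums making the comparison go through in exactly the same way.
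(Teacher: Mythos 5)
Your high-level architecture is sound: (2)$\Rightarrow$(1) is trivial, the scaling identity correctly gives (1)$\Rightarrow$(2), and your leaf-removal Schur-complement induction for (3)$\Rightarrow$(2) is correct and is essentially the paper's own argument (Theorem~\ref{thm:pd_G_tree} proves it for any $f$ with $|f(x)|\le|x|$ by exactly this induction). The gap is in (1)$\Rightarrow$(3), in the step you yourself single out as the principal obstacle: your asymptotic justification of the sign flip is backwards. Pass to the normalized matrix $R=D^{-1/2}A_CD^{-1/2}$ with $D=\diag(d_1,\dots,d_k)$; soft-thresholding at level $\epsilon_0$ moves the normalized entry on an edge $(u,v)$ by $\epsilon_0/\sqrt{d_ud_v}$, so every entry incident to a weight-$M$ vertex moves by $O(\epsilon_0/\sqrt{M})$ or $O(\epsilon_0/M)$ and is asymptotically irrelevant; the only entry that moves by a fixed amount is the \emph{small} one on $(v_1,v_2)$. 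Equivalently, in your unnormalized expansion the cycle term loses $2\epsilon_0\prod_{e\neq(v_1,v_2)}|w_e|=\Omega(\epsilon_0M^{k-2})$ precisely because the small entry shifts by $\epsilon_0$ and is multiplied by the large complementary product, whereas shifting one of the large $|w_e|$ by $\epsilon_0$ costs only $O(\epsilon_0M^{k-5/2})$. So the $(v_1,v_2)$ entry is not ``comparatively trivial''; it is the entire mechanism, and the large off-diagonals contribute nothing at leading order.

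There is moreover a competing term of the same order that you never address: the matching monomial $-d_3\cdots d_k\,w_{12}^2=-M^{k-2}w_{12}^2$ becomes less negative by $\epsilon_0M^{k-2}\bigl(2|w_{12}|-\epsilon_0\bigr)$, which is also $\Omega(\epsilon_0M^{k-2})$ and pushes the determinant back up. The construction does survive: in normalized terms one needs $2\prod_{e\neq(v_1,v_2)}|\tilde w_e|>2|\tilde w_{12}|-\epsilon_0$ together with $\det R$ small and positive, and this is arranged by taking $|\tilde w_{12}|$ small while making the opened cycle (the path $v_2v_3\cdots v_kv_1$) barely \emph{non}-positive-definite, so that it is the cycle term $2\alpha\prod_{e\neq(v_1,v_2)}\tilde w_e$ that keeps $R$ positive definite in the first place; one must then also check the remaining leading principal minors. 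But as written your verification of the one step everything hinges on rests on a false comparison of orders of magnitude, so the implication (1)$\Rightarrow$(3) is not established. Once repaired, your route (a single cycle block, determinant expansion, unbalanced diagonals, one effective thresholded entry) is genuinely different from the paper's, which instead seeds the argument with an explicit numerical $3\times3$ matrix and grows the cycle one vertex at a time by a bordering/Schur-complement induction that carries along two auxiliary positivity properties; your version, if completed, is arguably more transparent and uniform in $k$, at the price of the minor-by-minor bookkeeping you defer.
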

Note that for a given matrix $A \in \mathbb{P}_G^+$, by the continuity of the eigenvalues, there exists $\epsilon > 0$ such that $\eta_\epsilon(A) > 0$. However, different matrices can lose positive definiteness for different values of $\epsilon$. The existence of a ``universal'' value $\epsilon_0 > 0$ with the property that $\eta_{\epsilon_0}(A) > 0$ for every $A \in \mathbb{P}_G^+$ would have tremendous practical implications. Indeed, if such an $\epsilon_0$ existed, matrices could be safely soft-thresholded to remove some of their small entries while retaining positive definiteness. The previous theorem asserts that, except when the structure of zeros of $A$ corresponds to a tree, such an $\epsilon_0$ unfortunately does not exist. 

Following the previous result, we extend Schoenberg's results by fully characterizing the functions that preserve positive definiteness when applied to every off-diagonal element. The statement of the main theorem of the paper is given below. 
\begin{utheorem}
Let $0 < \alpha \leq \infty$ and let $f: (-\alpha, \alpha) \rightarrow \mathbb{R}$. For every matrix $A = (a_{ij})$, denote by $f^*[A]$ the matrix
\begin{equation}
(f^*[A])_{ij} = \left\{\begin{array}{ll}f(a_{ij}) & \textrm{ if } i \not=j\\
a_{ij} & \textrm{ if } i=j \end{array}\right..
\end{equation}
Then $f^*[A]$ is positive semidefinite for every symmetric positive semidefinite matrix $A$ with entries in $(-\alpha, \alpha)$ if and only if $f(x) = x g(x)$ where: 
\begin{enumerate}
\item $g$ is analytic on the disc $D(0,\alpha)$; 
\item $\|g\|_\infty \leq 1$;
\item $g$ is absolutely monotonic on $(0, \alpha)$. 
\end{enumerate}
When $\alpha = \infty$, the only functions satisfying the above conditions are the affine functions $f(x) = ax$ for $0 \leq a \leq 1$.  
\end{utheorem}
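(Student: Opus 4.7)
The plan is to verify the biconditional by treating the two directions separately, with necessity carrying the bulk of the work.

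For sufficiency, expand $g(z) = \sum_{k \ge 0} c_k z^k$ on $D(0,\alpha)$ with $c_k \ge 0$. Given any PSD matrix $A$ with entries in $(-\alpha,\alpha)$, the Schur product theorem implies each $A^{\circ(k+1)}$ is PSD, so $(f(a_{ij}))_{ij} = \sum_k c_k A^{\circ(k+1)}$ is PSD. The diagonal correction
\begin{equation*}
f^*[A] - \bigl(f(a_{ij})\bigr)_{ij} = \diag\bigl(a_{ii}(1 - g(a_{ii}))\bigr)
\end{equation*}
is PSD as well, since $a_{ii} \ge 0$ and $0 \le g(a_{ii}) \le \|g\|_\infty \le 1$ (the lower bound from $c_k \ge 0$, the upper from $|g(z)| \le g(|z|)$). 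Adding gives $f^*[A] \succeq 0$.

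For necessity, I first extract elementary pointwise bounds. Testing the hypothesis on the rank-one $2\times 2$ PSD matrix with diagonal $|x|$ and off-diagonal $x$ yields $|f(x)| \le |x|$, hence $f(0) = 0$ and $g(x) := f(x)/x$ is well-defined on $(-\alpha,\alpha) \setminus \{0\}$ with $|g| \le 1$. Applying the hypothesis to $xJ_n + (d-x)I_n$ with $d = x > 0$, which is PSD for every $n$, and examining the eigenvalue $x + (n-1)f(x)$ as $n \to \infty$, forces $f(x) \ge 0$ for $x \ge 0$, so $g \ge 0$ on $(0,\alpha)$.

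The main obstacle is to promote $g$ to an absolutely monotonic function that extends analytically to $D(0,\alpha)$. The strategy is to adapt the Schoenberg--Rudin machinery to the off-diagonal setting. For any $(t_1,\ldots,t_n) \in (-\sqrt{\alpha}, \sqrt{\alpha})^n$, the matrix $A = (t_i t_j)$ is a rank-one PSD matrix with entries in $(-\alpha,\alpha)$, and positivity of $f^*[A]$ translates, against any $y \in \mathbb{R}^n$, into
\begin{equation*}
\sum_i y_i^2\bigl(t_i^2 - f(t_i^2)\bigr) \;+\; \sum_{i,j} y_i y_j\, f(t_i t_j) \;\ge\; 0.
\end{equation*}
By varying $(t_i)$ and $(y_i)$ and invoking finite-difference/moment arguments in the spirit of Schoenberg--Rudin, one shows $f$ admits a power series $f(x) = \sum_{k \ge 1} d_k x^k$ with $d_k \ge 0$. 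Convergence on $(-\alpha,\alpha)$ follows from $|f(x)| \le |x|$, and setting $g(z) := \sum_{k \ge 0} d_{k+1} z^k$ yields an analytic function on $D(0,\alpha)$ that is absolutely monotonic on $(0,\alpha)$ and satisfies $|g(z)| \le g(|z|) \le 1$. When $\alpha = \infty$, $g$ is entire and bounded by $1$; by Liouville, $g$ is constant, giving $f(x) = cx$ with $c \in [0,1]$.
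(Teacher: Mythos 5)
Your sufficiency direction and your preliminary bounds ($|f(x)|\le |x|$ from the $2\times 2$ test, $f\ge 0$ on $(0,\alpha)$ from $xJ_n$ as $n\to\infty$, Liouville for $\alpha=\infty$) are all correct and match the paper. But the necessity direction has a genuine gap at exactly the step that carries all the difficulty: the sentence ``by varying $(t_i)$ and $(y_i)$ and invoking finite-difference/moment arguments in the spirit of Schoenberg--Rudin, one shows $f$ admits a power series with nonnegative coefficients'' is an assertion, not an argument, and it is not at all clear that the classical machinery adapts. The Schoenberg--Rudin/Horn-type arguments start from the hypothesis $\sum_{i,j} y_i y_j f(t_i t_j)\ge 0$; your hypothesis only gives that quantity plus the nonnegative perturbation $\sum_i y_i^2\bigl(t_i^2 - f(t_i^2)\bigr)$, which \emph{weakens} the conclusion rather than strengthening it. Since $f^*[A]=f[A]+D_A$ with $D_A\ge 0$, positivity of $f^*[A]$ does not imply positivity of $f[A]$, so the known theorem cannot be invoked directly. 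Worse, the fixed-dimension versions of those classical arguments provably fail in the off-diagonal setting: for $n\times n$ matrices, any $f$ with $|f(x)|\le |x|/(n-1)$ preserves positivity of $f^*[A]$ with no smoothness whatsoever (the paper's Corollary \ref{cor:complete_contrac}), so no finite-difference argument at fixed $n$ can produce derivatives of $f$. Any correct proof must exploit arbitrary dimension in a way that washes out the diagonal correction, and you have not supplied such a mechanism.

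The paper's route does exactly this. From the block matrix $M=\left(\begin{smallmatrix} A & B\\ B & B\end{smallmatrix}\right)\ge 0$ for $A\ge B\ge 0$ it extracts $f^*[A]\ge 2f[B]-f^*[B]$; applying this to $1_m\otimes A\ge 1_m\otimes B$ gives $1_m\otimes(f[A]-f[B])\ge -I_m\otimes(D_A+D_B)$, where the left side has eigenvalues scaling like $m$ while the right side stays bounded. Weyl's inequality, division by $m$, and $m\to\infty$ then yield $f[A]\ge f[B]$, and taking $B=0$ gives $f[A]\ge 0$ for all $A\ge 0$, at which point the classical characterization (Theorem \ref{thm:characterization_diag_thres}) applies as a black box. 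If you want to salvage your outline, you need to supply a tensorization or blow-up argument of this kind in place of the hand-waved adaptation.
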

The above result does come as a surprise. It formally demonstrates that, except in trivial cases, no guarantee can be given that applying a function to the off-diagonal elements of a matrix will preserve positive definiteness. There are thus no theoretical safeguards that thresholding procedures used in innumerable applications will maintain positive definiteness. 

The remainder of the paper is structured as follows. Section \ref{sec:rev:hard} reviews results that have been recently established for hard-thresholding. In Section \ref{sec:soft_thres}, a characterization of matrices preserving positive definiteness upon soft-thresholding is given. The characterization turns out to have a non-trivial relationship to graphs and the structure of zeros in the original matrix. Section \ref{sec:gen_thres} then studies the behavior of positive semidefinite matrices when an arbitrary function $f$ is applied to every element of the matrix. A review of previous results from the literature is first given. The results are then extended to include the case where the function is applied only to the off-diagonal elements of the matrix. A complete characterization of functions preserving positive definiteness in this modern setting is given. Finally, Section \ref{sec:eig_inequalities} gives sufficient conditions for a matrix $A$ and a function $f$ so that the matrix $f^*[A]$ remains positive definite. In particular, it is shown that the matrix $f^*[A]$ is guaranteed to be positive definite as long as the condition number of $A$ is smaller than an explicit bound.  
\ \\ \ \\
\emph{Notation: }
Throughout the paper, we shall make use of the following graph theoretic notation. Let $G=(V,E)$ be an undirected graph with $n \geq 1$ vertices $V = \{1, \dots, n\}$ and edge set $E$. Two vertices $a, b \in V$, $a \not= b$, are said to be \emph{adjacent} in $G$ if $(a,b) \in E$. A graph is \emph{simple} if it is undirected, and does not have multiple edges or self-loops. We will only work with finite simple graphs in this paper. 

We say that the graph $G'=(V', E')$ is a subgraph of  $G=(V, E)$,  denoted by  $G'\subset G$,  if $V'\subseteq V$ and  $E'\subset E$. In addition, if  $G'\subset G$  and  $E'=(V'\times V')\cap E$, we say that $G'$ is an \emph{induced subgraph} of  $G$. A graph  $G$  is called \emph{complete} if every pair of vertices are adjacent. A \emph{path} of length  $k\geq 1$  from vertex $i$  to  $j$  is a finite sequence of distinct vertices  $v_0=i,\ldots, v_k=j$  in $V$  and edges $(v_0,v_1), \ldots, (v_{k-1}, v_k)\in E$. A \emph{$k$-{cycle}} in  $G$  is a path of length  $k-1$ with an additional edge connecting the two end points. A graph $G$ is called \emph{connected} if for any pair of distinct vertices $i, j\in V$ there exists a path between them. 

A special class of graphs are \emph{trees}. These are connected graphs on $n$ vertices with exactly $n-1$ edges. A tree can also be defined as a connected graph with no cycle of length $n \geq 3$, or as a connected graph with a unique path between any two vertices. 

Graphs provide a useful way to encode patterns of zeros in symmetric matrices by letting $(i,j) \in E$ if and only if $a_{ij} \not= 0$. Denote by $\mathbb{P}_n^+$ the cone of $n \times n$ symmetric positive definite matrices,  and by $\mathbb{P}^+$ the cone of symmetric positive definite matrices (of any dimension). We shall write $A > 0$ whenever $A \in \mathbb{P}^+$ and $A > B$ if $A-B \in \mathbb{P}^+$. Similarly, we write $A \geq 0$ whenever $A$ is symmetric positive semidefinite, and $A \geq B$ if $A-B \geq 0$. We define the cone of symmetric positive definite matrices with zeros according to a given graph $G$ with $n$ vertices by 
\begin{equation}
\mathbb{P}_G^+ := \{A \in \mathbb{P}_n^+ : a_{ij} = 0 \textrm{ if } (i,j) \not\in E, i \not=j\}.
\end{equation}
Denoting the space of $n \times n$ matrices by $\mathbb{M}_n$, recall that a $(n_1+n_2) \times (n_1 + n_2)$ symmetric block matrix 
\[
M = \left(\begin{array}{cc}A & B \\ B^t & D\end{array}\right)
\]
where $A \in \mathbb{M}_{n_1 \times n_1}, B \in \mathbb{M}_{n_1 \times n_2}$, and $D \in \mathbb{M}_{n_2 \times n_2}$, is positive definite if and only if $D$ is positive definite and $S_1 = A - BD^{-1}B^t$ is positive definite. The matrix $S_1$ is called the \emph{Schur complement} of $D$ in $M$. Alternatively, $M$ is positive definite if and only if $A$ is positive definite and $S_2 = D-B^tA^{-1}B$ is positive definite. The matrix $S_2$ is called the \emph{Schur complement} of $A$ in $M$. Finally, for a symmetric matrix $A$, we shall denote by $\lambda_\textrm{min}(A)$ and $\lambda_\textrm{max}(A)$ its smallest and largest eigenvalues respectively. 

\section{Review of relevant results on hard-thresholding}\label{sec:rev:hard}

Algebraic properties of hard-thresholding have been studied in \cite{Guillot_Rajaratnam2012}. In particular, two types of hard-thresholding operations have been considered. Let $G$ be a graph with $n$ vertices. The graph $G$ induces a \emph{hard-thresholding} operation, mapping every symmetric $n \times n$ matrix $A = (a_{ij})$ to a matrix $A_G$ defined by 
\begin{equation}
(A_G)_{ij} = \left\{\begin{array}{ll} a_{ij} & \textrm{if } (i,j) \in E \textrm{ or } i=j \\ 0 & \textrm{otherwise}.\end{array}\right.
\end{equation}
We say that the matrix $A_G$ is obtained from $A$ by \emph{thresholding $A$ with respect to the graph $G$}. 

The following result from \cite{Guillot_Rajaratnam2012} fully characterizes the graphs preserving positive definiteness upon thresholding. 

\begin{theorem}[{\cite[Theorem 3.1]{Guillot_Rajaratnam2012}}]\label{thm:ht_complete}
Let $A$ be an arbitrary symmetric $n \times n$ matrix such that $A > 0$, i.e., $A \in \mathbb{P}_n^+$. Threshold $A$ with respect to a graph $G = (V,E)$ with the resulting thresholded matrix denoted by $A_G$. Then
\begin{equation}
A_G > 0 \textrm{ for all } A \in \mathbb{P}^+ \Leftrightarrow G = \bigcup_{i=1}^\tau G_i \qquad \textrm{ for some } \tau \in \mathbb{N},  
\end{equation}
where $G_i$, $i=1, \dots, \tau$, denote disconnected, complete components of $G$. 
\end{theorem}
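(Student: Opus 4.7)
The statement is an iff characterization, so I would prove the two directions separately.

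For the easy direction ($\Leftarrow$), suppose $G$ is the disjoint union of complete components $G_1, \dots, G_\tau$ on vertex partition $V_1, \dots, V_\tau$. After a single simultaneous permutation of rows and columns, $A_G$ becomes block diagonal with blocks $A[V_i]$, the principal submatrices of $A$ indexed by each $V_i$. Since principal submatrices inherit positive definiteness, $A_G > 0$ for every $A \in \mathbb{P}^+$. This is essentially a one-line observation.

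The substantive direction is the converse: if $G$ is not a disjoint union of cliques, I must produce an explicit $A \in \mathbb{P}^+$ with $A_G \not> 0$. My plan is first to perform the graph-theoretic reduction: some connected component $H$ of $G$ fails to be complete, so it contains a pair of non-adjacent vertices. Among all such pairs in $H$, pick $(u,v)$ minimizing the graph distance in $H$; because $u, v$ lie in the same component, $d(u,v) \geq 2$, and by the minimality of the choice the two endpoints of the middle edge of a shortest $u$-$v$ path are themselves non-adjacent and at distance $2$. Thus $G$ contains an induced path $P_3$, say on vertices $\{i,j,k\}$ with edges $(i,j), (j,k)$ and no edge $(i,k)$.

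Next I would exhibit the counterexample. Take $A$ to be the block-diagonal matrix on the $n$ vertices of $G$ whose $\{i,j,k\}$ block is
\[
B(\epsilon) = \begin{pmatrix} 1 & \epsilon & \epsilon \\ \epsilon & 1 & \epsilon \\ \epsilon & \epsilon & 1 \end{pmatrix}
\]
and whose complementary block is $I_{n-3}$. Since $B(\epsilon)$ has eigenvalues $1+2\epsilon$ and $1-\epsilon$ (twice), $A \in \mathbb{P}^+$ for every $\epsilon \in (0,1)$. Thresholding zeroes the $(i,k)$ entry (and nothing else, since the off-diagonal $\{1,\dots,n\}\setminus\{i,j,k\}$ entries of $A$ are already zero), leaving a $3\times 3$ principal submatrix of $A_G$ equal to
\[
\begin{pmatrix} 1 & \epsilon & 0 \\ \epsilon & 1 & \epsilon \\ 0 & \epsilon & 1 \end{pmatrix},
\]
whose determinant is $1 - 2\epsilon^2$. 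Choosing any $\epsilon \in (1/\sqrt{2}, 1)$ makes this determinant negative, so $A_G \not> 0$ while $A > 0$.

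The only mildly delicate step is the graph-theoretic reduction to an induced $P_3$; everything else is direct verification. The rest of the argument is, as expected, a localization: any obstruction to preservation under thresholding must already appear on three vertices, and the ``all-$\epsilon$'' symmetric matrix realizes it in the tightest possible way.
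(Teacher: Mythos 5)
This theorem is quoted from \cite{Guillot_Rajaratnam2012} and the paper gives no proof of it, so there is nothing internal to compare against; judged on its own, your argument is correct and is the natural one. The backward direction (block-diagonal structure plus inheritance of positive definiteness by principal submatrices) is fine, and the forward direction correctly localizes the obstruction to an induced path on three vertices and kills it with the all-$\epsilon$ matrix, whose thresholded $3\times 3$ principal block has determinant $1-2\epsilon^2<0$ for $\epsilon\in(1/\sqrt{2},1)$ while $B(\epsilon)>0$ for all $\epsilon\in(0,1)$. One phrasing slip worth fixing: ``the two endpoints of the middle edge \dots are non-adjacent'' is not what you mean (endpoints of an edge are adjacent by definition); the minimality of $d(u,v)$ over non-adjacent pairs simply forces $d(u,v)=2$, so $u$, the intermediate vertex, and $v$ already form the induced $P_3$ --- equivalently, the first three vertices of any shortest path between non-adjacent vertices do the job.
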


The above theorem asserts that a positive definite matrix $A$ is guaranteed to retain positive definiteness upon thresholding with respect to a graph $G$ only in the trivial case when the thresholded matrix can be reorganized as a block diagonal matrix where, within each block, there is no thresholding. This result can be further generalized to matrices in $\mathbb{P}_G^+$ which are thresholded with respect to a subgraph $H$ of $G$. The following theorem shows that thresholding matrices from this class yields essentially the same results as in the complete graph case. 

\begin{theorem}[{\cite[Theorem 3.3]{Guillot_Rajaratnam2012}}]\label{Th_Hard_Thres_General}
Let $G = (V,E)$ be an undirected graph and let $H = (V,E')$ be a subgraph of $G$ i.e., $E' \subset E$. Then $A_H > 0$ for every $A \in \mathbb{P}_G^+$ if and only if $H = G_1 \cup \dots \cup G_k$ where $G_1, \dots, G_k$ are disconnected induced subgraphs of $G$. 
\end{theorem}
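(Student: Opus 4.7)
I would prove the two directions separately: $(\Leftarrow)$ by a block-diagonal argument, and $(\Rightarrow)$ by contrapositive with a witness construction built from an induced cycle in $G$ and an appropriately signed adjacency matrix.

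For $(\Leftarrow)$, suppose $H = G_1 \cup \cdots \cup G_k$ with the $G_i$ vertex-disjoint induced subgraphs of $G$ whose vertex sets $V_1, \ldots, V_k$ partition $V$. For $A \in \mathbb{P}_G^+$, any position $(i,j) \notin E'$ with $i, j \in V_l$ also satisfies $(i,j) \notin E$ by the induced condition, so $a_{ij} = 0$ already. Thresholding therefore zeroes only inter-block entries, so $A_H$ is block-diagonal with $l$-th block equal to the principal submatrix of $A$ indexed by $V_l$. Each such block is positive definite as a principal submatrix of the positive definite $A$, hence so is $A_H$.

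For $(\Rightarrow)$, suppose $H$ is not of the stated form. Then some pair $u, v$ lies in the same connected component of $H$ with $(u,v) \in E \setminus E'$. A shortest $H$-path $u = w_0, w_1, \ldots, w_m = v$ has $m \ge 2$, and together with $(u,v)$ forms a cycle $C$ in $G$ of length $\ell = m+1 \ge 3$ whose only edge outside $E'$ is $(w_0, w_m)$. I would set $A = I_n + c\,M$, where $M$ is zero outside the edges of $C$ with $M_{w_i w_{i+1}} = +1$ for $0 \le i < m$ and $M_{w_0 w_m} = (-1)^{\ell+1}$; the gauge-invariant cycle product of $M$ is thus $(-1)^{\ell+1}$. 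This parity keeps $-2$ out of the spectrum of $M$, and a discrete Fourier computation yields $\lambda_\textrm{min}(M) = -2\cos(\pi/\ell)$. The thresholded matrix $A_H = I_n + c\,M'$ has $M'$ equal to $M$ with the $(w_0, w_m)$ entry zeroed, a signed path adjacency which, via a diagonal $\pm 1$ conjugation, is unitarily similar to the unsigned path adjacency, so $\lambda_\textrm{min}(M') = -2\cos(\pi/(\ell+1))$. Since $\cos(\pi/(\ell+1)) > \cos(\pi/\ell)$, the interval $\bigl[\tfrac{1}{2\cos(\pi/(\ell+1))},\ \tfrac{1}{2\cos(\pi/\ell)}\bigr)$ is nonempty; picking $c$ inside it makes $A > 0$ while $A_H$ acquires a negative eigenvalue. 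Since $A$ is nonzero off-diagonal only on edges of $C \subseteq E$, we have $A \in \mathbb{P}_G^+$, completing the contrapositive.

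The main obstacle is the choice of sign on the closing edge of $C$. With a naive all-$+1$ signing, $M$ would acquire $-2$ as an eigenvalue whenever $\ell$ is even, collapsing the window of admissible $c$. Twisting the closing edge by $(-1)^{\ell+1}$ flips the cycle parity to exactly the one that guarantees $\lambda_\textrm{min}(M) = -2\cos(\pi/\ell) > -2$ in every case; once this is secured, the remaining comparison with the path spectrum reduces to monotonicity of cosine, and the extension of the witness from $C$ to all of $V$ by the identity matrix is routine.
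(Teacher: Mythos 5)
Your proof is correct. Note that the paper itself does not prove this statement: Theorem \ref{Th_Hard_Thres_General} is quoted in the review section from \cite[Theorem 3.3]{Guillot_Rajaratnam2012}, so there is no in-paper argument to compare against; your write-up stands as a self-contained proof. The $(\Leftarrow)$ direction is the expected block-diagonal/principal-submatrix argument. The $(\Rightarrow)$ direction is a clean spectral witness: the reduction to a cycle $C$ whose sole edge outside $E'$ closes a shortest $H$-path is the right combinatorial core, and the sign twist $(-1)^{\ell+1}$ on the closing edge is exactly what is needed to force $\lambda_{\min}(M)=-2\cos(\pi/\ell)>-2$ for both parities of $\ell$; the comparison with the path spectrum $-2\cos(\pi/(\ell+1))$ then opens a nonempty window for $c$, and since $\ell\geq 3$ gives $\cos(\pi/\ell)\geq 1/2$, the endpoints are finite and positive. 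Two small points you should make explicit. First, the opening claim of the contrapositive --- that failure of the decomposition yields $u,v$ in the same $H$-component with $(u,v)\in E\setminus E'$ --- needs the one-line observation that if \emph{any} partition of $V$ into pieces that are $H$-disconnected and $G$-induced exists, then the partition into connected components of $H$ already works (any valid partition coarsens it, and the induced condition passes down to components); otherwise the existence of such $u,v$ is not immediate. Second, at the left endpoint of your interval $A_H$ is merely singular rather than having a negative eigenvalue, so take $c$ in the open interval (or simply note that singularity already violates $A_H>0$). Neither issue affects the validity of the argument.
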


Theorems \ref{thm:ht_complete} and \ref{Th_Hard_Thres_General} treat the case of thresholding elements regardless of their magnitude. In practical applications however, in order to induce sparsity, hard-thresholding is often performed on the smaller elements of the positive definite matrix. The following result shows that only matrices with zeros according to a tree are guaranteed to retain positive definiteness when hard-thresholded at a given level $\epsilon > 0$.  

\begin{definition}
The matrix $B$ is said to be the \emph{hard-thresholded version of $A$ at level $\epsilon$} if $b_{ij} = a_{ij}$ when $|a_{ij}| > \epsilon$ or $i=j$, and $b_{ij}=0$ otherwise. 
\end{definition}

\begin{theorem}[{\cite[Theorem 3.6]{Guillot_Rajaratnam2012}}]\label{Th_hard_thresholding_level_ep}
Let $G$ be a connected undirected graph. The following are equivalent: 
\begin{enumerate}
\item There exists $\epsilon > 0$ such that for every $A \in \mathbb{P}_G^+$, the hard-thresholded version of $A$ at level $\epsilon$ is positive definite; 
\item For every $\epsilon > 0$ and every $A \in \mathbb{P}_G^+$, the hard-thresholded version of $A$ at level $\epsilon$ is positive definite; 
\item $G$ is a tree. 
\end{enumerate}
\end{theorem}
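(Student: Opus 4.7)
The implication (2)$\Rightarrow$(1) is immediate. For (3)$\Rightarrow$(2), fix a tree $G$, a matrix $A \in \mathbb{P}_G^+$, and $\epsilon > 0$, and let $H$ be the subgraph of $G$ consisting of those edges $(i,j) \in E$ with $|a_{ij}| > \epsilon$. The hard-thresholded matrix at level $\epsilon$ is then precisely $A_H$ in the notation of Section~\ref{sec:rev:hard}. The key observation is that in a tree each connected component of any subgraph $H \subset G$ is automatically an induced subgraph of $G$: otherwise two vertices of such a component would be joined both by a path inside $H \subset G$ and by an edge of $G \setminus H$, closing a cycle in $G$. Writing $H = G_1 \cup \cdots \cup G_r$ as the disjoint union of these (induced) components and invoking Theorem~\ref{Th_Hard_Thres_General} gives $A_H > 0$.

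For (1)$\Rightarrow$(3) I argue by contrapositive. Suppose $G$ is connected but not a tree, so $G$ contains a cycle $C = v_1 v_2 \cdots v_k v_1$ of length $k \geq 3$. Let $p_k := 1/\bigl(2\cos(\pi/(k+1))\bigr)$ be the critical value at which the $k \times k$ tridiagonal Toeplitz ``path'' matrix $T_k(p)$ with $1$'s on the diagonal and $p$'s on the two off-diagonals is singular; its smallest eigenvalue is $1 - p/p_k$, with eigenvector $u_i = \sin(ik\pi/(k+1))$, and a direct computation gives $u_1 u_k = (-1)^{k+1} \sin^2(\pi/(k+1)) \neq 0$. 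Fix $\epsilon_0 \in (0,p_k)$ sufficiently small and construct $A_0 \in \mathbb{P}_G^+$ block-diagonally: the identity on $V \setminus \{v_1,\ldots,v_k\}$, and on the cycle block the symmetric matrix with $1$ on the diagonal, $p := p_k + \delta$ on the path entries $(v_i,v_{i+1})$, $q := \sigma \epsilon_0$ on the closing entry $(v_1,v_k)$ with $\sigma := \sgn(u_1 u_k)$, and $0$ on every other entry (in particular on any $G$-chords of $C$).

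Standard first-order perturbation theory then shows that the near-zero eigenvalue $-\delta/p_k$ of $T_k(p_k+\delta)$ is shifted by $q\cdot 2 u_1 u_k/\|u\|^2 + O(q^2) = c\,\epsilon_0 + O(\epsilon_0^2)$ where $c := 2|u_1 u_k|/\|u\|^2 > 0$, while the remaining eigenvalues of $T_k(p_k)$ stay bounded away from $0$. Hence, for $\epsilon_0 > 0$ small and $\delta > 0$ chosen much smaller than $\epsilon_0$, the perturbed cycle block is positive definite and so $A_0 \in \mathbb{P}_G^+$. On the other hand, hard-thresholding at level $\epsilon_0$ erases only the closing entry ($|q| = \epsilon_0$) while preserving the path entries ($|p| > p_k > \epsilon_0$); the cycle block of the thresholded matrix is therefore $T_k(p)$ itself, whose smallest eigenvalue $-\delta/p_k$ is negative. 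Thus $\eta_{\epsilon_0}(A_0)$ is not positive definite. For an arbitrary $\epsilon > 0$, the rescaled matrix $A_\epsilon := (\epsilon/\epsilon_0) A_0$ lies in $\mathbb{P}_G^+$ since $\mathbb{P}_G^+$ is a cone, and it satisfies $\eta_\epsilon(A_\epsilon) = (\epsilon/\epsilon_0)\,\eta_{\epsilon_0}(A_0)$ by homogeneity of hard-thresholding, so it is not positive definite either; this contradicts~(1).

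The main obstacle is the simultaneous verification that the cycle-block construction is positive definite \emph{and} that, once the closing edge is thresholded away, the remaining tridiagonal ``path'' block drops below the positivity threshold. The plan above uses the explicit spectrum of $T_k(p)$ together with a first-order perturbation estimate valid uniformly in $k \geq 3$; as a cleaner alternative, one can cofactor-expand the cycle-block determinant along the row indexed by $v_1$ to obtain an affine function of $q$ whose value at $q=0$ is $\det T_k(p_k+\delta) < 0$ and whose slope in $q$ is, up to sign, $2 \det T_{k-2}(p) > 0$ for $\delta$ small, so that an appropriate choice of $\sigma\epsilon_0$ makes the full determinant positive while leading principal minors are verified separately. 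Everything else (zero-padding outside the cycle, and the homogeneity argument for large $\epsilon$) is routine bookkeeping.
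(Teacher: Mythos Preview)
The paper does not prove Theorem~\ref{Th_hard_thresholding_level_ep} itself; it is quoted from \cite{Guillot_Rajaratnam2012} as background, so there is no in-paper proof to compare against directly. Your (3)$\Rightarrow$(2) argument via Theorem~\ref{Th_Hard_Thres_General} is correct and coincides with the route the paper advertises in the remark following Theorem~\ref{thm:pd_G_tree}. Your (1)$\Rightarrow$(3) construction is also sound: for each fixed cycle length $k$ the spectral gap above the bottom eigenvalue of $T_k(p_k)$ is strictly positive, so the $O(q^2)$ remainder in the perturbation expansion is controlled (the implied constant depends on $k$, but $k$ is fixed once $G$ is given), your sign choice $\sigma=\sgn(u_1u_k)$ makes the first-order shift positive, and thresholding away the $(1,k)$ entry restores $T_k(p_k+\delta)$ with its negative eigenvalue $-\delta/p_k$. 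The homogeneity step for general $\epsilon$ is fine, though note that in this paper the symbol $\eta_\epsilon$ is reserved for soft-thresholding. By way of comparison, the paper's proof of the analogous soft-thresholding result (Theorem~\ref{Th_Princ_Soft_Thres}) proceeds very differently, via an inductive Schur-complement construction starting from an explicit $3\times 3$ example; your direct spectral argument is simpler and exploits the fact that the hard-thresholded cycle block is an explicitly diagonalizable tridiagonal Toeplitz matrix, a luxury unavailable in the soft case.

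Your ``cleaner alternative,'' however, is misstated. The cycle-block determinant is \emph{quadratic} in $q$, not affine, since $q$ occupies both the $(1,k)$ and $(k,1)$ positions: one computes
\[
\det B \;=\; \det T_k(p) \;+\; 2(-1)^{k+1}p^{\,k-1}\,q \;-\; \det T_{k-2}(p)\,q^2.
\]
The linear coefficient is $2(-1)^{k+1}p^{\,k-1}$, while $\det T_{k-2}(p)$ appears (with a sign) as the quadratic coefficient, not the slope. The idea can be repaired---with $q=\sigma\epsilon_0$ the linear term is $2p^{\,k-1}\epsilon_0>0$, the quadratic term is $O(\epsilon_0^2)$, and the leading principal minors of $B$ of size $j<k$ equal $\det T_j(p)>0$ because $p_k+\delta<p_{k-1}<\cdots$---but as written those two claims are incorrect. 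Since this is only an aside to your main perturbation argument, the overall proof stands.
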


The result above demonstrates that hard-thresholding positive definite matrices at a given level $\epsilon$ can also quickly lead to a loss of positive definiteness, though it is not as severe as when thresholding with respect to a graph. Recall that hard-thresholding a matrix $A$ at level $\epsilon$ is equivalent to applying the hard-thresholding function given in \eqref{eqn:hard_thres} to every off-diagonal element of $A$. It is thus natural to replace the hard-thresholding function by other functions to see if positive definiteness can be retained. A popular alternative is the soft-thresholding function (see \eqref{eqn:soft_thres}, \eqref{eqn:soft_thres_matrix}, and Figure \ref{fig:hard_soft}). The next section is devoted to studying the algebraic properties of soft-thresholded positive definite matrices. We conclude this section by noting that Theorem \ref{Th_hard_thresholding_level_ep} also yields a characterization of trees via thresholding  matrices. 

\begin{figure}
\centering
  \subfloat[Hard-thresholding]{
	\begin{minipage}[c][0.8\width]{
	   0.4\textwidth}
	   \centering
	   \includegraphics[width=6.5cm]{./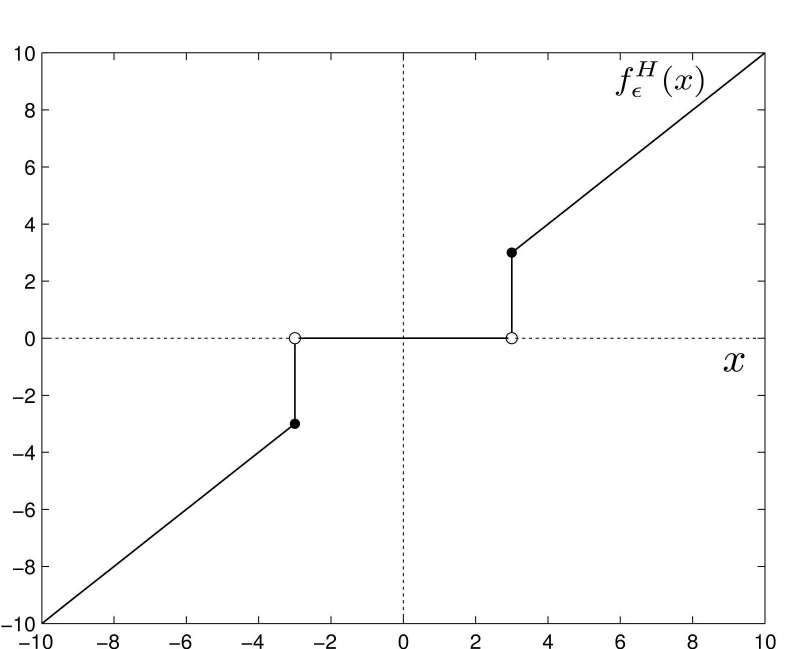}
	   \label{fig:hard_thres}
	\end{minipage}} 	
  \subfloat[Soft-thresholding]{
	\begin{minipage}[c][0.8\width]{
	   0.4\textwidth}
	   \centering
	   \includegraphics[width=6.5cm]{./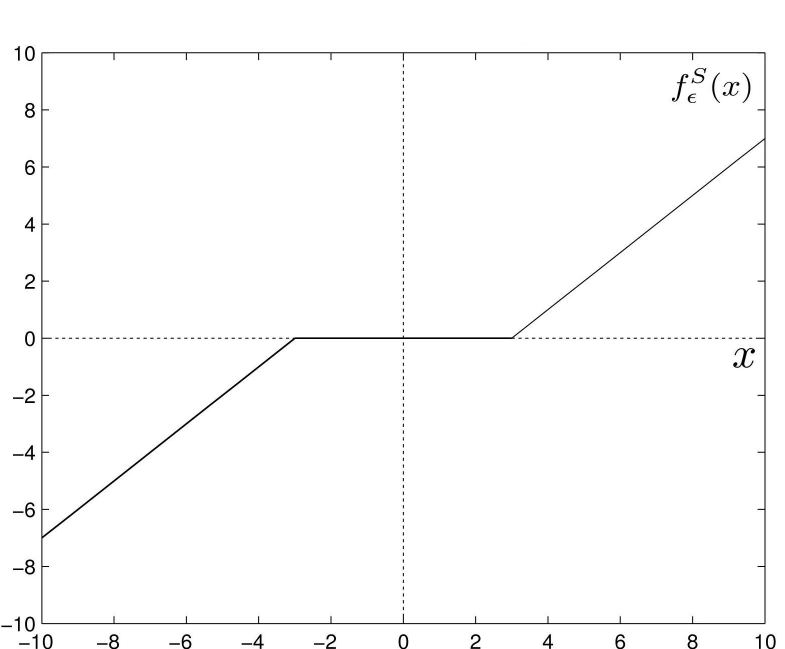}
	   \label{fig:soft_thres}
	\end{minipage}}
\caption{Illustration of the hard and soft-thresholding functions with $\epsilon = 3$}
\label{fig:hard_soft}
\end{figure}

\section{Soft-thresholding}\label{sec:soft_thres}
We now proceed to the more intricate task of characterizing the graphs $G$ for which every matrix $A \in \mathbb{P}_G^+$ retains positive definiteness when soft-thresholded at a given level $\epsilon > 0$. As soft-thresholding is a continuous function as opposed to the hard-thresholding function, it would seem that soft-thresholding may have better properties in terms of retaining positive definiteness. 

\begin{definition}
For a matrix $A = (a_{ij})$ and $\epsilon > 0$, the \emph{soft-thresholded} version of $A$ at level $\epsilon$ is given by:  
\begin{equation}
(\eta_\epsilon(A))_{ij} = \left\{\begin{array}{cc}\sgn(a_{ij}) (|a_{ij}|-\epsilon)_+ & \textrm{ if } i \not= j \\
a_{ij} & \textrm{ otherwise}\end{array}\right..
\end{equation}
\end{definition}

\begin{theorem}\label{Th_Princ_Soft_Thres}
Let $G = (V,E)$ be a connected undirected graph. Then the following are equivalent: 
\begin{enumerate}
\item There exists $\epsilon > 0$ such that for every $A \in \mathbb{P}_G^+$, we have $\eta_\epsilon(A) > 0$; 
\item For every $\epsilon > 0$ and every $A \in \mathbb{P}_G^+$, we have $\eta_\epsilon(A) > 0$;
\item $G$ is a tree. 
\end{enumerate}  
\end{theorem}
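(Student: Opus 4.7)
The implication $(2)\Rightarrow(1)$ is immediate, since any single $\epsilon>0$ then witnesses (1). I plan to close the loop via $(3)\Rightarrow(2)$ and $(1)\Rightarrow(3)$.

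For $(3)\Rightarrow(2)$, I would induct on $n=|V|$, the base case $n\le 2$ being direct. For the inductive step, pick a leaf $n$ of the tree $G$ with unique neighbor $k$. Given $A\in\mathbb{P}_G^+$, the $n$th row and column of $A$ have $a_{nk}$ as their only nonzero off-diagonal entry, so the Schur complement of $a_{nn}>0$ in $A$ equals $A_0:=A'-(a_{nk}^2/a_{nn})\,e_ke_k^t$, where $A'$ is the principal submatrix of $A$ on $V\setminus\{n\}$. Since subtracting a multiple of $e_ke_k^t$ does not alter any off-diagonal entry, $A_0\in\mathbb{P}_{G\setminus\{n\}}^+$, the cone of a tree on $n-1$ vertices, and the inductive hypothesis yields $\eta_\epsilon(A_0)>0$. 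Because $\eta_\epsilon$ preserves diagonals, it commutes with the subtraction of a diagonal rank-one term, giving $\eta_\epsilon(A_0)=\eta_\epsilon(A')-(a_{nk}^2/a_{nn})\,e_ke_k^t$. Writing $\tilde a_{nk}:=\eta_\epsilon(A)_{nk}$, the Schur complement of $a_{nn}$ in $\eta_\epsilon(A)$ equals $\eta_\epsilon(A')-(\tilde a_{nk}^2/a_{nn})\,e_ke_k^t$, and since $\tilde a_{nk}^2\le a_{nk}^2$, this matrix dominates $\eta_\epsilon(A_0)>0$ in the Loewner order. Combined with $a_{nn}>0$, this gives $\eta_\epsilon(A)>0$.

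For $(1)\Rightarrow(3)$, I would argue the contrapositive: if $G$ is connected and not a tree, then $G$ contains an induced (chordless) cycle $C$ of some length $k\ge 3$. A useful preliminary observation is the scale-equivariance $\eta_{s\epsilon}(sA)=s\,\eta_\epsilon(A)$ for $s>0$, so a single witness $(A_0,\epsilon_0)$ of failure produces, via $(sA_0,s\epsilon_0)$, a witness at any prescribed threshold. It therefore suffices to produce one pair $(A,\epsilon)$ with $A\in\mathbb{P}_G^+$ and $\eta_\epsilon(A)\not>0$. My plan is to set every off-diagonal entry indexed by an edge of $G\setminus C$ equal to zero and concentrate all nonzero off-diagonal mass on $C$; this stays in $\mathbb{P}_G^+$, which only forces zeros at non-edges. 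The task then becomes to choose magnitudes and signs along $C$ so that $A$ is strictly positive definite but the perturbation $\Delta:=\eta_\epsilon(A)-A$ drives the smallest eigenvalue below zero.

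The main obstacle is exhibiting the cycle counterexample. On $C$ the perturbation satisfies $\Delta_{ij}=-\sgn(a_{ij})\,\min(|a_{ij}|,\epsilon)$ on cycle edges and is zero elsewhere, so for a unit eigenvector $v$ associated to $\lambda_\textrm{min}(A)$,
$v^t\Delta v=-2\sum_{(i,j)\in C,\,i<j}\sgn(a_{ij})\,\min(|a_{ij}|,\epsilon)\,v_iv_j.$
The construction must achieve simultaneously that (i) $\lambda_\textrm{min}(A)$ is arbitrarily small but positive, with critical eigenvector $v$, and (ii) the signs along the cycle satisfy $\sum_{(i,j)\in C,\,i<j}\sgn(a_{ij})\,v_iv_j>0$, so that $v^t\Delta v<0$ and hence $v^t\eta_\epsilon(A)v<0$. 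The cyclic structure is essential: the $(3)\Rightarrow(2)$ argument above shows that on a tree the Schur-complement recursion forces the critical direction to be ``sign-anti-aligned'' with the off-diagonal pattern, so no such $v$ exists on a tree. The technical work is to verify that such $A$ and $v$ do exist on an induced cycle of length $\ge 3$; once built, zero-padding at edges of $G\setminus C$ transplants the counterexample into $\mathbb{P}_G^+$ and, together with the scaling remark, refutes (1).
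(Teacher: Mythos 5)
Your $(3)\Rightarrow(2)$ argument is correct and coincides with the paper's: the paper proves the stronger statement (Theorem \ref{thm:pd_G_tree}) that for a tree $G$ \emph{every} $f$ with $|f(x)|\leq |x|$ maps $\mathbb{P}_G^+$ into itself, by exactly your leaf-removal Schur-complement induction, the key point in both cases being that shrinking $|a_{nk}|$ only adds a nonnegative diagonal term to the Schur complement. The scale-equivariance $\eta_{s\epsilon}(sA)=s\eta_\epsilon(A)$ and the reduction to a single cycle by zero-padding are also how the paper proceeds.

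The genuine gap is in $(1)\Rightarrow(3)$. You have correctly reduced the problem to ``exhibit, for each $k\geq 3$, a matrix $A\in\mathbb{P}_{C_k}^+$ with $\eta_\epsilon(A)\not>0$,'' but you do not exhibit one, and this construction is essentially the entire technical content of the paper's proof. Your plan --- make $\lambda_{\min}(A)$ small and positive with eigenvector $v$ satisfying $\sum_{(i,j)\in C,\ i<j}\sgn(a_{ij})\min(|a_{ij}|,\epsilon)\,v_iv_j>0$ --- is a reasonable target, but you give no argument that both conditions can be met simultaneously on a cycle: soft-thresholding shrinks every off-diagonal entry toward zero, and a priori the critical eigenvector of a near-singular $A$ could be aligned so that shrinkage raises rather than lowers $v^t A v$. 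The sign-invariant heuristic (``on a tree the critical direction is sign-anti-aligned'') identifies the right structural obstruction --- on a tree every edge sign pattern is removable by a diagonal $\pm 1$ conjugation, while on a cycle the product of signs is an invariant --- but it does not produce the matrix. The paper's construction is delicate: an explicit $3\times 3$ seed $A_3$ with two auxiliary properties, then an induction on cycle length via a bordered matrix $A_{n+1}$ whose Schur complement with respect to the new corner entry $\alpha$ reproduces $\eta_\epsilon(A_n)$ after thresholding, with the parameters $a_{n+1}$, $b$, $\alpha=b^3$ tuned so that three further positivity properties (needed to continue the induction) hold asymptotically as $b\to\infty$. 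Without either that construction or a self-contained existence proof for your pair $(A,v)$ on $C_k$, the implication $(1)\Rightarrow(3)$ is not established.
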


\begin{remark}
Regardless of the continuity of the soft-thresholding function $f_\epsilon^S$, Theorem \ref{Th_Princ_Soft_Thres} demonstrates that soft-thresholding has the same effect as hard-thresholding when it comes to retaining positive definiteness (see Theorem \ref{Th_hard_thresholding_level_ep}). Theorem \ref{Th_Princ_Soft_Thres} also gives yet another characterization of trees.
\end{remark}

\begin{remark}
The proof of Theorem 3.2 given below for soft-thresholding is more challenging as compared to the proof of Theorem 2.4 for hard-thresholding. In [2], an explicit example of a matrix $A \in \mathbb{P}^+_{C_n}$ losing positive definiteness upon hard-thresholding is constructed for all $n \geq 3$. A direct construction of a matrix losing positive definiteness when soft-thresholded is elusive. The proof below proceeds by induction:  we start with a matrix $A_3 \in \mathbb{P}^+_{C_3}$ losing positive definiteness when soft-thresholded at level $\epsilon = 0.1$. First, the matrix $A_3$ is determined numerically. Thereafter, a matrix $A_n \in \mathbb{P}^+_{C_n}$ losing positive definiteness when soft-thresholded at the same level is then constructed inductively by exploiting properties of Schur complements. 
\end{remark}

\begin{proof}[Proof of Theorem \ref{Th_Princ_Soft_Thres}]
($1 \Rightarrow 3$) We shall prove the contrapositive form. Let $C_n$ denote the cycle graph with $n$ vertices. Recall that a tree is a graph without cycle of length $n \geq 3$. Thus, if $G$ is not a tree, then it contains a cycle of length greater or equal than $3$. Therefore, to prove this part of the result, it is sufficient to construct, for every $n \geq 3$, a positive definite matrix $A_n \in \mathbb{P}_{C_n}^+$ which does not retain positive definiteness when soft-thresholded at the given level $\epsilon > 0$. We will begin by providing such examples of matrices for a fixed value of $\epsilon = 0.1$. We will then show how matrices with the same properties can be built for arbitrary values of $\epsilon > 0$. 

The following matrix  
\begin{equation}
A_3 := \begin{pmatrix}
8.3 & 1.1 & 1.1 \\
1.1 & 0.3 & 0.1 \\
1.1 & 0.1 & 0.2
\end{pmatrix}
\end{equation}
provides an example for $n=3$, with threshold level $\epsilon=0.1$. Also, notice that 1) the matrix $\widetilde{A}_3$ where 
\begin{equation}
\widetilde{A}_3 := \begin{pmatrix}
8.3 & 1.1 & 0 \\
1.1 & 0.3 & 0.1 \\
0 & 0.1 & 0.2
\end{pmatrix}, 
\end{equation}
which is $A_3$ with the $(1,3)$ and $(3,1)$ elements set to zero, is positive definite, and 2) the matrix $A$ stays positive definite when only the $(1,3)$ and $(3,1)$ elements are soft-thresholded at level $\epsilon = 0.1$. We will construct a similar matrix $A_n$ for $n \geq 4$ inductively. Properties 1) and 2) will be important to perform the induction step. 

Indeed, assume that, for some $n \geq 3$, there exists a matrix $A_n \in \mathbb{P}_{C_n}^+$ which loses positive definiteness when soft-thresholded at level $\epsilon = 0.1$. Let us assume also that the matrix $\widetilde{A}_n$ obtained from $A_n$ by setting the $(1,n)$ and $(n,1)$ elements to $0$ is positive definite and that the matrix obtained from $A_n$ by soft-thresholding only the $(1,n)$ and $(n,1)$ elements at level $\epsilon$ is positive definite. These properties are satisfied for $n=3$ by the matrix $A_3$ given above. We will build a matrix $A_{n+1} \in \mathbb{P}_{C_{n+1}}^+$ satisfying the same properties. Let $a_n$ denote the $(1,n)$ element of $A_n$. For every real number $r$, let $r_\epsilon := \sgn(r)(|r|-\epsilon)_+$ denote the value of $r$ soft-thresholded at level $\epsilon$. To simplify the notation, let us denote by $a_{n, \epsilon}$ the value of $(a_n)_\epsilon$. Now consider the matrix 
\begin{equation}
A_{n+1} := \left(\begin{array}{cccccc}
 & & & & & \mc{a_{n+1}} \\
 & & & & & \mc{0} \\
 & & \widetilde{A}_{n} + D_n & & & \mc{\vdots} \\
 & & & & & \mc{0} \\
 & & & & & \mc{b} \\
\cline{1-6}
a_{n+1} &0 & \dots & 0& b & \mc{\alpha}
\end{array}\right)
\end{equation}
where $D_n$ is a diagonal matrix with positive diagonal $((a_{n+1,\epsilon})^2/\alpha, 0,\dots,0, (b_\epsilon)^2/\alpha)$. Notice that $A_{n+1}$ has zeros according to ${C_{n+1}}$. We will prove that $a_{n+1}, b, \alpha$ can be chosen so that $A_{n+1}$ satisfies the required properties. 

Let us first choose the value of $a_{n+1}$ as a function of $\alpha$ and $b$ in such a way that
\begin{equation}\label{eqn:defn_a} 
\frac{-a_{n+1,\epsilon}\ b_\epsilon}{\alpha} = a_{n,\epsilon}.
\end{equation} 
This is always possible if $|b| > \epsilon$. Indeed, if $|b| > \epsilon$, then $a_{n+1}$ satisfies equation (\ref{eqn:defn_a}) for 
\begin{equation}
a_{n+1} := -\alpha \frac{a_{n, \epsilon} }{b_{\epsilon}} + s \epsilon
\end{equation}
where $s = \sgn(-\alpha a_{n, \epsilon}/b_{\epsilon})$. 

We claim that we can choose $\alpha > 0$ and $b > \epsilon $ such that: 
\begin{enumerate}
\item $A_{n+1}$ is positive definite; 
\item $\widetilde{A}_{n+1}$ is positive definite; 
\item $A_{n+1}$ is not positive definite when soft-thresholded at level $\epsilon$, i.e., $\eta_\epsilon(A_{n+1}) \not> 0$;
\item $A_{n+1}$ is positive definite when only its $(1,n+1)$ and $(n+1,1)$ elements are soft-thresholded at level $\epsilon$. 
\end{enumerate}
Conditions (1) and (3) are the two conditions needed to prove that the matrix $A_{n+1}$ satisfies the theorem. Conditions (2) and (4) are required in the induction step. 

First, note that the matrix $A_{n+1}$ has been constructed in such a way that the Schur complement of $\alpha$ in $\eta_\epsilon(A_{n+1})$ is equal to $\eta_\epsilon(A_n)$. Therefore, by the induction hypothesis, $\eta_\epsilon(A_{n+1})$ is not positive definite for any value of $|b| > \epsilon$ and $\alpha > 0$. This proves (3). 

Since $\alpha > 0$, to prove properties (1), (2) and (4), we only need to study the Schur complement of $\alpha $ in the three matrices: $A_{n+1}$, $\widetilde{A}_{n+1}$ and in the matrix obtained from $A_{n+1}$ by soft-thresholding the $(1,n+1)$ and $(n+1,1)$ elements. We will prove that properties (1), (2) and (4) hold true asymptotically as $\alpha, b \rightarrow \infty$. Therefore, the result will follow by choosing appropriately large values of $\alpha$ and $b$.

The Schur complement of $\alpha$ in  $A_{n+1}$ is given by
\begin{equation}\label{eqn:schur_compl}
\widetilde{A}_n + D_n - \left(\begin{array}{ccc}
\frac{a_{n+1}^2}{\alpha} & \dots & \frac{a_{n+1} b}{\alpha} \\
\vdots & \ddots & \vdots \\
\frac{a_{n+1} b}{\alpha} & \dots & \frac{b^2}{\alpha} 
\end{array}\right) = \widetilde{A}_n  + \left(\begin{array}{ccc}
\frac{(a_{n+1,\epsilon})^2-a_{n+1}^2}{\alpha} & \dots & -\frac{a_{n+1} b}{\alpha} \\
\vdots & \ddots & \vdots \\
-\frac{a_{n+1} b}{\alpha} & \dots & \frac{b_\epsilon^2-b^2}{\alpha} 
\end{array}\right)
\end{equation}
where the dots in the above matrices represent zeros. Let us take $\alpha = b^3$. Since $a_{n+1}$ and $\alpha$ depend on the value of $b$ and since $\epsilon$ is fixed, $b$ becomes the only ``free'' parameter. We will prove that properties (1), (2) and (4) hold for large values of $b$. We begin by studying the limiting behavior of different quantities related to the Schur complement (\ref{eqn:schur_compl}). We will show that 
\begin{align}
\frac{a_{n+1,\epsilon}}{\alpha} &\rightarrow 0 \textrm{ as } b \rightarrow \infty \label{eqn:limit1}\\ 
\frac{a_{n+1}^2}{\alpha} - \frac{(a_{n+1, \epsilon})^2}{\alpha} &\rightarrow 0 \textrm{ as } b \rightarrow \infty \label{eqn:limit2}\\
\frac{a_{n+1} b}{\alpha} &\rightarrow -a_{n, \epsilon}  \textrm{ as } b \rightarrow \infty \label{eqn:limit3}.  
\end{align}
Equation (\ref{eqn:limit1}), follows easily by equation (\ref{eqn:defn_a}), since  
\begin{equation}
a_{n+1,\epsilon}/\alpha = -a_{n,\epsilon}/b_\epsilon \rightarrow 0 \textrm{ as } b \rightarrow \infty. 
\end{equation}
Now to prove \eqref{eqn:limit2}, recall that, by construction, $a_{n+1} = a_{n+1,\epsilon} \pm \epsilon$ where the sign depends on the sign of $a_{n+1}$. Therefore 
\begin{equation}
\frac{(a_{n+1})^2}{\alpha} - \frac{(a_{n+1, \epsilon})^2}{\alpha} = \frac{(a_{n+1, \epsilon} \pm \epsilon)^2}{\alpha} - \frac{(a_{n+1, \epsilon})^2}{\alpha} = \frac{\pm 2\epsilon a_{n+1, \epsilon}}{\alpha} + \frac{\epsilon^2}{\alpha}. 
\end{equation}
The first term tends to $0$ as $b \rightarrow \infty$ as shown above. Also, since $\alpha = b^3$, $\alpha \rightarrow \infty$ as $b \rightarrow \infty$ and so $\epsilon^2/\alpha \rightarrow 0$ as $b \rightarrow \infty$. This proves equation (\ref{eqn:limit2}). 

Finally, for (\ref{eqn:limit3}), if $b > \epsilon$ then $b_\epsilon = b - \epsilon$ and 
\begin{eqnarray*}
\left|\frac{a_{n+1} b}{\alpha} - \frac{a_{n+1, \epsilon} b_\epsilon}{\alpha}\right| &=& \left|\frac{(a_{n+1, \epsilon} \pm \epsilon)(b_\epsilon + \epsilon)}{\alpha} - \frac{a_{n+1, \epsilon} b_\epsilon}{\alpha}\right| \\
&=& \left|\epsilon \frac{a_{n+1, \epsilon}}{\alpha} \pm \epsilon \frac{b_\epsilon}{\alpha} \pm \frac{\epsilon^2}{\alpha}\right|.
\end{eqnarray*}

As we have seen above, $a_{n+1, \epsilon}/\alpha \rightarrow 0$ as $b \rightarrow \infty$. Also, $b_\epsilon/\alpha \rightarrow 0$ as $b \rightarrow \infty$ since $\alpha = b^3$. Therefore, 
\begin{equation}
\frac{a_{n+1} b}{\alpha} - \frac{a_{n+1, \epsilon} b_\epsilon}{\alpha} \rightarrow 0
\end{equation}
as $b \rightarrow \infty$. But by (\ref{eqn:defn_a}), we have $a_{n+1, \epsilon} b_\epsilon/\alpha = -a_{n, \epsilon}$. Therefore
\begin{equation}
\frac{a_{n+1} b}{\alpha} \rightarrow -a_{n, \epsilon}
\end{equation}
as $b \rightarrow \infty$.

Using the results in equations (\ref{eqn:limit1})--(\ref{eqn:limit3}), we now proceed to show that properties (1), (2) and (4) hold true for appropriately large values of $b$. To prove (1), we only need to show that the Schur complement given by $(\ref{eqn:schur_compl})$ is positive definite for large values of $b$. Indeed, notice that from (\ref{eqn:limit2}) and (\ref{eqn:limit3}),  we have 
\begin{equation}
\left(\begin{array}{ccc}
\frac{(a_{n+1,\epsilon})^2-a_{n+1}^2}{\alpha} & \dots & -\frac{a_{n+1} b}{\alpha} \\
\vdots & \ddots & \vdots \\
-\frac{a_{n+1} b}{\alpha} & \dots & \frac{b_\epsilon^2-b^2}{\alpha} 
\end{array}\right) \rightarrow \left(\begin{array}{ccc}
0 & \dots & a_{n, \epsilon} \\
\vdots & \ddots & \vdots \\
a_{n, \epsilon} & \dots & 0
\end{array}\right)
\end{equation}
elementwise. Therefore the Schur complement of $\alpha$ in $A_{n+1}$ given in (\ref{eqn:schur_compl}) tends to 
\begin{equation}
\widetilde{A}_n + \left(\begin{array}{ccc}
0 & \dots & a_{n, \epsilon} \\
\vdots & \ddots & \vdots \\
a_{n, \epsilon} & \dots & 0
\end{array}\right). 
\end{equation}
as $b \rightarrow \infty$. This matrix is exactly the matrix $A_n$ with the $(1,n)$ and $(n,1)$ elements soft-thresholded at level $\epsilon$. Therefore, by the induction hypothesis, this matrix is positive definite and so is $A_{n+1}$ for large values of $b$. This proves property (1). 

To prove property $(2)$ note that the Schur complement of $\alpha$ in $\widetilde{A}_{n+1}$ is given by 
\begin{equation}
\widetilde{A}_n  + \left(\begin{array}{ccc}
\frac{(a_{n+1,\epsilon})^2}{\alpha} & \dots & 0 \\
\vdots & \ddots & \vdots \\
0& \dots & \frac{b_\epsilon^2-b^2}{\alpha} 
\end{array}\right).
\end{equation}
Notice that the $(1,1)$ entry of the righthand term is always positive whereas the $(n,n)$ element tends to $0$ as $b \rightarrow \infty$. Since the matrix $\widetilde{A}_n$ is positive definite by the induction hypothesis, the Schur complement of $\alpha$ in $\widetilde{A}_{n+1}$ is therefore also positive definite when $b$ is sufficiently large. This proves (2). 

Similarly, to prove (4), let us consider the Schur complement of $\alpha$ in the matrix $A_{n+1}$ with the $(1,n+1)$ and $(n+1,1)$ entries soft-thresholded at level $\epsilon$
\begin{equation}
\widetilde{A}_n  + \left(\begin{array}{ccc}
\frac{a_{n+1}^2 - (a_{n+1, \epsilon})^2}{\alpha} & \dots & -\frac{a_{n+1, \epsilon} b}{\alpha} \\
\vdots & \ddots & \vdots \\
-\frac{a_{n+1, \epsilon} b}{\alpha}& \dots & \frac{b_\epsilon^2-b^2}{\alpha} 
\end{array}\right).
\end{equation}
We have 
\begin{equation}
\frac{a_{n+1, \epsilon} b}{\alpha} = \frac{a_{n+1, \epsilon} (b_\epsilon + \epsilon)}{\alpha}  = \frac{a_{n+1, \epsilon} b_\epsilon}{\alpha}  + \epsilon \frac{a_{n+1, \epsilon}}{\alpha}. 
\end{equation}  
From (\ref{eqn:defn_a}) and (\ref{eqn:limit1}), we therefore have 
\begin{equation}
\frac{a_{n+1, \epsilon} b}{\alpha} \rightarrow -a_{n, \epsilon}
\end{equation}
as $b \rightarrow \infty$ and so the preceding Schur complement is asymptotic to the matrix $A_n$ with the $(1,n)$ and $(n,1)$ elements soft-thresholded at level $\epsilon$. By the induction hypothesis, this matrix is positive definite and therefore the same is true for the matrix $A_{n+1}$  with the $(1,n+1)$ and $(n+1,1)$ entries soft-thresholded at level $\epsilon$ when $b$ is large enough. This proves (4). 
 
Consequently, a matrix $A_{n+1}$ satisfying properties (1) to (4) can be obtained by choosing a value of $b$ large enough. This completes the induction. Therefore, for every $n \geq 3$, there exists a matrix $A_n \in \mathbb{P}_{C_n}^+$ such that $\eta_\epsilon(A_n)$ is not positive definite for $\epsilon = \epsilon_0 := 0.1$. 

Now let $\epsilon > 0$ be arbitrary. Notice that for $\alpha > 0$ and any matrix $A$, it holds that 
\begin{equation}
\eta_{\alpha \epsilon}(\alpha A) = \alpha \eta_\epsilon(A).  
\end{equation}
As a consequence, for a given value of $n$, consider the matrix 
\begin{equation}
A := \frac{\epsilon}{\epsilon_0} A_n. 
\end{equation}
Then $A \in \mathbb{P}_{C_n}^+$ since $A_n \in \mathbb{P}_{C_n}^+$. Moreover, 
\begin{equation}
\eta_\epsilon(A) = \eta_{\frac{\epsilon}{\epsilon_0} \epsilon_0}(A) = \frac{\epsilon}{\epsilon_0} \eta_{\epsilon_0}(A_n). 
\end{equation}
Since $\eta_{\epsilon_0}(A_n)$ is not positive definite by construction, it follows that $\eta_\epsilon(A)$ is not positive definite either. This provides the desired example of a matrix $A \in \mathbb{P}_{C_n}^+$ such that $\eta_\epsilon(A)$ is not positive definite.  Therefore, if every matrix $A \in \mathbb{P}_G^+$ retains positive definiteness when soft-thresholded at a given level $\epsilon > 0$, the graph $G$ must not contain any cycle and so is a tree. 

 ($3 \Rightarrow 2$) The implication in this direction holds for more general functions than the soft-thresholding function. The proof is therefore postponed to Section \ref{sec:gen_thres} (see Theorem \ref{thm:pd_G_tree}). 

Finally, since $2 \Rightarrow 1$ trivially, the three statements of the theorem are equivalent. This completes the proof of the theorem. 
\end{proof}

\begin{corollary}[Complete graph case]
For every $n \geq 3$, and every $\epsilon > 0$ there exists a matrix $A \in \mathbb{P}_n^+$ such that $\eta_\epsilon(A) \not\in \mathbb{P}_n^+$. 
\end{corollary}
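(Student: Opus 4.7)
The plan is to read the corollary off directly from Theorem \ref{Th_Princ_Soft_Thres}, applied to the cycle graph $C_n$. For any $n \geq 3$, the graph $C_n$ is connected but is not a tree, since it contains an $n$-cycle. By the equivalence $(1) \Leftrightarrow (3)$ of the theorem, condition (1) must also fail for $G = C_n$. The negation of (1) is precisely the assertion that for every $\epsilon > 0$ there exists $A \in \mathbb{P}_{C_n}^+$ with $\eta_\epsilon(A) \not> 0$.

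Since any matrix in $\mathbb{P}_{C_n}^+$ is by definition an $n \times n$ positive definite matrix (with some extra zero pattern), we have the inclusion $\mathbb{P}_{C_n}^+ \subseteq \mathbb{P}_n^+$. Hence the matrix $A$ witnessing the failure of (1) for $G = C_n$ already lies in $\mathbb{P}_n^+$ and satisfies $\eta_\epsilon(A) \not\in \mathbb{P}_n^+$, which is exactly the conclusion of the corollary. Equivalently, one may point directly to the inductively constructed matrices $A_n$ in the preceding proof: after the rescaling $A = (\epsilon/\epsilon_0) A_n$ used at the end of that proof, these matrices serve as explicit witnesses for arbitrary $\epsilon > 0$.

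No substantive obstacle is expected here; the corollary is a direct relaxation of the sparsity-constrained statement just proved, obtained by enlarging the ambient cone from $\mathbb{P}_{C_n}^+$ to $\mathbb{P}_n^+$. The only remark worth making is that the complete graph $K_n$ could be used in place of $C_n$ (since $\mathbb{P}_{K_n}^+ = \mathbb{P}_n^+$), but the $C_n$ route is preferable because the inductive construction of Theorem \ref{Th_Princ_Soft_Thres} already produces the needed counterexamples.
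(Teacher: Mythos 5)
Your argument is correct and is exactly the intended derivation: the paper states this corollary without proof precisely because the matrices $A_n \in \mathbb{P}_{C_n}^+ \subseteq \mathbb{P}_n^+$ constructed (and rescaled for arbitrary $\epsilon$) in the proof of Theorem \ref{Th_Princ_Soft_Thres} already serve as the required witnesses. Nothing is missing.
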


\section{General thresholding and entrywise maps}\label{sec:gen_thres}
The result of the previous section shows that the commonly used soft-thresholding procedure does not map the cone of positive definite matrices into itself. A natural question to ask therefore is whether other mappings are better adept at preserving positive definiteness. 

In this section, we completely characterize the functions that do so when applied to every off-diagonal element of a positive definite matrix. We begin by introducing some notation and reviewing previous results from the literature for the case where the function is also applied to the diagonal. 
\begin{definition}
For a function $f: \mathbb{R} \rightarrow \mathbb{R}$, denote by:  
\begin{itemize}
\item $f[A]$ the matrix obtained by applying $f$ to every element of the matrix $A$, i.e., 
\begin{equation}
(f[A])_{ij} = f(a_{ij}); 
\end{equation}
\item $f^*[A]$ the matrix obtained by applying $f$ to every element of the matrix $A$, except the diagonal, i.e., 
\begin{equation}
(f^*[A])_{ij} = \left\{\begin{array}{ll}f(a_{ij}) & \textrm{ if } i \not= j \\ a_{ii} & \textrm{  if } i=j.\end{array}\right.
\end{equation}
\end{itemize}
\end{definition}

We now compare $f[A]$ and $f^*[A]$ for $A > 0$. Clearly, $f^*[A] = f[A] + D_A$, where $D_A$ is the diagonal matrix 
\begin{equation}
D_A = \diag(a_{11}-f(a_{11}), \dots, a_{nn}-f(a_{nn})).
\end{equation}
As a consequence, if $f[A] > 0$ and the elements of $D_A$ are nonnegative, then $f^*[A] > 0$. Such is the case when $|f(x)| \leq |x|$. 

\begin{remark}\label{rem:f_and_f_star}
The condition that $|f(x)| \leq |x|$ is a mild restriction which allows us to conclude that $f[A] > 0 \Rightarrow f^*[A] > 0$. As we shall see below, the converse is generally false for matrices of a given dimension. Hence the previous results in the literature characterizing functions which preserve positive definiteness, when the function is also applied to diagonal elements, are unnecessarily too restrictive. In this sense, previous results 	in the field are not directly applicable to problems that arise in modern-day applications. 
\end{remark}

\subsection{Background material: Results for $f[A]$}
It is well-known that functions preserving positive definiteness when applied to every element of the matrix must have a certain degree of smoothness and non-negative derivatives. As we will see later, this is not true anymore when the diagonal is left untouched. 

\begin{theorem}[see Horn {\cite[Theorem 1.2]{Horn_infinitely}}]\label{thm:hj}
Let $f$ be a continuous real-valued function on $(0,\infty)$ and suppose that $f[A]$ is positive semidefinite for every $n \times n$ symmetric positive semidefinite matrix $A = (a_{ij}) $ with positive entries. Then $f$ is $(n-3)$-times continuously differentiable and $f^{(k)}(t) \geq 0$ for every $t \in (0,\infty)$ and every $k=0,\dots,n-3$. 
\end{theorem}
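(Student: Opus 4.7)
The plan is to extract derivative information about $f$ by probing $f[\cdot]$ on carefully chosen low-rank perturbations of the rank-one positive semidefinite matrix $aJ := a\mathbf{1}\mathbf{1}^T$ for $a > 0$. The guiding principle is that $aJ$ has a one-dimensional range (spanned by $\mathbf{1}$) and an $(n-1)$-dimensional null space $\mathbf{1}^\perp$, so that when we perturb $A = aJ + P$ with $P$ small and supported on $\mathbf{1}^\perp$, the positive semidefinite constraint on $f[A]$ becomes, to leading order in the perturbation, a positive semidefinite constraint on a matrix on $\mathbf{1}^\perp$ whose coefficients can be matched to successive derivatives of $f$ at $a$.

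First, I would dispose of the case $k=0$ by applying $f$ elementwise to $aJ$ itself: the image is $f(a) J$, which is positive semidefinite if and only if $f(a) \geq 0$. For the first derivative (requiring $n \geq 4$), I would fix a unit vector $e \in \mathbf{1}^\perp$ and consider the rank-two positive semidefinite matrix
\begin{equation*}
A_\delta = aJ + \delta^{2} e e^{T}, \qquad 0 < \delta \ll 1,
\end{equation*}
which has strictly positive entries for $\delta$ small. Projecting $f[A_\delta]$ onto $\mathbf{1}^\perp$ annihilates the $f(a) J$ part, and the Rayleigh quotient in direction $e$ returns a finite-difference expression for $f'(a)$. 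Letting $\delta \to 0$ and combining with the continuity hypothesis yields both the existence of $f'(a)$ and the inequality $f'(a) \geq 0$.

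For $1 \leq k \leq n-3$, I would argue by induction on $k$. Having established $f \in C^{k-1}$ with non-negative derivatives up to order $k-1$, I would use perturbations of higher rank of the form
\begin{equation*}
A = aJ + \sum_{j=1}^{k+1} t_{j}\, e_{j} e_{j}^{T},
\end{equation*}
with the $e_{j}$ chosen in $\mathbf{1}^\perp$ and whose Hadamard powers span enough directions in the space of symmetric matrices on $\mathbf{1}^\perp$ to isolate the coefficient $f^{(k)}(a)$ in a multivariate Taylor expansion of $f(a + \sum_{j} t_{j} e_{j,i} e_{j,i'})$. A suitable iterated finite-difference operator applied to the positive semidefinite matrix $f[A]$, followed by $t_{1},\dots,t_{k+1} \to 0$, then produces a scalar multiple of $f^{(k)}(a)$ times a positive semidefinite quadratic form; positivity of $f^{(k)}(a)$ and the continuity of $f^{(k)}$ follow.

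The main obstacle is the combinatorial bookkeeping in the induction step: one must isolate the contribution of $f^{(k)}(a)$ cleanly from the already controlled lower-order derivatives and from the rank-one term $f(a) J$. This is precisely where the bound $k \leq n-3$ enters. The ambient space of symmetric matrices one can manipulate after projecting out the $\mathbf{1}$-direction is $(n-1)$-dimensional, and peeling off the $f(a) J$ contribution and accommodating the symmetry constraints on the test directions $e_j$ eats up two further degrees of freedom, leaving $n-3$ independent "slots" into which the successive derivatives can be read off. A delicate but standard limiting argument in these difference quotients then simultaneously yields differentiability of order $n-3$ and the non-negativity of each $f^{(k)}$ for $k \leq n-3$.
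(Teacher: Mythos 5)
The paper does not prove this statement; it is quoted verbatim from Horn \& Johnson (Theorem 6.3.7), whose proof goes back to Horn's divided-difference argument. Measured against that, your proposal has the right opening move (probing $f[\cdot]$ on perturbations of $a\mathbf{1}\mathbf{1}^T$ and testing against vectors orthogonal to $\mathbf{1}$ and to higher Hadamard powers of the perturbing vector), but it has a genuine gap at the central point of the theorem: the passage from nonnegativity of limits of difference quotients to the \emph{existence} of derivatives. From $z^T f[aJ+\delta^2 ee^T]z\ge 0$ with $z\perp\mathbf{1}$ you obtain that certain symmetric combinations of divided differences of $f$ are nonnegative for every small $\delta$; for a function that is merely continuous, this does not produce $f'(a)$ --- a one-parameter family of nonnegative difference quotients can fail to converge, and continuity alone gives no upgrade. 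The same circularity infects your induction step: to ``isolate $f^{(k)}(a)$ in a multivariate Taylor expansion'' you must already expand $f$ to order $k$, while the inductive hypothesis only gives $f\in C^{k-1}$. The actual mechanism in the standard proof is to take a single vector $u$ with distinct entries, set $A=aJ+tuu^T$, and choose $z$ orthogonal to $(u_i^j)_i$ for $j=0,\dots,n-2$ (a Vandermonde condition, possible because $\dim=n$); the inequality $z^Tf[A]z\ge 0$ then says that all divided differences of $f$ of order $n-1$ are nonnegative. One then invokes the Boas--Widder/Popoviciu regularity theorem: a continuous function on an interval whose $m$-th order divided differences are all nonnegative is automatically $C^{m-2}$ with $f^{(j)}\ge 0$ for $j\le m-2$. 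That theorem is the missing ingredient in your argument, and it is also the true source of the exponent $n-3$ (namely $m-2$ with $m=n-1$), not the ``two degrees of freedom eaten by symmetry constraints'' you describe.

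A secondary, fixable issue: your higher-order probe $A=aJ+\sum_{j}t_je_je_j^T$ with several vectors $e_j\in\mathbf{1}^\perp$ is harder to control than the single-vector family $aJ+tuu^T$, because the orthogonality conditions needed to kill the low-order terms are the Vandermonde conditions $\sum_iz_iu_i^m=0$, $m=0,\dots,k$, attached to powers of one fixed vector; with several independent $e_j$ the cross terms $e_{j,i}e_{j',i'}$ do not organize into squares $(\sum_iz_iu_i^m)^2$ and the sign information degrades. The base case $k=0$ is fine.
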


\begin{corollary}
The soft-thresholding operation is not guaranteed to preserve positive semidefiniteness when the diagonal is also thresholded. 
\end{corollary}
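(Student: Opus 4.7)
The plan is to derive the corollary as a direct contrapositive of Theorem \ref{thm:hj}. Fix $\epsilon > 0$ and consider the soft-thresholding function $f_\epsilon^S$ restricted to the positive ray $(0,\infty)$. On this ray it simplifies to $f_\epsilon^S(x) = (x-\epsilon)_+$, which is continuous but has a corner at $x = \epsilon$: the left derivative is $0$ and the right derivative is $1$. In particular, $f_\epsilon^S$ is not continuously differentiable on $(0,\infty)$.

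I would then apply Theorem \ref{thm:hj} with $n = 4$. That theorem asserts that if $f_\epsilon^S[A]$ were positive semidefinite for every $4 \times 4$ positive semidefinite matrix $A$ with entries in $(0,\infty)$, the function $f_\epsilon^S$ would have to be $(4-3) = 1$-times continuously differentiable on $(0,\infty)$. Since this contradicts the non-differentiability at $x = \epsilon$, we conclude that there must exist a $4 \times 4$ positive semidefinite matrix $A$ with strictly positive entries such that $f_\epsilon^S[A]$ fails to be positive semidefinite. Because any such counterexample is a witness that the entrywise soft-thresholding operation does not preserve the cone $\mathbb{P}^+$ in the classical sense of Schoenberg-type theorems, the corollary follows.

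Nothing here is an obstacle: the only step requiring care is verifying that the hypothesis of Theorem \ref{thm:hj} is being invoked correctly, namely on positive-entry matrices, which is compatible with the domain restriction. One could alternatively exhibit a concrete $4 \times 4$ counterexample by taking a matrix close to a PSD matrix whose off-diagonal entries straddle the value $\epsilon$ (so that small perturbations get truncated discontinuously in derivative), but the theorem-based argument is cleaner and applies uniformly for every $\epsilon > 0$ and every dimension $n \geq 4$.
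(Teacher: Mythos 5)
Your argument is correct and is exactly the route the paper takes: the paper's one-line proof ("follows from the non-differentiability of the soft-thresholding function") is an implicit appeal to Theorem \ref{thm:hj}, which you have simply spelled out with $n=4$ and the corner of $(x-\epsilon)_+$ at $x=\epsilon$. No gaps.
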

\begin{proof}
This follows easily from the non-differentiability of the soft-thresholding function. 
\end{proof}

\begin{corollary}\label{cor:abs_mon_0_inf}
Let $f$ be a continuous function and assume $f[A]$ is positive semidefinite for every symmetric positive semidefinite matrix $A$ with positive entries. Then $f \in C^\infty(0,\infty)$ and $f^{(k)}(t) \geq 0$ for every $t \in (0,\infty)$ and every $k \geq 0$.  
\end{corollary}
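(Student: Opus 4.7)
The plan is essentially a one-line reduction to Theorem \ref{thm:hj}. The hypothesis of the corollary is that $f[A]$ is positive semidefinite for \emph{every} positive semidefinite matrix $A$ with positive entries, with no restriction on the dimension of $A$. In particular, for each fixed integer $n \geq 3$, the function $f$ satisfies the hypothesis of Theorem \ref{thm:hj} at dimension $n$.

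Applying Theorem \ref{thm:hj} at dimension $n$, I would conclude that $f$ is $(n-3)$-times continuously differentiable on $(0,\infty)$ and that $f^{(k)}(t) \geq 0$ for every $t \in (0,\infty)$ and every $k = 0, 1, \dots, n-3$. Since this holds for every integer $n \geq 3$, letting $n \to \infty$ yields that $f^{(k)}$ exists and is continuous on $(0,\infty)$ for every $k \geq 0$, so $f \in C^\infty(0,\infty)$. Moreover, for each fixed $k \geq 0$, choosing $n = k+3$ gives $f^{(k)}(t) \geq 0$ for all $t \in (0,\infty)$. Combining, $f^{(k)}(t) \geq 0$ for every $k \geq 0$ and every $t > 0$, as required.

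There is no real obstacle here; the corollary is a direct ``pass to the limit in $n$'' consequence of Theorem \ref{thm:hj}. The only point worth noting explicitly is that the quantifier over $A$ in the hypothesis is unrestricted in size, which is what allows us to instantiate Theorem \ref{thm:hj} simultaneously for every $n$. No additional compactness or continuity argument is needed, since the smoothness and non-negativity conclusions at order $k$ depend only on the dimension $n = k+3$, and the bound $f^{(k)} \geq 0$ is independent of $n$ once $n$ is large enough.
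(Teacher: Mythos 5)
Your reduction is exactly the intended one: the paper states this corollary immediately after Theorem \ref{thm:hj} with no separate proof precisely because it follows by instantiating that theorem at every dimension $n \geq 3$ and letting $n \to \infty$, which is what you do. The argument is correct and complete.
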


Corollary \ref{cor:abs_mon_0_inf} provides a necessary condition for a function $f$ to preserve positive definiteness when applied elementwise to a positive definite matrix. We shall show below that this condition is also sufficient. We first recall some facts about absolutely monotonic functions and the Hadamard product. 

\begin{definition}
Let $0 < \alpha \leq \infty$. A function $f \in C^\infty(0,\alpha)$ is said to be \emph{absolutely monotonic} on $(0, \alpha)$ if $f^{(k)}(x) \geq 0$ for every $x \in (0,\alpha)$ and every $k \geq 0$. 
\end{definition}

The following theorem characterizes the class of absolutely monotonic functions on $(0,\alpha)$. 

\begin{theorem}[see {\cite[Chapter
IV]{lorentz_bernstein_poly}}]\label{thm:abs_monotonic_equiv}
Let $0 < \alpha \leq \infty$. Then the following are equivalent: 
\begin{enumerate}
\item $f$ is absolutely monotonic on $(0,\alpha)$; 
\item $f$ is the restriction to $(0,\alpha)$ of an analytic function on $D(0,\alpha) := \{z \in \mathbb{C} : |z| < \alpha\}$ with positive Taylor coefficients, i.e.,  
\begin{equation}
f(x) = \sum_{n=0}^\infty a_n x^n \qquad (x \in (0,\alpha))
\end{equation}
for some $a_n \geq 0$. 
\end{enumerate}
\end{theorem}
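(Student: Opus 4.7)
The implication $(2) \Rightarrow (1)$ is immediate: within the disc of convergence the power series may be differentiated term-by-term, giving $f^{(k)}(x) = \sum_{n \geq k} a_n \frac{n!}{(n-k)!} x^{n-k} \geq 0$ for $x \in (0,\alpha)$ and every $k \geq 0$.

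For the nontrivial direction $(1) \Rightarrow (2)$, the first step is to identify the candidate Taylor coefficients at $0$ as the right-limits $a_k := \lim_{x \to 0^+} f^{(k)}(x)/k!$, which exist and are finite because each $f^{(k)}$ is nondecreasing on $(0,\alpha)$ (since $f^{(k+1)} \geq 0$) and bounded above on any compact subinterval. Applying Taylor's theorem with integral remainder on $[x_0, y]$ for $0 < x_0 < y < \alpha$ gives
\[
f(y) = \sum_{k=0}^{n-1} \frac{f^{(k)}(x_0)}{k!}(y - x_0)^k + \frac{1}{(n-1)!}\int_{x_0}^{y}(y - t)^{n-1} f^{(n)}(t)\,dt,
\]
where both the partial sum and the remainder are nonnegative. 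Hence the partial sum is bounded above by $f(y)$; letting $x_0 \to 0^+$ termwise via monotone convergence yields $\sum_{k=0}^{n-1} a_k y^k \leq f(y)$ for every $n$ and every $y \in (0,\alpha)$. This forces $\limsup_{k \to \infty} a_k^{1/k} \leq 1/\alpha$, so $g(z) := \sum_{k \geq 0} a_k z^k$ is analytic on $D(0,\alpha)$ with nonnegative Taylor coefficients.

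It remains to show $g \equiv f$ on $(0,\alpha)$, i.e., that the remainder $R_n(y) := f(y) - \sum_{k=0}^{n-1} a_k y^k$ tends to $0$. The key estimate, using monotonicity of $f^{(n)}$, is $R_n(y) \leq \frac{f^{(n)}(\alpha')}{n!} y^n$ for any $\alpha' \in [y, \alpha)$. Applying the same convergence argument at $\alpha'$ shows $\frac{f^{(n)}(\alpha')}{n!}(\alpha'' - \alpha')^n \to 0$ for any $\alpha'' \in (\alpha', \alpha)$, and then
\[
R_n(y) \leq \frac{f^{(n)}(\alpha')}{n!}(\alpha'' - \alpha')^n \left(\frac{y}{\alpha'' - \alpha'}\right)^n \to 0
\]
whenever $y < \alpha'' - \alpha'$, a condition achievable for $y < \alpha/2$. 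This establishes $g = f$ on $(0, \alpha/2)$.

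A bootstrap extends the identity to all of $(0,\alpha)$: if $f = g$ on $(0, \delta)$ with $\delta < \alpha$, then for any $c$ slightly less than $\delta$ the Taylor series of $f$ at $c$ coincides with that of $g$ at $c$ (by matching of all derivatives on the agreement interval); this common series converges to $f$ on $(c, (c+\alpha)/2)$ by the above remainder estimate recentered at $c$, and simultaneously converges to $g$ on $D(c, \alpha - c)$ by analyticity of $g$. Letting $c \to \delta^-$ extends the agreement to $(0, (\delta + \alpha)/2)$, and iterating drives the right endpoint to $\alpha$. The main technical obstacle is the three-parameter remainder estimate — arranging $y$, $\alpha'$, $\alpha''$ simultaneously inside $(0, \alpha)$ so that the geometric ratio $y/(\alpha'' - \alpha')$ is strictly less than $1$ — with the subsequent bootstrap being routine Taylor bookkeeping once the first-stage identity $f = g$ on $(0, \alpha/2)$ is secured.
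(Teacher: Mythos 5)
The paper does not prove this theorem; it is quoted from Lorentz's book on Bernstein polynomials, so there is no in-paper argument to compare against. Your proof is correct and follows the classical Bernstein-type route: $(2)\Rightarrow(1)$ by termwise differentiation, and $(1)\Rightarrow(2)$ by extracting the limiting coefficients $a_k=\lim_{x\to 0^+}f^{(k)}(x)/k!$, bounding the nonnegative partial sums by $f(y)$ to get a radius of convergence at least $\alpha$, and then killing the Taylor remainder. Two small remarks. First, the existence and finiteness of $a_k$ comes from $f^{(k)}$ being nondecreasing \emph{and nonnegative} (the right limit at $0$ is its infimum, bounded below by $0$); your appeal to boundedness above on compacta is not the relevant point, though nothing breaks. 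Second, your crude bound $R_n(y)\le f^{(n)}(\alpha')y^n/n!$ only gives convergence on $(0,\alpha/2)$ and forces the bootstrap. The standard sharper estimate avoids this: substituting $t=x_0+s(y-x_0)$ in the integral remainder and using that $f^{(n)}$ is nondecreasing gives, for $x_0<y<y'<\alpha$,
\begin{equation*}
R_n^{x_0}(y)\;\le\;\Bigl(\tfrac{y-x_0}{y'-x_0}\Bigr)^{n}R_n^{x_0}(y')\;\le\;\Bigl(\tfrac{y-x_0}{y'-x_0}\Bigr)^{n}f(y'),
\end{equation*}
so the remainder tends to $0$ for \emph{every} $y\in(x_0,\alpha)$; letting $x_0\to 0^+$ then yields $f=g$ on all of $(0,\alpha)$ in one step, with no recentering. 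Your two-stage version with the bootstrap is nonetheless sound, so this is a matter of economy rather than correctness.
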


\begin{remark}\label{rem:abs_mon_taylor}
Let $0 < \alpha \leq \infty$. A function $f: (-\alpha, \alpha) \rightarrow \mathbb{R}$ can be represented as: 
\begin{equation}
f(x) = \sum_{n=0}^\infty a_n x^n \qquad (-\alpha < x < \alpha)
\end{equation} 
for some $a_n \geq 0$ if and only if $f$ extends analytically to $D(0,\alpha)$ and is absolutely monotonic on $(0,\alpha)$. 
\end{remark}

Recall that the \emph{Hadamard product} (or Schur product) of two $n \times n$ matrices $A$ and $B$, denoted by $A \circ B$, is the matrix obtained by multiplying the two matrices entrywise, i.e., $(A \circ B)_{ij} = (a_{ij} b_{ij})$. Since $A \circ B$ is a principal submatrix of the Kronecker product $A \otimes B$, the matrix $A \circ B$ is positive definite if both $A$ and $B$ are symmetric positive definite. This last result is commonly known as the \emph{Schur product theorem}.  We now state the converse of Corollary \ref{cor:abs_mon_0_inf}. The proof follows immediately from Theorem \ref{thm:abs_monotonic_equiv} and the Schur product theorem.

\begin{lemma}\label{lem:abs_mon_implies_pos}
Let $0 < \alpha \leq \infty$ and let $f : (0, \alpha) \rightarrow \mathbb{R}$ be absolutely monotonic on $(0,\alpha)$. Then $f[A]$ is positive semidefinite for every symmetric positive semidefinite matrix $A$ with entries in $(0,\alpha)$. 
\end{lemma}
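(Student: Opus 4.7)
The plan is to exploit the power series characterization of absolutely monotonic functions provided by Theorem \ref{thm:abs_monotonic_equiv}, combined with the Schur product theorem, exactly as the authors hint.

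First I would invoke Theorem \ref{thm:abs_monotonic_equiv} to write
\begin{equation*}
f(x) = \sum_{n=0}^{\infty} a_n x^n, \qquad x \in (0,\alpha),
\end{equation*}
with $a_n \geq 0$ for all $n$, where the series converges on the disc $D(0,\alpha)$. Since every entry of $A$ lies in $(0,\alpha)$, we may apply this identity entrywise to conclude that
\begin{equation*}
f[A] = \sum_{n=0}^{\infty} a_n A^{\circ n},
\end{equation*}
where $A^{\circ n}$ denotes the $n$-th Hadamard power of $A$ (with the convention $A^{\circ 0}$ equal to the all-ones matrix, which is positive semidefinite as the outer product of the all-ones vector with itself). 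The entrywise convergence of this series follows from the convergence of the scalar series at each $a_{ij} \in (0,\alpha)$.

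Next, I would apply the Schur product theorem: since $A \geq 0$, and since the Hadamard product of positive semidefinite matrices is positive semidefinite, induction on $n$ gives $A^{\circ n} \geq 0$ for every $n \geq 0$. Because each coefficient $a_n$ is nonnegative, every partial sum $\sum_{n=0}^{N} a_n A^{\circ n}$ is a nonnegative linear combination of positive semidefinite matrices, and hence positive semidefinite. The set of $n \times n$ positive semidefinite matrices is closed, so taking the limit $N \to \infty$ yields $f[A] \geq 0$.

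There is no real obstacle here; the only minor point requiring care is justifying the interchange of the sum with the entrywise evaluation, which is immediate from pointwise convergence of the power series on $(0,\alpha)$, and the closure of the positive semidefinite cone under limits, which ensures the infinite series inherits positive semidefiniteness from its partial sums.
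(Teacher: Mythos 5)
Your proposal is correct and follows essentially the same route as the paper's own argument: expand $f$ as a power series with nonnegative coefficients via Theorem \ref{thm:abs_monotonic_equiv}, observe that each partial sum applied entrywise is positive semidefinite by the Schur product theorem, and pass to the limit (the paper phrases the last step via pointwise convergence of the quadratic forms $\beta^t f_k[A]\beta$, which is the same as your appeal to the closedness of the positive semidefinite cone). Your explicit remark that $A^{\circ 0}$ is the all-ones matrix, hence positive semidefinite, is a correct and welcome detail.
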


Combining Corollary \ref{cor:abs_mon_0_inf} and Lemma \ref{lem:abs_mon_implies_pos}, and assuming $f$ is continuous, we obtain the following characterization of functions preserving positive definiteness for every positive semidefinite matrix with positive entries. The same result also appears in \cite{vasudeva79}, where it is shown that the continuity assumption is not required. 
\begin{theorem}[{\cite[Theorem 6]{vasudeva79}}]
Let $f: (0,\infty) \rightarrow \mathbb{R}$. Then $f[A]$ is positive semidefinite for every symmetric positive semidefinite matrix $A$ with positive entries if and only if $f$ is absolutely monotonic on $(0,\infty)$. 
\end{theorem}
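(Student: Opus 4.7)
The plan is that this theorem is essentially a direct packaging of the two results immediately preceding it: Corollary~\ref{cor:abs_mon_0_inf} supplies the necessary direction and Lemma~\ref{lem:abs_mon_implies_pos} supplies the sufficient direction. I would write the proof as a short two-line combination rather than reprove anything.

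For the forward direction, I would assume that $f[A]$ is positive semidefinite for every positive semidefinite matrix $A$ (of arbitrary dimension $n \geq 1$) with entries in $(0,\infty)$. In particular this holds for each fixed $n$, so the hypothesis of Corollary~\ref{cor:abs_mon_0_inf} is satisfied. That corollary then yields $f \in C^\infty(0,\infty)$ together with $f^{(k)}(t) \geq 0$ for all $t \in (0,\infty)$ and all $k \geq 0$, which is exactly the defining property of an absolutely monotonic function on $(0,\infty)$.

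For the converse, suppose $f$ is absolutely monotonic on $(0,\infty)$. Lemma~\ref{lem:abs_mon_implies_pos} then gives directly that $f[A]$ is positive semidefinite for every positive semidefinite $A$ with entries in $(0,\infty)$, with no further work needed. The underlying mechanism here is the power-series representation from Theorem~\ref{thm:abs_monotonic_equiv} combined with the Schur product theorem (every positive coefficient times a Hadamard power of $A$ remains positive semidefinite, and positive semidefiniteness is preserved under entrywise convergent sums).

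There is no real obstacle in this proof; the substantive analytic content has already been extracted into the earlier corollary and lemma. The only subtlety worth flagging is the implicit continuity hypothesis: Corollary~\ref{cor:abs_mon_0_inf} was derived from Horn--Johnson's Theorem~\ref{thm:hj}, which assumes $f$ is continuous on $(0,\infty)$. Thus the statement above should be read as pertaining to continuous $f$, or, alternatively, continuity could be extracted from the positive-definite-preservation hypothesis via a separate argument (in the spirit of Rudin) before invoking the corollary. Either way, the final step is just to read off the equivalence.
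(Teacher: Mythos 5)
Your proposal is correct and matches the paper exactly: the theorem is stated there as an immediate combination of Corollary~\ref{cor:abs_mon_0_inf} (necessity) and Lemma~\ref{lem:abs_mon_implies_pos} (sufficiency), with no further argument given. Your remark about the implicit continuity hypothesis inherited from Theorem~\ref{thm:hj} is a fair observation about the paper's own statement, not a defect of your proof.
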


The following theorem shows that the result remains the same if the entries of the positive semidefinite matrix $A$ are constrained to be in a given interval. Special cases of this result have been proved  by different authors; we state only the most general version here. 
\begin{theorem}[see Schoenberg \cite{Schoenberg42}, Rudin \cite{Rudin59}, Vasudeva \cite{vasudeva79}, Hiai \cite{Hiai2009}, Herz \cite{Herz63}, Christensen and Ressel \cite{Christensen_et_al78}]\label{thm:characterization_diag_thres}
Let $0 < \alpha \leq \infty$ and let $f: (-\alpha, \alpha) \rightarrow \mathbb{R}$. Then $f[A]$ is positive semidefinite for every symmetric positive semidefinite matrix $A$ with entries in $(-\alpha, \alpha)$ if and only if $f$ is analytic on the disc $\{z \in \mathbb{C}: |z| < \alpha\}$ and absolutely monotonic on $(0,\alpha)$. 
\end{theorem}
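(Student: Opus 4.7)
The argument splits into sufficiency and necessity, and both directions build on the characterization over positive entries that is already available from Corollary~\ref{cor:abs_mon_0_inf} and Lemma~\ref{lem:abs_mon_implies_pos}.

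For sufficiency, suppose $f$ is analytic on $D(0,\alpha)$ and absolutely monotonic on $(0,\alpha)$. By Remark~\ref{rem:abs_mon_taylor} we may expand $f(x)=\sum_{n=0}^{\infty} a_n x^n$ on $(-\alpha,\alpha)$ with every $a_n\geq 0$. For any positive semidefinite $A$ whose entries lie in $(-\alpha,\alpha)$, the Schur product theorem shows that each Hadamard power $A^{\circ n}$ is positive semidefinite, so the partial sums $\sum_{n=0}^{N}a_n A^{\circ n}$ are positive semidefinite; they converge entrywise to $f[A]$ (the entries of $A$ are bounded in modulus by some $\rho<\alpha$, making the series absolutely convergent), and positive semidefiniteness passes to the entrywise limit. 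This is essentially the argument of Lemma~\ref{lem:abs_mon_implies_pos} carried through on the enlarged interval.

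For necessity, I would proceed in two steps. First, the hypothesis applied to positive semidefinite matrices with entries in $(0,\alpha)$ gives, via Corollary~\ref{cor:abs_mon_0_inf} (combined, if needed, with a separate argument establishing continuity of $f$ on $(0,\alpha)$ from its $2\times 2$ preservation property, or by invoking Rudin's refinement that dispenses with continuity), that the restriction $f|_{(0,\alpha)}$ is absolutely monotonic on $(0,\alpha)$. Theorem~\ref{thm:abs_monotonic_equiv} then produces an analytic function $g(z)=\sum_{n\geq 0}a_n z^n$ on $D(0,\alpha)$ with $a_n\geq 0$ that agrees with $f$ on $(0,\alpha)$. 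The second step is to show $f=g$ on $(-\alpha,0)$ as well, which automatically yields analyticity of $f$ on the entire disc. A preliminary observation is local boundedness: testing $f$ on the $2\times 2$ positive semidefinite matrix $\bigl(\begin{smallmatrix}t & x \\ x & t\end{smallmatrix}\bigr)$ (which is positive semidefinite for $t\geq |x|$) forces $|f(x)|\leq f(t)\to g(|x|)$ as $t\downarrow|x|$. To promote this bound to the identity $f(x)=g(x)$, I would apply the preservation property to rank-one matrices $vv^t$ where $v$ has mixed signs; their entries realize products $\pm x_i x_j$ with any prescribed sign pattern, and the PSD conditions on $f[vv^t]$, compared with the expansion $g[vv^t]=\sum a_n(vv^t)^{\circ n}$ guaranteed by the sufficiency direction, should pin down the values of $f$ on $(-\alpha,0)$.

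The main obstacle is precisely this last step. Simple $2\times 2$ or $3\times 3$ tests only yield inequalities such as $f(-rs)^2\leq f(r^2)f(s^2)$ rather than equalities, so one needs a genuinely richer family of test matrices — together with a limiting or moment-type argument — to force $f$ to coincide with the specific power series $g$ on $(-\alpha,0)$ rather than merely be dominated by it. This is the delicate ingredient developed in increasing generality by Schoenberg, Rudin, Vasudeva, Hiai, Herz and Christensen–Ressel, and a self-contained proof amounts to assembling and refining those techniques.
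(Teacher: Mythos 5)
First, note that the paper does not prove Theorem~\ref{thm:characterization_diag_thres} at all: it is quoted from the literature (``Special cases of this result have been proved by different authors; we state only the most general version here''), so there is no internal proof to compare against. Judged on its own terms, your sufficiency argument is correct and is exactly the Schur-product/partial-sum argument of Lemma~\ref{lem:abs_mon_implies_pos}, extended to $(-\alpha,\alpha)$ via Remark~\ref{rem:abs_mon_taylor}; nothing is missing there. The first half of your necessity argument (absolute monotonicity of $f$ on $(0,\alpha)$ from matrices with positive entries) is also the right starting point, though you should be explicit that Corollary~\ref{cor:abs_mon_0_inf} as stated assumes continuity, which the theorem does not grant, so you genuinely need Rudin's continuity-free refinement rather than merely ``invoking it if needed.''

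The genuine gap is the one you name yourself: showing that $f$ agrees on $(-\alpha,0)$ with the analytic extension $g$ obtained from $(0,\alpha)$. Your proposed device --- rank-one matrices $vv^t$ with mixed-sign entries --- cannot close it, and for a structural reason: every principal-minor or Schur-complement condition on $f[vv^t]$ is an inequality that is symmetric under replacing $f$ on the negative axis by anything dominated in absolute value by $g(|x|)$ with appropriate signs, so such tests bound $|f|$ on $(-\alpha,0)$ but can never force the equality $f(x)=g(x)$ there. (Concretely, $f(x):=g(x)$ for $x\geq 0$ and $f(x):=-g(-x)$ for $x<0$ passes all your $2\times 2$ and rank-one tests yet is not the stated power series unless $g$ is odd.) Ruling out such impostors requires a richer family of test matrices of growing size whose entries sample $f$ densely on both sides of the origin --- e.g.\ Rudin's Toeplitz matrices built from $r\cos(n\theta)$ with $\theta/\pi$ irrational, or the integral-representation/Choquet-type arguments of Christensen and Ressel --- together with a limiting argument identifying the resulting positive definite sequences. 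As written, your proposal establishes sufficiency and the positive-axis half of necessity, but the analyticity of $f$ on the full disc, which is the actual content distinguishing this theorem from the positive-entries case, remains unproved.
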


Recall that one of the primary goals of regularizing positive definite matrices is to ``induce sparsity'', i.e., set small elements to zero. The following result shows that no thresholding function that induces sparsity is guaranteed to preserve positive definiteness. 
\begin{corollary}\label{cor:induce_sparsity}
Let $0 < \alpha \leq \infty$ and let $f: (-\alpha, \alpha) \rightarrow \mathbb{R}$ satisfy $f(0) = 0$ and $f(\gamma) = 0$ for some $\gamma \in (0, \alpha)$. Assume $f \not\equiv 0$ on $(-\alpha, \alpha)$. Then there exists a symmetric positive semidefinite matrix $A$ with entries in $(-\alpha, \alpha)$ such that $f[A]$ is not positive semidefinite. 
\end{corollary}
\begin{proof}
Assume $f[A]$ is positive semidefinite for every symmetric positive semidefinite matrix $A$ with entries in $(-\alpha, \alpha)$. Then, by Theorem \ref{thm:characterization_diag_thres}, 
\begin{equation}
f(z) = \sum_{k=1}^\infty a_k z^k \qquad (z \in D(0,\alpha))
\end{equation}
where $a_k = f^{(k)}(0)/k!\geq 0$. Since $f(\gamma) = 0$, we must have $a_k = 0$ for every $k \geq 1$, i.e., $f \equiv 0$. Thus, if $f \not\equiv 0$, there exists a symmetric positive semidefinite matrix $A$ such that $f[A]$ is not positive semidefinite. 
\end{proof}

\subsection{Preliminary results for $f^*[A]$}

Let $A$ be a symmetric positive definite matrix. We now proceed to analyze mappings $f$ that are applied only to off-diagonal elements of $A$. The next result provides a basic first constraint that $f$ must satisfy in order for $f^*[A]$ to retain positive definiteness. 

\begin{lemma}\label{lem:contraction}
Let $f : \mathbb{R} \rightarrow \mathbb{R}$ and assume $|f(\xi)| > |\xi|$ for some $\xi \in \mathbb{R}$. Then, for every graph $G = (V,E)$ containing at least one edge, there exists a matrix $A \in \mathbb{P}_G^+$ such that $f^*[A]$ is not positive semidefinite. 
\end{lemma}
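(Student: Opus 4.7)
The plan is to exhibit, for each graph $G$ with at least one edge, an explicit matrix $A \in \mathbb{P}_G^+$ whose image $f^*[A]$ fails positive semidefiniteness already on a $2 \times 2$ principal submatrix. The idea is to localize the nontrivial structure at a single edge: pick any $(i,j) \in E$, place $\xi$ at positions $(i,j)$ and $(j,i)$, and take the rest of the matrix to be a multiple of the identity. Since positive semidefiniteness is inherited by principal submatrices, this reduces the problem to a single $2\times 2$ determinant computation.

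Concretely, I would choose $c$ with $|\xi| < c < |f(\xi)|$---the hypothesis $|f(\xi)| > |\xi|$ is precisely what makes this interval nonempty---and set
\[
A := c\,I_n + \xi\bigl(e_i e_j^t + e_j e_i^t\bigr),
\]
so that $a_{kk} = c$ for all $k$, $a_{ij} = a_{ji} = \xi$, and every other off-diagonal entry of $A$ is zero. This matrix lies in $\mathbb{P}_G^+$: its zero pattern is supported on the single edge $(i,j) \in E$, and the symmetric rank-two perturbation $e_i e_j^t + e_j e_i^t$ has eigenvalues $\pm 1$ (with the remaining eigenvalues zero), so $A$ has spectrum $\{c-|\xi|,\,c,\ldots,c,\,c+|\xi|\}$, which is strictly positive by the choice $c > |\xi|$.

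Next I would inspect the $2\times 2$ principal submatrix of $f^*[A]$ indexed by $\{i,j\}$,
\[
\begin{pmatrix} c & f(\xi) \\ f(\xi) & c \end{pmatrix},
\]
whose determinant equals $c^2 - f(\xi)^2 < 0$ by the choice $c < |f(\xi)|$. Since every principal submatrix of a positive semidefinite matrix is itself positive semidefinite, $f^*[A]$ cannot be positive semidefinite, and the lemma is proved.

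There is essentially no serious obstacle here; the only point worth a moment of attention is that outside the $\{i,j\}$-block the off-diagonal entries of $f^*[A]$ equal $f(0)$, which need not vanish. This is irrelevant, however, since we only need a single non-PSD principal submatrix. The construction works uniformly even in the degenerate case $\xi = 0$ (where $A = c\,I_n$ and the offending $2\times 2$ block has off-diagonal entry $f(0)$), and it uses no information about $G$ beyond the existence of one edge.
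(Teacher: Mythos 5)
Your proof is correct and essentially identical to the paper's: the paper also places $\xi$ on a single edge with diagonal entries $|\xi|+\epsilon/2$ strictly between $|\xi|$ and $|f(\xi)| = |\xi|+\epsilon$, pads with an identity block, and concludes from the negative $2\times 2$ determinant via the principal-submatrix argument. Your explicit remark that the remaining off-diagonal entries of $f^*[A]$ equal $f(0)$ but are harmless is a point the paper leaves implicit.
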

\begin{proof}
Assume first that $|V| = 2$, and without loss of generality assume $(1,2) \in E$. Since $|f(\xi)| > |\xi|$, there exists $\epsilon > 0$ such that $|f(\xi)| = |\xi| + \epsilon$. Now consider the matrix
\begin{equation}
B := \left(\begin{array}{cc} |\xi|+\frac{\epsilon}{2} & \xi \\ \xi & |\xi| + \frac{\epsilon}{2}\end{array}\right)
\end{equation}
The matrix $B$ is positive definite, but $f^*[B]$ is not positive semidefinite. The general case of a graph with $n$ vertices follows by constructing the matrix $A = B \oplus I_{n-2}$, where $I_k$ denotes the $k \times k$ identity matrix. 
\end{proof}

Recall from Theorem 4.2 that functions preserving positive definiteness when applied to every element of a matrix (including the diagonal) of a given dimension have to be sufficiently smooth, and have non-negative derivatives on the positive real axis. However, when the diagonal is left untouched, the situation changes quite drastically. More precisely, a far larger class of functions preserves positivity, as the following result shows.

\begin{proposition}\label{prop:degree_contraction}
Let $G = (V,E)$ be a connected undirected graph and denote by $\Delta = \Delta(G)$ the maximum degree of the vertices of $G$. Assume $f: \mathbb{R} \rightarrow \mathbb{R}$ satisfies 
\begin{equation}
|f(x)| \leq c |x| \qquad \forall x \in \mathbb{R}, 
\end{equation}
for some $0 \leq c < \frac{1}{\Delta}$. Then $f^*[A] \in \mathbb{P}_G^+$ for every $A \in \mathbb{P}_G^+$.  
\end{proposition}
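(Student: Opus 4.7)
The plan is to establish positive definiteness via a direct quadratic form estimate combined with the pointwise bound on $f$. First, note that the hypothesis $|f(x)| \leq c|x|$ forces $f(0) = 0$, so $f^*[A]$ has zeros in exactly the same off-diagonal positions as $A$; hence $f^*[A] \in \mathbb{P}_G^+$ will follow once positive definiteness is proved.

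To prove positive definiteness, I would fix an arbitrary nonzero $x \in \mathbb{R}^n$ and bound
\begin{equation*}
x^t f^*[A] x \;=\; \sum_{i} a_{ii} x_i^2 \;+\; 2\sum_{(i,j) \in E} f(a_{ij}) x_i x_j \;\geq\; \sum_{i} a_{ii} x_i^2 \;-\; 2c \sum_{(i,j) \in E} |a_{ij}|\,|x_i|\,|x_j|.
\end{equation*}
The key step is to re-express the cross terms in a form compatible with the diagonal. Since every $2\times 2$ principal submatrix of $A$ is positive definite, we have $|a_{ij}| \leq \sqrt{a_{ii} a_{jj}}$ and in particular $a_{ii} > 0$ for all $i$. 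Setting $y_i := |x_i|\sqrt{a_{ii}}$, the off-diagonal sum becomes bounded by $\sum_{(i,j) \in E} y_i y_j$.

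Next I would apply the elementary inequality $2 y_i y_j \leq y_i^2 + y_j^2$ on each edge and regroup by vertex:
\begin{equation*}
2\sum_{(i,j) \in E} y_i y_j \;\leq\; \sum_{(i,j)\in E}(y_i^2 + y_j^2) \;=\; \sum_{i} \deg(i)\, y_i^2 \;\leq\; \Delta \sum_{i} y_i^2 \;=\; \Delta \sum_{i} a_{ii} x_i^2.
\end{equation*}
Substituting back gives
\begin{equation*}
x^t f^*[A] x \;\geq\; (1 - c\Delta) \sum_{i} a_{ii} x_i^2,
\end{equation*}
which is strictly positive when $x \neq 0$ because $c\Delta < 1$ and every $a_{ii} > 0$.

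There is no real obstacle here beyond choosing the right change of variables; the whole argument is a Gershgorin/Cauchy--Schwarz style estimate in disguise. The only subtlety is that a naive bound $|a_{ij}| \leq (a_{ii} + a_{jj})/2$ would lose a factor and fail to give the sharp threshold $1/\Delta$, so it is essential to use the geometric-mean bound $|a_{ij}| \leq \sqrt{a_{ii} a_{jj}}$ (coming from positivity of the $2\times 2$ principal minors) and rescale by $\sqrt{a_{ii}}$ before applying AM--GM on the edges.
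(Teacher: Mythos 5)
Your argument is correct, and it reaches the conclusion by a genuinely different route from the paper. The paper factors $f^*[A] = A \circ M_A$, where $M_A$ has unit diagonal and off-diagonal entries $f(a_{ij})/a_{ij}$ (each of modulus at most $c$, supported on $E$), applies Gershgorin's circle theorem to conclude $M_A > 0$ since each row of $M_A$ has at most $\Delta$ nonzero off-diagonal entries, and then invokes the Schur product theorem. You instead estimate the quadratic form of $f^*[A]$ directly: the bound $|a_{ij}| \leq \sqrt{a_{ii}a_{jj}}$ from the $2\times 2$ principal minors, the rescaling $y_i = |x_i|\sqrt{a_{ii}}$, and the per-edge inequality $2y_iy_j \leq y_i^2 + y_j^2$ regrouped by vertex degree yield $x^t f^*[A] x \geq (1-c\Delta)\sum_i a_{ii}x_i^2$. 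In effect you have unpacked, in this special case, both black boxes the paper uses: the rescaling by $\sqrt{a_{ii}}$ plays the role of the Schur product theorem's proof, and the degree count plays the role of Gershgorin. What your version buys is self-containedness and an explicit quantitative conclusion, $\lambda_{\mathrm{min}}(f^*[A]) \geq (1-c\Delta)\min_i a_{ii}$, which anticipates the eigenvalue bounds of Section \ref{sec:eig_inequalities}; what the paper's version buys is brevity and modularity, since the same two cited theorems are reused elsewhere. One small point worth making explicit in your write-up: your displayed sum $2\sum_{(i,j)\in E} f(a_{ij})x_ix_j$ silently uses $f(0)=0$ (which you do note separately) to discard the non-edge off-diagonal terms, and the sum over $E$ must count each unordered edge once for the identity $\sum_{(i,j)\in E}(y_i^2+y_j^2) = \sum_i \deg(i)y_i^2$ to hold; both are fine as you have set things up.
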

\begin{proof}
For every $A \in \mathbb{P}_G^+$, denote by $M_A$ the matrix with entries 
\begin{equation}
(M_A)_{ij} = \left\{\begin{array}{ll}
\frac{f(a_{ij})}{a_{ij}} & \textrm{ if } a_{ij} \not= 0 \textrm{ and } i \not=j \\
1 & \textrm{ if } i=j \\
0 & \textrm{ if } a_{ij} = 0 \textrm{ and } i \not= j
\end{array}\right..
\end{equation}
The matrix $f^*[A]$ can be written as 
\begin{equation}
f^*[A] = A \circ M_A.
\end{equation}
Since $0 \leq c < \frac{1}{\Delta}$, an application of Gershgorin's circle theorem demonstrates that $M_A > 0$. As a consequence, by the Schur product theorem, $A \circ M_A > 0$ and so $f^*[A] > 0$ for every $A \in \mathbb{P}_G^+$. 
\end{proof}

\begin{corollary}[Complete graph case]\label{cor:complete_contrac}
Let $n \geq 2$ and assume $f: \mathbb{R} \rightarrow \mathbb{R}$ satisfies 
\begin{equation}
|f(x)| \leq c |x| \qquad \forall x \in \mathbb{R}, 
\end{equation}
for some $0 \leq c < \frac{1}{n-1}$. Then $f^*[A] > 0$ for every $n \times n$ symmetric positive definite matrix $A$. 
\end{corollary}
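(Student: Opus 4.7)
The plan is to obtain this corollary as an immediate specialization of Proposition \ref{prop:degree_contraction} to the complete graph $G = K_n$. The key observations are: (i) in $K_n$ every pair of distinct vertices is adjacent, so the maximum degree satisfies $\Delta(K_n) = n-1$; and (ii) the sparsity constraint defining $\mathbb{P}_{K_n}^+$ is vacuous, so $\mathbb{P}_{K_n}^+$ coincides with $\mathbb{P}_n^+$, the cone of all $n \times n$ positive definite matrices.

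The steps I would carry out are therefore: first, fix an arbitrary $A \in \mathbb{P}_n^+$ and observe that, viewing $A$ as a matrix with zeros encoded by $K_n$, we have $A \in \mathbb{P}_{K_n}^+$. Second, verify that the hypothesis of Proposition \ref{prop:degree_contraction} is met: since $\Delta(K_n) = n-1$, the condition $0 \leq c < \frac{1}{n-1}$ in the corollary is exactly the condition $0 \leq c < \frac{1}{\Delta}$ in the proposition. Third, invoke Proposition \ref{prop:degree_contraction} to conclude that $f^*[A] \in \mathbb{P}_{K_n}^+ = \mathbb{P}_n^+$, i.e., $f^*[A] > 0$.

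Since the result is a direct specialization of the preceding proposition, there is no real obstacle in the argument. If anything, the only point to make explicit is that the definition $\mathbb{P}_G^+ = \{A \in \mathbb{P}_n^+ : a_{ij} = 0 \textrm{ if } (i,j) \notin E, \ i \neq j\}$ reduces to $\mathbb{P}_n^+$ when $G = K_n$, so that the conclusion applies to every positive definite matrix of size $n$ without any sparsity restriction. This closes the proof in essentially one line once Proposition \ref{prop:degree_contraction} is in hand.
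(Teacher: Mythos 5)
Your proposal is correct and is exactly the argument the paper intends: the corollary is stated as an immediate specialization of Proposition \ref{prop:degree_contraction} to $G = K_n$, where $\Delta(K_n) = n-1$ and $\mathbb{P}_{K_n}^+ = \mathbb{P}_n^+$. No further comment is needed.
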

The following corollary asserts that when operating on the off-diagonal elements, as compared to all the elements (including the diagonals), there are non-trivial functions ``inducing sparsity'' (i.e., setting elements to zero) that preserve positive definiteness. 
\begin{corollary}\label{cor:converse_pd_G_tree}
Let $G$ be a graph and let $0 \in S \subset \mathbb{R}$. Then there exists a function $f: \mathbb{R} \rightarrow \mathbb{R}$ such that: 
\begin{enumerate}
\item $f(x) = 0$ if and only if $x \in S$; 
\item $f^*[A] > 0$ for every $A \in \mathbb{P}_G^+$.
\end{enumerate}
\end{corollary}

\begin{remark}\label{rem:consequences}
Despite the simplicity of the above proofs (especially in contrast to Theorems \ref{Th_Princ_Soft_Thres}, \ref{thm:pd_G_tree}, and \ref{thm:carac_diag_not_thres} of this paper), Proposition \ref{prop:degree_contraction}, Corollary \ref{cor:complete_contrac} and Corollary \ref{cor:converse_pd_G_tree} have important consequences, namely: 
\begin{enumerate}
\item Contrary to the case where the function is also applied to the diagonal elements of the matrix (see Theorem \ref{thm:hj}), Corollary \ref{cor:complete_contrac} shows that, when the diagonal is left untouched, preserving every $n \times n$ positive semidefinite matrix does not imply any differentiability condition on $f$. Even continuity is not required. We therefore note the stark differences compared with previous results in the area. 
\item Proposition \ref{prop:degree_contraction} shows that preserving positive definiteness is relatively easier for matrices that are already very sparse in term of connectivity, i.e., matrices with bounded vertex degree. 
\item Corollary \ref{cor:complete_contrac} suggests that preserving positive definiteness for non-sparse matrices becomes increasingly difficult as the dimension $n$ gets larger. 
\end{enumerate}
\end{remark}

\subsection{Characterization of functions preserving positive definiteness for trees}
Recall that a class of sparse positive definite matrices that is always guaranteed to retain positive definiteness upon either hard or soft-thresholding is the class of matrices with zeros according to a tree (see Theorems \ref{Th_hard_thresholding_level_ep} and \ref{Th_Princ_Soft_Thres}). A natural question to ask therefore is whether functions other than hard and soft-thresholding can also retain positive definiteness. Recall from Lemma \ref{lem:contraction} that for every non-empty graph $G$, the functions $f$ such that $f^*[A] \in \mathbb{P}_G^+$ for every $A \in \mathbb{P}_G^+$ are necessarily contained in the family
\begin{equation}
\mathscr{C} := \{ f : \mathbb{R} \to \mathbb{R}\ :\ |f(x)| \leq |x|\ \forall x \in \mathbb{R}\}. 
\end{equation}
Note that $\mathscr{C}$ is the class of functions contracting at the origin. This ``shrinkage'' property is often required in practice. 

It is natural to ask if we can characterize the set of graphs $G$ for which the functions mapping $\mathbb{P}_G^+$ into itself constitute all of $\mathscr{C}$. The following theorem answers this question.
 
\begin{theorem}\label{thm:pd_G_tree}
Let $G = (V,E)$ be a graph. Then
\begin{equation}
\{f: \mathbb{R} \rightarrow \mathbb{R} : f^*[A] \in \mathbb{P}_G^+ \textrm{ for every } A \in \mathbb{P}_G^+\} = \mathscr{C}
\end{equation}
 if and only if $G$ is a tree. 
\end{theorem}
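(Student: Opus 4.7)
The plan is to prove the biconditional by combining three ingredients. First, Lemma~\ref{lem:contraction} already gives the containment $\{f : f^*[A] \in \mathbb{P}_G^+\ \forall\, A \in \mathbb{P}_G^+\} \subseteq \mathscr{C}$ whenever $G$ has at least one edge, and setting $x=0$ in the defining inequality $|f(x)|\leq|x|$ of $\mathscr{C}$ forces $f(0)=0$, so that $f^*[A]$ automatically inherits the off-diagonal zero pattern of $A$. The question therefore reduces to when the reverse containment $\mathscr{C} \subseteq \{f : f^*[A] \in \mathbb{P}_G^+\ \forall\, A \in \mathbb{P}_G^+\}$ holds. Second, if $G$ is connected and not a tree, the soft-thresholding function $f_\epsilon^S(x) = \sgn(x)(|x|-\epsilon)_+$ lies in $\mathscr{C}$ (clearly $|f_\epsilon^S(x)|\leq|x|$), yet by Theorem~\ref{Th_Princ_Soft_Thres} it fails to preserve $\mathbb{P}_G^+$; this witnesses the failure of the reverse containment and yields one direction of the ``iff''.

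For the converse, suppose $G$ is a tree on $n$ vertices and argue by induction on $n$. The base case $n=2$ is immediate: for $A=\bigl(\begin{smallmatrix}a & b\\ b & c\end{smallmatrix}\bigr)>0$, the determinant of $f^*[A]$ is $ac-f(b)^2\geq ac-b^2>0$. For the inductive step, choose a leaf $v$ with unique neighbor $u$ and order the vertices so that $v$ is last, giving the block form
\begin{equation*}
A = \begin{pmatrix} B & a_{uv}\, e_u \\ a_{uv}\, e_u^T & a_{vv} \end{pmatrix}.
\end{equation*}
Because $v$ is a leaf, the Schur complement of $a_{vv}$ in $A$ equals $B - (a_{uv}^2/a_{vv})\, e_u e_u^T$, which differs from $B$ only at the $(u,u)$-entry and therefore still lies in $\mathbb{P}_{G\setminus v}^+$, where $G\setminus v$ is a tree on $n-1$ vertices. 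The Schur complement of $a_{vv}$ in $f^*[A]$ admits the identity
\begin{equation*}
f^*[B] - \frac{f(a_{uv})^2}{a_{vv}}\, e_u e_u^T \;=\; f^*\!\left[B - \frac{a_{uv}^2}{a_{vv}}\, e_u e_u^T\right] + \frac{a_{uv}^2 - f(a_{uv})^2}{a_{vv}}\, e_u e_u^T,
\end{equation*}
whose first summand is positive definite by the inductive hypothesis applied to $f \in \mathscr{C}$ and the tree $G\setminus v$, and whose second summand is a nonnegative rank-one diagonal correction since $|f(a_{uv})|\leq|a_{uv}|$. Hence $f^*[A]>0$, and combined with $f(0)=0$ we obtain $f^*[A]\in \mathbb{P}_G^+$.

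The main obstacle, and the true heart of the argument, is this Schur-complement reduction. The point is that when $v$ is a leaf, the rank-one update $cc^T/a_{vv}$ arising from eliminating $v$ is supported at the single diagonal position $(u,u)$, so it only modifies one diagonal entry and leaves the off-diagonal zero pattern of a smaller tree intact, which is exactly what allows the induction to close. If $v$ had two or more neighbors this update would populate genuinely new off-diagonal entries, and the induction would break; this breakdown is precisely what is witnessed by the cycle counterexamples constructed in the proof of Theorem~\ref{Th_Princ_Soft_Thres}.
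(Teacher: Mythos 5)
Your proposal is correct and follows essentially the same route as the paper: soft-thresholding together with Theorem~\ref{Th_Princ_Soft_Thres} for the ``only if'' direction, and a leaf-removal induction in which the Schur complement of the leaf's diagonal entry in $f^*[A]$ is written as $f^*$ of the corresponding Schur complement in $A$ plus a nonnegative diagonal correction coming from $|f(a_{uv})|\leq|a_{uv}|$. Your explicit identity is exactly the paper's decomposition $f^*[S]+D$, and your observation that $f(0)=0$ (forced by membership in $\mathscr{C}$) is what preserves the zero pattern is a welcome, if minor, clarification of a point the paper leaves implicit.
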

\noindent
Thus, the result provides a complete characterization of trees in terms of the maximal family $\mathscr{C}$.

\begin{proof}
($\Leftarrow$) Let $G$ be a tree and assume $|f(x)| \leq |x|$ for all $x$. We will prove that $f^*[A] \in \mathbb{P}_G^+$ for every $A \in \mathbb{P}_G^+$ by induction on $n=|V|$. Consider first the case $n=3$. Then $G$ is equal to the $A_3$ graph with 3 vertices

\begin{center}
\includegraphics[width=3cm]{./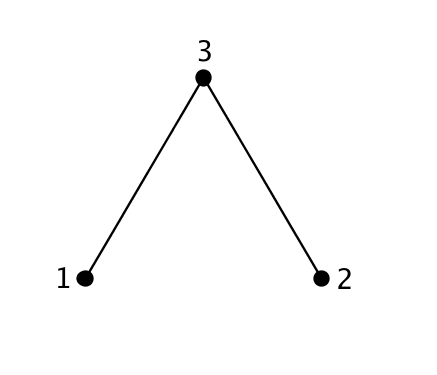}
\end{center}
and $A$ can be reconstituted as follows:  
\begin{equation}
A = \left(\begin{array}{ccc}
\alpha_1 & 0 & a \\
0 & \alpha_2 & b \\
a & b & \alpha_3 
\end{array}\right)
\end{equation} 
with $\alpha_1, \alpha_2, \alpha_3 > 0$. Assume $A > 0$. By computing the determinants of the principal minors, the positive definiteness of $A$ is equivalent to 
\begin{equation}
\alpha_1(\alpha_2\alpha_3-b^2) - \alpha_2 a^2 = \alpha_1 \alpha_2 \alpha_3 - \alpha_1 b^2 - \alpha_2 a^2> 0 . 
\end{equation}
Since $|f(x)| \leq |x|$, it follows that 
\begin{align}
\alpha_1(\alpha_2\alpha_3-f(b)^2) - \alpha_2 f(a)^2 &= \alpha_1\alpha_2\alpha_3 - \alpha_1 f(b)^2 - \alpha_2 f(a)^2 \\
&\geq \alpha_1 \alpha_2 \alpha_3 - \alpha_1 b^2 - \alpha_2 a^2 > 0, 
\end{align}
and so $f^*[A] > 0$. The result is therefore true for $n=3$. 

Assume the result is true for every tree with $n$ vertices and consider a tree $G$ with $n+1$ vertices. Let $\tilde{G}$ be a sub-tree obtained by removing a vertex connected to only one other node. Without loss of generality, assume this vertex is labeled $n+1$ and its neighbor is labeled $n$. Let $A \in \mathbb{P}_G^+$. The matrix $A$ has the form 
\begin{equation}
A = \left(\begin{array}{cccccc}
& & & & & \mc{0} \\
& & & & & \mc{0} \\
& & \widetilde{A} & & & \mc{\vdots} \\
& & & & & \mc{0} \\
& & & & & \mc{a} \\
\cline{1-6}
 0& 0&  \dots &0 & a &  \mc{\alpha}
\end{array}\right).
\end{equation}

By the induction hypothesis, the $n \times n$ principal submatrix $\widetilde{A}$ of $A$ stays positive definite when $f$ is applied to its off-diagonal elements, i.e., $f^*[\widetilde{A}] > 0$. It remains to be shown that the Schur complement of $\alpha$ in $f^*[A]$ is positive definite. Note first that the Schur complement of $\alpha$ in $A$ is given as: 
\begin{equation}
S = \widetilde{A} - \left(\begin{array}{ccc}
0 & \dots & 0 \\
\vdots & \ddots & \vdots \\
0 & \dots & \frac{a^2}{\alpha} 
\end{array}\right). 
\end{equation}
Since by assumption $A > 0$, we have $S > 0$.  We also have $S \in \mathbb{P}_{\widetilde{G}}^+$. Therefore, by the induction hypothesis, $f^*[S] > 0$. Note that $f^*[S]$ is different from the Schur complement of $\alpha$ in $f^*[A]$. More specifically, 
\begin{equation}
f^*[S] = f^*[\widetilde{A}] - \left(\begin{array}{ccc}
0 & \dots & 0 \\
\vdots & \ddots & \vdots \\
0 & \dots & \frac{a^2}{\alpha} 
\end{array}\right), 
\end{equation}
but the Schur complement of $\alpha$ in $f^*[A]$ is given by 
\begin{equation}
f^*[\widetilde{A}] - \left(\begin{array}{ccc}
0 & \dots & 0 \\
\vdots & \ddots & \vdots \\
0 & \dots & \frac{f(a)^2}{\alpha} 
\end{array}\right).
\end{equation}
Since $|f(x)| \leq |x|$ by assumption, it follows that the Schur complement of $\alpha$ in $f^*[A]$ is equal to $f^*[S] + D$ where $D$ is a diagonal matrix with non-negative entries. Since $f^*[S] > 0$, the Schur complement of $\alpha$ in $f^*[A]$ is positive definite, and therefore, so is $f^*[A]$. This completes the induction. 

($\Rightarrow$) Conversely, assume now that $G$ is not a tree and let $\epsilon > 0$. Then, by Theorem \ref{Th_Princ_Soft_Thres}, there exists a matrix $A \in \mathbb{P}_G^+$ such that $(f_\epsilon^S)^*[A] \not\in \mathbb{P}_G^+$, where $f_\epsilon^S$ denotes the soft-thresholding function (see \eqref{eqn:soft_thres}). This concludes the proof. 
\end{proof}

\begin{remark}
A similar result also holds for hard-thresholding with respect to a graph. Indeed, note that every subgraph of a graph $G$ is a union of disconnected induced subgraphs if and only if $G$ is a tree. As a consequence, matrices in $\mathbb{P}_G^+$ are guaranteed to retain positive definiteness when thresholded with respect to any subgraph of $G$ if and only if $G$ is a tree (see Theorem \ref{Th_Hard_Thres_General} and \cite[Corollary 3.5]{Guillot_Rajaratnam2012}). Hence, trees can be characterized by all four types of thresholding operations that have been considered: 1) graph thresholding, 2) hard-thresholding, 3) soft-thresholding, and 4) general thresholding.  
\end{remark}

\begin{remark}
Though Theorem \ref{thm:pd_G_tree} establishes that the class $\mathscr{C}$ is maximal when $G$ is a tree, it is nevertheless important to recognize that even when $G$ is not a tree, there are sparsity inducing functions which retain positive definiteness for all $A \in \mathbb{P}_G^+$ (see Corollary \ref{cor:converse_pd_G_tree})
\end{remark}

\subsection{Proof of the main result}
 We now proceed to completely characterize the functions $f$ preserving positive definiteness for matrices of arbitrary dimension, when the diagonal is not thresholded. 

\begin{theorem}\label{thm:carac_diag_not_thres}
Let $0 < \alpha \leq \infty$ and let $f: (-\alpha, \alpha) \rightarrow \mathbb{R}$. Then $f^*[A]$ is positive semidefinite for every symmetric positive semidefinite matrix $A$ with entries in $(-\alpha, \alpha)$ if and only if $f(x) = x g(x)$ where: 
\begin{enumerate}
\item $g$ is analytic on the disc $D(0,\alpha)$; 
\item $\|g\|_\infty \leq 1$;
\item $g$ is absolutely monotonic on $(0, \alpha)$. 
\end{enumerate}
When $\alpha = \infty$, the only functions satisfying the above conditions are the affine functions $f(x) = ax$ for $0 \leq a \leq 1$.  
\end{theorem}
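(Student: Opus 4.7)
The plan is to reduce both directions of the theorem to the classical Theorem \ref{thm:characterization_diag_thres}, which handles the case when $f$ is also applied to the diagonal. Sufficiency is the easy direction: if $f(x)=xg(x)$ with the three stated properties, then Remark \ref{rem:abs_mon_taylor} gives $g(x)=\sum_{n\geq 0}c_n x^n$ with $c_n\geq 0$ convergent on $D(0,\alpha)$, so $f(x)=\sum_{n\geq 1}c_{n-1}x^n$ is analytic on $D(0,\alpha)$ and absolutely monotonic on $(0,\alpha)$. Theorem \ref{thm:characterization_diag_thres} then delivers $f[A]\geq 0$, and the bound $\|g\|_\infty\leq 1$ (equivalently $|f(x)|\leq |x|$) ensures the diagonal correction $D_A=\diag(a_{ii}-f(a_{ii}))$ is nonnegative since $a_{ii}\geq 0$, so $f^*[A]=f[A]+D_A\geq 0$.

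The necessity direction hinges on upgrading the hypothesis ``$f^*[A]\geq 0$'' to the stronger conclusion ``$f[A]\geq 0$'', after which Theorem \ref{thm:characterization_diag_thres} finishes the structural part. The key construction is an amplification by Kronecker product: for any positive semidefinite $A$ with entries in $(-\alpha,\alpha)$ and any integer $k\geq 1$, I would test the hypothesis on $\tilde A:=A\otimes J_k$, where $J_k$ is the $k\times k$ all-ones matrix. Since $J_k\geq 0$ and entries of $\tilde A$ coincide with those of $A$, $\tilde A$ is a legal test matrix, and a direct identification gives
\[
f^*[\tilde A]=f[A]\otimes J_k+D_A\otimes I_k.
\]
Diagonalizing $J_k$ (whose eigenvalues are $k$ and $0$ with multiplicity $k-1$) decomposes this orthogonally into one copy of $kf[A]+D_A$ and $k-1$ copies of $D_A$, so $f^*[\tilde A]\geq 0$ forces $kf[A]+D_A\geq 0$. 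Dividing by $k$ and letting $k\to\infty$ gives $f[A]\geq 0$ by closedness of the positive semidefinite cone.

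With $f[A]\geq 0$ established, Theorem \ref{thm:characterization_diag_thres} yields $f(x)=\sum_{n\geq 0}c_n x^n$ with $c_n\geq 0$, analytic on $D(0,\alpha)$. A simple $2\times 2$ test matrix (as in the proof of Lemma \ref{lem:contraction}, which adapts directly to entries bounded by $\alpha$) shows $|f(x)|\leq |x|$ on $(-\alpha,\alpha)$, so $c_0=f(0)=0$ and I can write $f(x)=xg(x)$ with $g(x)=\sum_{n\geq 0}c_{n+1}x^n$ analytic on $D(0,\alpha)$ and absolutely monotonic on $(0,\alpha)$. The bound $|g(z)|\leq g(|z|)=f(|z|)/|z|\leq 1$ for $z\in D(0,\alpha)\setminus\{0\}$, extended by continuity at the origin, gives $\|g\|_\infty\leq 1$. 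When $\alpha=\infty$, $g$ becomes a bounded entire function, hence constant by Liouville; the nonnegativity and norm bound pin the constant in $[0,1]$, giving $f(x)=ax$ with $0\leq a\leq 1$.

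The main obstacle is discovering the amplification $\tilde A=A\otimes J_k$. Intuitively, tensoring by $J_k$ multiplies the off-diagonal contribution to $f^*[\tilde A]$ by the nonzero eigenvalue $k$ of $J_k$, while the diagonal correction $D_A$ is merely tensored by the identity, so that in the limit the diagonal obstruction vanishes and the hypothesis on $f^*$ becomes indistinguishable from a hypothesis on $f$. Once this trick is identified, every remaining step is a direct appeal to earlier results or a standard power-series/Liouville argument.
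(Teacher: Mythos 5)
Your proof is correct, and both the sufficiency direction and the endgame of necessity (power-series form from Theorem~\ref{thm:characterization_diag_thres}, the contraction bound $|f(x)|\leq|x|$ from a $2\times 2$ test as in Lemma~\ref{lem:contraction}, the bound $|g(z)|\leq g(|z|)\leq 1$, and Liouville when $\alpha=\infty$) match the paper's. The heart of necessity also shares the paper's key idea --- Kronecker amplification by the all-ones matrix, so that the off-diagonal contribution scales like $k$ while the diagonal correction $D_A$ stays bounded --- but your route to $f[A]\geq 0$ is genuinely more direct. The paper first proves the auxiliary inequality $f^*[A]\geq 2f[B]-f^*[B]$ for all $A\geq B\geq 0$ by a congruence applied to the block matrix $\left(\begin{smallmatrix}A & B\\ B & B\end{smallmatrix}\right)$, then tensors both sides with $1_m$, estimates via Weyl's inequality, and lets $m\to\infty$ to obtain the monotonicity statement $f[A]\geq f[B]$, which it finally specializes to $B=0$. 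You observe that only the case $B=0$ is needed and that $f^*[A\otimes J_k]=f[A]\otimes J_k+D_A\otimes I_k$ block-diagonalizes exactly, via the spectral decomposition of $J_k$, into one copy of $kf[A]+D_A$ and $k-1$ copies of $D_A$; hence $f[A]\geq -\tfrac{1}{k}D_A\to 0$. This replaces the paper's Weyl estimate with an exact decomposition and dispenses with the block-matrix lemma entirely, at the cost of not recording the (stronger but unneeded) operator monotonicity of $A\mapsto f[A]$ that the paper's argument yields along the way. The only detail worth writing out explicitly is the adaptation of Lemma~\ref{lem:contraction} to the bounded interval: replace the diagonal entry $|\xi|+\epsilon/2$ by $|\xi|+\tfrac{1}{2}\min(\epsilon,\alpha-|\xi|)$ so the witness matrix has entries in $(-\alpha,\alpha)$ while its image under $f^*$ still has negative determinant; as you say, this is immediate.
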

\begin{proof}
($\Rightarrow$) An important first step in the proof entails showing that if $f^*[A]$ is positive semidefinite for all symmetric positive semidefinite matrices $A$ with entries in $(-\alpha, \alpha)$ (all dimensions), then the same is true for $f[A]$. Let $n \geq 2$ and let $A$ be a symmetric positive semidefinite matrix with entries in $(-\alpha, \alpha)$. Let $D_A$ denote the diagonal matrix 
\begin{equation}
D_A = \diag\left(a_{11}-f(a_{11}), \dots, a_{nn} - f(a_{nn})\right).
\end{equation} 
Note that 
\begin{equation}\label{eqn:f_f_star_rel}
f^*[A] = f[A] + D_A. 
\end{equation}
Denoting by $1_m$ the $m \times m$ matrix with every entry equal to $1$, we obtain for every $m \geq 1$, 
\begin{equation}
f^*[1_m \otimes A] \geq 0. 
\end{equation}
Equivalently, using \eqref{eqn:f_f_star_rel}, 
\begin{equation}\label{eqn:f_1m_plus}
f^*[1_m \otimes A] = f[1_m \otimes A] + I_m \otimes D_A \geq 0, 
\end{equation}
where $I_m$ denotes the $m \times m$ identity matrix. Note that $f[1_m \otimes A] = 1_m \otimes f[A]$. Recall that the eigenvalues of the Kronecker product of two matrices are given by the product of their respective eigenvalues. Therefore, if the eigenvalues of $f[A]$ are given by $\{\lambda_1, \dots, \lambda_n\}$, then the eigenvalues of $f[1_m \otimes A]$ are given by $\{0, m \lambda_1, \dots, m \lambda_n\}$. Applying Weyl's inequality to \eqref{eqn:f_1m_plus}, we obtain 
\begin{equation}
m \lambda_\textrm{min}(f[A]) + \lambda_\textrm{max}(I_m \otimes D_A) \geq \lambda_\textrm{min}(f[1_m \otimes A] + I_m \otimes D_A) \geq 0. 
\end{equation}
Equivalently, 
\begin{equation}
m \lambda_\textrm{min}(f[A]) \geq - \max_{i=1, \dots, n} (a_{ii} - f(a_{ii})). 
\end{equation}
Dividing both sides by $m$ and letting $m \rightarrow \infty$, it follows that $f[A]$ is positive semidefinite for every symmetric positive semidefinite $n \times n$ matrix $A$ with entries in $(-\alpha, \alpha)$.  
Hence, by Theorem \ref{thm:characterization_diag_thres}, $f$ is analytic on $D(0,\alpha)$ and is absolutely monotonic on $(0,\alpha)$, i.e., $f^{(k)}(0) \geq 0$ for every $k \geq 0$. In other words, 
\begin{equation}
f(z) = \sum_{k=1}^\infty a_k z^k
\end{equation}
where $a_k := f^{(k)}(0)/k! \geq 0$. Finally, since $f$ satisfies $|f(x)| \leq |x|$ (see Lemma \ref{lem:contraction}), the function $g$ defined by $g(0) = 0$ and 
\begin{equation}
g(x) := \frac{f(x)}{x} = \sum_{k=1}^\infty a_k x^{k-1} \qquad (x \not= 0), 
\end{equation}
satisfies $|g(x)| \leq 1$ for every $x$, i.e., $\|g\|_\infty \leq 1$. Therefore, $f(x) = x g(x)$ for a function $g$ that is analytic on $D(0,\alpha)$, absolutely monotonic on $(0,\alpha)$, and satisfies the condition $\|g\|_\infty \leq 1$. 

($\Leftarrow$) Conversely, assume $f(x) = x g(x)$ for some function $g$ analytic function on $D(0,\alpha)$,  absolutely monotonic on $(0,\alpha)$, and satisfying $\|g\|_\infty \leq 1$. Then from Theorem \ref{thm:characterization_diag_thres}, $f[A] \geq 0$ for every $A \geq 0$ with entries in $(-\alpha,\alpha)$. Recall that $f^*[A] = f[A] + D_A$ where $D_A$ is the diagonal matrix
\begin{equation}
D_A = \diag(a_{11}-f(a_{11}), \dots, a_{nn} - f(a_{nn})). 
\end{equation}
Since $\|g\|_\infty \leq 1$, then $|f(x)| \leq |x|$ and thus the elements of $D$ are non-negative. Hence, $f^*[A] \geq 0$ for every $A \geq 0$ with entries in $(-\alpha,\alpha)$. 

In the case when $\alpha = \infty$, the only bounded absolutely monotonic functions $g$ on $(0,\infty)$ are the constant functions $g(x) \equiv a$ for some $a \geq 0$. Since $|f(x)| \leq |x|$ we must have $0 \leq a \leq 1$. This completes the proof of the theorem. 
\end{proof}

\begin{remark}
We now compare and contrast Theorem \ref{thm:carac_diag_not_thres} to the analogous results proved in the literature, namely Theorem \ref{thm:characterization_diag_thres}. 
\begin{enumerate}
\item Recall that, for matrices $A \geq 0$ of a given dimension, if $|f(x)| \leq |x|$ and $f[A] \geq 0$, then $f^*[A] \geq 0$. The converse is generally false (see Remarks \ref{rem:f_and_f_star} and \ref{rem:consequences}). 
\item In contrast, for $A \geq 0$ of arbitrary dimension, the above proof shows that $f[A] \geq 0$ and $|f(x)| \leq |x|$ if and only if $f^*[A] \geq 0$. 
\end{enumerate}
\end{remark}

Theorem \ref{thm:carac_diag_not_thres} shows that only a very narrow class of functions are guaranteed to preserve positive definiteness for an arbitrary positive definite matrix of any dimension. In practical applications, thresholding is often performed on \emph{normalized} matrices (such as correlation matrices) which have bounded entries. In that case, more functions preserve positive definiteness. However, as in the case where the function is applied to the diagonal, the following result shows that no thresholding function can induce sparsity (i.e., set non-zero elements to zero) and, at the same time, be guaranteed to maintain positive definiteness for matrices of every dimension.

\begin{corollary}\label{cor:induce_sparsity_diag}
Let $0 < \alpha \leq \infty$ and let $f: (-\alpha, \alpha) \rightarrow \mathbb{R}$ satisfy $f(0) = 0$ and $f(\gamma) = 0$ for some $\gamma \in (0, \alpha)$. Assume $f \not\equiv 0$ on $(-\alpha, \alpha)$. Then there exists a symmetric positive semidefinite matrix $A$ with entries in $(-\alpha, \alpha)$ such that $f^*[A]$ is not positive semidefinite. 
\end{corollary}
\begin{proof}
The proof is the same as the proof of Corollary \ref{cor:induce_sparsity}. 
\end{proof}

\section{Eigenvalue inequalities}\label{sec:eig_inequalities}

The results of Section \ref{sec:gen_thres} show that only a restricted class of functions are guaranteed to preserve positive definiteness when applied elementwise to matrices of arbitrary dimension. Moreover, no function can at the same time induce sparsity (have zeros other than at the origin) and simultaneously preserve positive definiteness for every matrix. Hence, a natural question to ask is whether certain properties of matrices (such as a lower bound on the minimum eigenvalue or an upper bound on the condition number) are sufficient to maintain positive definiteness when a given function $f$ is applied to the off-diagonal elements of the matrix. We provide such sufficient conditions in this section. The results are first derived in Section \ref{subsec:pol_results} for the case when $f$ is a polynomial. They are then extended to more general functions in the subsequent subsection.

\subsection{Bounds for polynomials}\label{subsec:pol_results}

We first establish some notation. For a polynomial $p(x) = \sum_{i=0}^d a_i x^i$, define its ``positive'' and ``negative'' parts by: 
\begin{equation}
p_+(x) = \sum_{a_i > 0} a_i x^i, \qquad p_-(x) = -\sum_{a_i < 0} a_i x^i.
\end{equation}

Many of the results in this section are motivated by the following idea. Note that 
\begin{equation}
p^*[A] = p_+[A] - p_-[A] + D_A
\end{equation}
where $D$ is the diagonal matrix $D_A = \diag(a_{11} - p(a_{11}), \dots, a_{nn} - p(a_{nn}))$. Repeated applications of the Schur product theorem can be used to show that both $p_+[A]$ and $p_-[A]$ are positive definite when $A$ is symmetric positive definite. Intuitively, a polynomial with a positive part that is ``larger'' than its negative part should be able to preserve positive definiteness for a wider class of matrices as compared to a polynomial with a ``large'' negative part. This idea is formalized in Proposition \ref{prop:bound} below. Before stating the result, recall the following classical result that can be used to bound the eigenvalues of Schur products. 

\begin{theorem}[Schur \cite{Schur1911}]
Let $A, B \in \mathbb{P}_n^+$. Then for $i=1, \dots, n$,  
\begin{equation}
\lambda_{\textrm{min}}(A) \min_{k=1, \dots, n} b_{kk} \leq \lambda_i(A \circ B) \leq \lambda_{\textrm{max}}(A) \max_{k=1, \dots, n} b_{kk}. 
\end{equation}
\end{theorem}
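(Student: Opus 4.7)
The plan is to reduce both bounds to two applications of the Schur product theorem, using the elementary identity $(cI) \circ B = c \,\mathrm{diag}(B)$ and the standard correspondence between Löwner inequalities and extreme eigenvalues.

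First I would set $D_B := \mathrm{diag}(b_{11}, \ldots, b_{nn})$ and observe the key identity
\begin{equation*}
(A - cI) \circ B = A \circ B - cD_B
\end{equation*}
for any scalar $c$, which is immediate from the entrywise definition of the Hadamard product (since $I \circ B = D_B$). The point is that the corrective term is diagonal, so it can be controlled cleanly.

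Next I would instantiate this identity at $c = \lambda_{\min}(A)$ and $c = \lambda_{\max}(A)$. Since $A - \lambda_{\min}(A) I \geq 0$ and $\lambda_{\max}(A) I - A \geq 0$, and since $B \geq 0$ by hypothesis, the Schur product theorem gives
\begin{equation*}
A \circ B - \lambda_{\min}(A)\, D_B = (A - \lambda_{\min}(A) I) \circ B \geq 0,
\end{equation*}
and analogously $\lambda_{\max}(A) D_B - A \circ B \geq 0$. Combined with the trivial diagonal sandwich $\min_i b_{ii}\, I \leq D_B \leq \max_i b_{ii}\, I$, this yields the two-sided Löwner inequality
\begin{equation*}
\lambda_{\min}(A) \min_i b_{ii} \cdot I \;\leq\; A \circ B \;\leq\; \lambda_{\max}(A) \max_i b_{ii} \cdot I.
\end{equation*}

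Finally, I would invoke the standard fact that $M \geq cI$ forces every eigenvalue of the symmetric matrix $M$ to be at least $c$ (and dually $M \leq cI$ forces every eigenvalue to be at most $c$), which applied to $M = A \circ B$ gives the desired bounds on each $\lambda_i(A \circ B)$. There is no real obstacle here; the only thing to be careful about is to recognize that the slack term $D_B$ produced by the Hadamard identity is diagonal, so that it can be absorbed into a scalar multiple of the identity, which is precisely what converts a matrix inequality into a uniform eigenvalue bound.
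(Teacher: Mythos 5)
Your proof is correct. The paper does not prove this statement at all --- it is quoted as a classical result of Schur with a citation to the 1911 paper --- so there is nothing to compare against; your argument is the standard self-contained one, and the identity $(A-cI)\circ B = A\circ B - cD_B$ together with two applications of the Schur product theorem does exactly what is needed. Two small points you use implicitly but which are immediate under the hypotheses: you need the positive \emph{semi}definite form of the Schur product theorem, since $A-\lambda_{\min}(A)I$ is singular (this follows from the same Kronecker-product argument the paper sketches), and when you pass from $D_B \geq \min_i b_{ii}\, I$ to $\lambda_{\min}(A)D_B \geq \lambda_{\min}(A)\min_i b_{ii}\, I$ you are multiplying a L\"owner inequality by the scalar $\lambda_{\min}(A)$, which is legitimate only because $A>0$ makes that scalar positive (and likewise for $\lambda_{\max}(A)$). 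Neither is a gap, but both deserve a word in a written-out version.
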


\begin{corollary}\label{cor:schur}
Let $A \in \mathbb{P}_n^+$. Denote by $d_{\textrm{min}}$ and $d_\textrm{max}$ the minimal and maximal diagonal element of $A$ respectively. Then for $i=1,\dots, n$, 
\begin{equation}
d_\textrm{min}^{k-1} \lambda_{\textrm{min}}(A) \leq \lambda_i(A^{\circ k}) \leq d_\textrm{max}^{k-1} \lambda_{\textrm{max}}(A), 
\end{equation}
where $A^{\circ k}$ denotes the Hadamard product of $k$ copies of $A$. 
\end{corollary}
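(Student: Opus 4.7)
The plan is to deduce this corollary as an immediate consequence of the Schur eigenvalue bound stated just above, by taking $B := A^{\circ(k-1)}$. First I would note that the corollary is intended for $A$ positive definite (as indicated by the reference to $\lambda_{\textrm{min}}(A)$ and the sharp form of the bounds); under this hypothesis the Schur product theorem yields that $A^{\circ j}$ is positive definite for every $j \geq 1$, by an easy induction on $j$. In particular $A^{\circ(k-1)}$ is an admissible choice for $B$ in the preceding theorem.

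Next I would compute the diagonal of $A^{\circ(k-1)}$. Since the Hadamard product acts entrywise, the diagonal entries of $A^{\circ(k-1)}$ are exactly $(a_{ii})^{k-1}$ for $i = 1, \dots, n$. Because $A$ is positive definite we have $a_{ii} > 0$ for every $i$, so in particular $d_\textrm{min} > 0$; the monotonicity of $t \mapsto t^{k-1}$ on $(0,\infty)$ then gives
\[ \min_i (A^{\circ(k-1)})_{ii} = d_\textrm{min}^{k-1}, \qquad \max_i (A^{\circ(k-1)})_{ii} = d_\textrm{max}^{k-1}. \]
Plugging $B = A^{\circ(k-1)}$ into the Schur inequality and using $A \circ A^{\circ(k-1)} = A^{\circ k}$ then yields the desired bound
\[ d_\textrm{min}^{k-1}\, \lambda_\textrm{min}(A) \leq \lambda_i(A^{\circ k}) \leq d_\textrm{max}^{k-1}\, \lambda_\textrm{max}(A) \]
for every $i = 1, \dots, n$.

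No substantive obstacle arises: the proof is essentially a one-line reduction to the preceding theorem, the only observation being that the diagonal of a Hadamard power is the corresponding power of the diagonal. The only mild subtlety worth spelling out explicitly is the positivity of $d_\textrm{min}$, which is needed to preserve the sense of the inequalities when raising the diagonal entries to the power $k-1$.
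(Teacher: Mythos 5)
Your proof is correct and is exactly the intended argument: the paper states this as an immediate corollary of Schur's eigenvalue bound, and taking $B = A^{\circ(k-1)}$ (whose diagonal is the $(k-1)$-st power of the diagonal of $A$, all entries positive since $A$ is positive definite) gives the stated inequalities directly. The observation about $d_\textrm{min} > 0$ is the right detail to flag, and nothing further is needed.
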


We now proceed to state the main result of this subsection. 
\begin{proposition}\label{prop:bound}
Let $p$ be a polynomial and assume $p(0) = 0$. Let $A \in \mathbb{P}_n^+$, and denote by $d_\textrm{min}$ and $d_\textrm{max}$ the smallest and largest diagonal elements of $A$ respectively. Then: 
\begin{enumerate}
\item $\displaystyle\lambda_{\textrm{min}}(p^*[A]) \geq p_+(\lambda_{\textrm{min}}(A)) - p_-(\lambda_{\textrm{max}}(A)) + \min_{i=1,\dots,n} (a_{ii}-p(a_{ii}))$; 
\item $\displaystyle\lambda_\textrm{min}(p^*[A]) \geq \lambda_{\textrm{min}}(A) \frac{p_+(d_{\textrm{min}})}{d_\textrm{min}} - \lambda_{\textrm{max}}(A) \frac{p_-(d_{\textrm{max}})}{d_\textrm{max}} + \min_{i=1, \dots, n} (a_{ii} - p(a_{ii}))$. 
\end{enumerate}
\end{proposition}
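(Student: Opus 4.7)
The natural starting point is the decomposition
\[
p^*[A] = p_+[A] - p_-[A] + D_A, \qquad D_A = \diag(a_{11}-p(a_{11}),\dots,a_{nn}-p(a_{nn})),
\]
which follows from $p(0) = 0$ and the definition of $p^*[A]$. Since $p_+[A]$, $p_-[A]$ and $D_A$ are all symmetric, Weyl's inequality ($\lambda_{\min}(M+N) \geq \lambda_{\min}(M)+\lambda_{\min}(N)$) yields
\[
\lambda_{\min}(p^*[A]) \geq \lambda_{\min}(p_+[A]) - \lambda_{\max}(p_-[A]) + \min_{i}\bigl(a_{ii}-p(a_{ii})\bigr).
\]
Both bounds in the proposition will then be obtained by bounding $\lambda_{\min}(p_+[A])$ from below and $\lambda_{\max}(p_-[A])$ from above term-by-term in the polynomial expansion $p_\pm[A] = \sum_{i} |a_i|\, A^{\circ i}$, and invoking the two different eigenvalue estimates on Hadamard powers.

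For part (2) this is the most direct: apply Corollary~\ref{cor:schur} to each Hadamard power to get $\lambda_{\min}(A^{\circ i}) \geq d_{\min}^{i-1} \lambda_{\min}(A)$ and $\lambda_{\max}(A^{\circ i}) \leq d_{\max}^{i-1} \lambda_{\max}(A)$. Summing over the positive (resp. negative) coefficients of $p$, and noting that $p(0)=0$ forces every surviving term to have $i \geq 1$, produces
\[
\lambda_{\min}(p_+[A]) \geq \lambda_{\min}(A) \sum_{a_i>0} a_i\, d_{\min}^{i-1} = \lambda_{\min}(A)\,\frac{p_+(d_{\min})}{d_{\min}},
\]
and symmetrically for $p_-$. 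Combined with the Weyl bound, this gives (2).

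For part (1) I need the sharper fact $\lambda_{\min}(A^{\circ k}) \geq \lambda_{\min}(A)^k$ (and the dual upper bound). This is the main step that is not purely formal. I would prove it by induction on $k$: using the Schur eigenvalue inequality with $B = A^{\circ(k-1)}$,
\[
\lambda_{\min}(A^{\circ k}) = \lambda_{\min}\bigl(A \circ A^{\circ(k-1)}\bigr) \geq \lambda_{\min}(A)\cdot \min_i (A^{\circ(k-1)})_{ii} = \lambda_{\min}(A)\,d_{\min}^{k-1},
\]
and then close the induction via the elementary observation $d_{\min} \geq \lambda_{\min}(A)$, which holds because $a_{ii} = e_i^\top A e_i \geq \lambda_{\min}(A)$ for unit vectors $e_i$. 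The analogous argument with $\max$ replacing $\min$ gives $\lambda_{\max}(A^{\circ k}) \leq \lambda_{\max}(A)^k$. Summing $a_i \lambda_{\min}(A^{\circ i})$ over $a_i > 0$ and $(-a_i)\lambda_{\max}(A^{\circ i})$ over $a_i < 0$ then telescopes into $p_+(\lambda_{\min}(A))$ and $p_-(\lambda_{\max}(A))$, respectively, and the Weyl bound completes the proof of (1).

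The only subtlety I anticipate is this Hadamard-power inequality in part (1); the rest is bookkeeping. Note that the two bounds are not comparable in general, since part (1) exchanges the diagonal parameters $d_{\min},d_{\max}$ for spectral ones at the cost of evaluating $p_\pm$ at possibly smaller values---which is precisely why both versions are worth recording.
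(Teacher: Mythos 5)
Your proof is correct, and its overall skeleton is the same as the paper's: Weyl's inequality reduces the problem to termwise eigenvalue bounds on the Hadamard powers $A^{\circ i}$, the diagonal correction $D_A$ is handled identically, and part (2) is read off from Corollary~\ref{cor:schur} exactly as in the paper. The one genuine difference is the justification of the key inequality $\lambda_{\min}(A^{\circ k}) \geq \lambda_{\min}(A)^k$ (and its dual) in part (1). The paper obtains it from Cauchy's interlacing theorem, viewing $A^{\circ k}$ as a principal submatrix of the Kronecker power $A^{\otimes k}$, whose extreme eigenvalues are $\lambda_{\min}(A)^k$ and $\lambda_{\max}(A)^k$ when $A \geq 0$. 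You instead chain Schur's bound $\lambda_{\min}(A^{\circ k}) \geq \lambda_{\min}(A)\, d_{\min}^{k-1}$ with the elementary fact $d_{\min} \geq \lambda_{\min}(A) \geq 0$; this is valid and arguably more self-contained, since Schur's theorem is already quoted, and note it is not really an induction --- a single application of Corollary~\ref{cor:schur} together with $d_{\min} \geq \lambda_{\min}(A)$ suffices (the Schur product theorem supplies positive semidefiniteness of $A^{\circ(k-1)}$). Two small remarks. First, both your argument and the paper's implicitly assume $A$ is positive semidefinite: otherwise $\lambda_{\min}(A)^k$ fails as a lower bound for even $k$ and Schur's inequality does not apply; the statement omits this hypothesis but it is clearly intended. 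Second, your closing claim that the two bounds are ``not comparable in general'' is not quite right: your own derivation gives $d_{\min}^{i-1} \geq \lambda_{\min}(A)^{i-1}$ and $d_{\max}^{i-1} \leq \lambda_{\max}(A)^{i-1}$ for $A \geq 0$, so on the positive semidefinite cone bound (2) always dominates bound (1).
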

\begin{proof}
By Weyl's inequality, 
\begin{equation}
\lambda_\textrm{min}(p[A]) \geq \sum_{a_i > 0} a_i \lambda_{\textrm{min}}(A^{\circ i}) + \sum_{a_i < 0} a_i \lambda_{\textrm{max}}(A^{\circ i}). 
\end{equation}
The first result follows by using Cauchy's interlacing theorem to observe that $\lambda_{\textrm{min}}(A^{\circ i}) \geq \lambda_{\textrm{min}}(A)^i$, $\lambda_{\textrm{max}}(A^{\circ i}) \leq \lambda_{\textrm{max}}(A)^i$, in addition to the fact that $p^*[A] = p[A] + D_A$, where $D_A$ is the diagonal matrix $D_A = \diag(a_{11}-p(a_{11}), \dots, a_{nn} - p(a_{nn}))$.  The second assertion follows by the same argument, but then uses Corollary \ref{cor:schur} to bound the eigenvalues of the Schur product. 
\end{proof}

\begin{corollary}\label{cor:bound1}
Let $n \geq 1$ and let $p$ be a polynomial such that $p(0) = 0$. Let $A \in \mathbb{P}_n^+$ with spectral radius denoted by $\rho(A)$. Then $p^*[A] > 0$ if
\begin{equation}
\min_{i=1,\dots,n} (a_{ii} - p(a_{ii})) \geq p_-(\rho(A)). 
\end{equation}
\end{corollary}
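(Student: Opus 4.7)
The plan is to read this off almost immediately from Proposition \ref{prop:bound}(1). First I would observe that for the statement to make sense as stated one is implicitly taking $A$ to be (at least) positive semidefinite: this is what makes $\rho(A) = \lambda_{\max}(A)$ and ensures $\lambda_{\min}(A) \geq 0$, and it is the only regime in which $p^*[A] > 0$ can reasonably hold.

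Given that, the key observation is that $p_+$ is a polynomial with non-negative coefficients and no constant term (since $p(0) = 0$ means $a_0 = 0$), so $p_+(t) \geq 0$ for all $t \geq 0$. In particular $p_+(\lambda_{\min}(A)) \geq 0$. Substituting $\lambda_{\max}(A) = \rho(A)$ into the bound provided by Proposition \ref{prop:bound}(1) gives
\begin{equation}
\lambda_{\min}(p^*[A]) \;\geq\; p_+(\lambda_{\min}(A)) \;+\; \Bigl[\min_{i}(a_{ii}-p(a_{ii})) - p_-(\rho(A))\Bigr],
\end{equation}
and the bracketed quantity is non-negative by hypothesis, while the leading term $p_+(\lambda_{\min}(A))$ is also non-negative. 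This yields $\lambda_{\min}(p^*[A]) \geq 0$.

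For the strict inequality $p^*[A] > 0$, I would note that if $A$ is strictly positive definite then $\lambda_{\min}(A) > 0$, so $p_+(\lambda_{\min}(A)) > 0$ unless $p_+ \equiv 0$; in the degenerate case $p_+ \equiv 0$ one has $p = -p_-$, and strictness can be traced to the fact that $A > 0$ forces a positive diagonal contribution together with the given hypothesis. In any event, the argument is essentially a one-line specialization.

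There is no real obstacle here: the whole point of Proposition \ref{prop:bound} was to isolate a computable lower bound for $\lambda_{\min}(p^*[A])$, and Corollary \ref{cor:bound1} is simply the observation that discarding the non-negative term $p_+(\lambda_{\min}(A))$ yields a clean sufficient condition phrased only in terms of $\rho(A)$ and the diagonal. The only subtlety worth flagging in the write-up is the replacement of $\lambda_{\max}(A)$ by $\rho(A)$, which requires positive semidefiniteness of $A$.
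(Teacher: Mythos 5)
Your proposal is correct and is exactly the derivation the paper intends (the paper states Corollary \ref{cor:bound1} without proof as an immediate specialization of Proposition \ref{prop:bound}(1)): substitute $\lambda_{\max}(A)=\rho(A)$, drop the nonnegative term $p_+(\lambda_{\min}(A))$, and invoke the hypothesis. Your remarks on the implicit positive (semi)definiteness of $A$ and on strictness are reasonable caveats that the paper itself glosses over.
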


The following surprising result shows that some polynomials having negative coefficients can preserve large classes of positive definite matrices. Recall that a correlation matrix is a symmetric positive definite matrix with ones on the diagonal. 

\begin{corollary}\label{cor:bound2}
Let $n \geq 1$ and let $p$ be a polynomial such that $p(0) = 0$ and $p_-(x) \leq 1-p(1)$ for every $0 \leq x \leq n$. Then $p^*[A] > 0$ for every $n \times n$ correlation matrix $A$.  
\end{corollary}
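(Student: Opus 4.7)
I would reduce the statement directly to Corollary \ref{cor:bound1}, after simplifying the two quantities appearing in its sufficient condition when $A$ is a correlation matrix. The first observation I would record is that, since $a_{ii}=1$ for every $i$, the diagonal correction term collapses to a constant,
\[
\min_{i=1,\dots,n}\bigl(a_{ii}-p(a_{ii})\bigr)\;=\;1-p(1),
\]
independent of both $A$ and $n$.

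Next I would bound the spectral radius. Since $A$ is positive definite with $\mathrm{tr}(A)=n$, its eigenvalues are positive real numbers summing to $n$, and hence $\rho(A)=\lambda_{\max}(A)\in(0,n)$ whenever $n\geq 2$; the case $n=1$ is trivial since then $p^*[A]=A=(1)$. In particular $\rho(A)$ lies in the range $(0,n)$ on which the pointwise hypothesis is assumed, so
\[
p_-(\rho(A))\;\leq\;1-p(1)\;=\;\min_{i}\bigl(a_{ii}-p(a_{ii})\bigr).
\]
This is precisely the sufficient condition in Corollary \ref{cor:bound1}, which then yields $p^*[A]>0$.

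I expect essentially no technical obstacle: the only point requiring care is the strict upper bound $\rho(A)<n$, which holds because the only $n\times n$ correlation matrix attaining $\rho(A)=n$ is the singular all-ones matrix $J_n$, excluded by the positive definiteness of $A$. In this sense the corollary is just a convenient normalized reformulation of the general spectral-radius criterion established in Corollary \ref{cor:bound1}, with the unit diagonal absorbing the ``correction'' term $\min_i(a_{ii}-p(a_{ii}))$ into the simple scalar $1-p(1)$.
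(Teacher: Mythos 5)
Your proposal is correct and follows exactly the paper's route: the paper also deduces the corollary from Corollary \ref{cor:bound1} by observing that $\lambda_{\textrm{max}}(A) < n$ for every $n \times n$ correlation matrix, with the unit diagonal turning $\min_i(a_{ii}-p(a_{ii}))$ into $1-p(1)$. Your write-up merely makes the trace argument for $\rho(A)<n$ explicit, which the paper leaves implicit.
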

\begin{proof}
Note that $\lambda_{\textrm{max}}(A) < n$ for every $n \times n$ correlation matrix $A$ since $\tr(A) = n$ and the eigenvalues of $A$ are all positive. The result follows by Corollary \ref{cor:bound1}. 
\end{proof}

Corollary \ref{cor:cond_numb} below shows that $p^* [A]$ is guaranteed to be positive definite if the condition number of $A$ is sufficiently small. Note that the bound becomes more restrictive as the ``negative part'' of $p$ becomes larger compared to its ``positive part''. 

\begin{corollary}\label{cor:cond_numb}
Let $p$ be a polynomial and assume $|p(x)| \leq |x|$ for every $x \in [-a,a]$, for some $a > 0$. Then $p^*[A] > 0$ for every symmetric positive definite matrix $A$ with entries in $[-a,a]$ such that 
\begin{equation}
\cond(A) := \frac{\lambda_{\textrm{max}}(A)}{\lambda_{\textrm{min}}(A)} \leq \frac{p_+(d_\textrm{min})}{p_-(d_\textrm{max})} \frac{d_\textrm{max}}{d_\textrm{min}}. 
\end{equation}
\end{corollary}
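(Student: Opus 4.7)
The natural plan is to apply part (2) of Proposition \ref{prop:bound} directly and observe that the stated bound on $\cond(A)$ is precisely the algebraic condition that forces the right-hand side of that bound to be non-negative.

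First I would collect a few elementary consequences of the hypothesis. The assumption $|p(x)| \leq |x|$ on $[-a,a]$ applied at $x=0$ yields $p(0)=0$, which is the hypothesis required by Proposition \ref{prop:bound}. Moreover, since $A>0$ has entries in $[-a,a]$, every diagonal entry $a_{ii}$ lies in $(0,a]$, so $|p(a_{ii})| \leq a_{ii}$ and therefore $a_{ii}-p(a_{ii}) \geq 0$ for each $i$. In particular, $\min_i(a_{ii}-p(a_{ii})) \geq 0$.

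Applying Proposition \ref{prop:bound}(2) then gives
\begin{equation*}
\lambda_\textrm{min}(p^*[A]) \;\geq\; \lambda_\textrm{min}(A)\,\frac{p_+(d_\textrm{min})}{d_\textrm{min}} \;-\; \lambda_\textrm{max}(A)\,\frac{p_-(d_\textrm{max})}{d_\textrm{max}} \;+\; \min_i(a_{ii}-p(a_{ii})).
\end{equation*}
The last term is non-negative by the previous paragraph. Multiplying the assumed inequality $\cond(A) \leq p_+(d_\textrm{min})d_\textrm{max}/(p_-(d_\textrm{max}) d_\textrm{min})$ through by the positive quantity $\lambda_\textrm{min}(A)\,p_-(d_\textrm{max})/d_\textrm{max}$ gives exactly
\begin{equation*}
\lambda_\textrm{max}(A)\,\frac{p_-(d_\textrm{max})}{d_\textrm{max}} \;\leq\; \lambda_\textrm{min}(A)\,\frac{p_+(d_\textrm{min})}{d_\textrm{min}},
\end{equation*}
so the first two terms also combine to a non-negative quantity, and the conclusion $\lambda_\textrm{min}(p^*[A]) \geq 0$ follows.

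The only obstacle is the minor technical point of upgrading this bound from $\geq 0$ to the strict inequality $p^*[A]>0$. In the generic case where the hypothesis on $\cond(A)$ holds strictly, the rearrangement above also becomes strict and strict positivity is immediate. At the boundary equality, one can split into cases: if $p_-(d_\textrm{max})=0$ then $p$ has only non-negative coefficients, so $p[A]\geq 0$ by repeated application of the Schur product theorem, and adding the non-negative diagonal correction $D_A$ in the identity $p^*[A]=p[A]+D_A$ gives strict positivity from $A>0$; if $p_-(d_\textrm{max})>0$, the boundary configuration is non-generic and a standard continuity/perturbation argument suffices. None of this affects the main line of the proof, which is essentially one application of Proposition \ref{prop:bound}(2) together with a direct algebraic rearrangement of the hypothesis.
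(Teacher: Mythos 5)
Your argument is exactly the one the paper intends: Corollary \ref{cor:cond_numb} is stated without proof as an immediate consequence of Proposition \ref{prop:bound}(2), obtained by rearranging the condition-number hypothesis and observing that $|p(x)|\leq |x|$ forces $p(0)=0$ and $\min_i (a_{ii}-p(a_{ii}))\geq 0$. The strict-versus-nonstrict point you raise at the end is a genuine imprecision in the paper that the authors silently pass over (and your "continuity/perturbation" fix at the boundary is not really an argument, since $A$ is fixed and continuity only yields $\lambda_{\textrm{min}}(p^*[A])\geq 0$), but your main line is the intended proof.
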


\begin{corollary}\label{cor:cond_numb_corr}
Let $p$ be a polynomial and assume $|p(x)| \leq |x|$ for every $x \in [-1,1]$. Then $p^*[A] > 0$ for every correlation matrix $A$ such that 
\begin{equation}
\cond(A) := \frac{\lambda_{\textrm{max}}(A)}{\lambda_{\textrm{min}}(A)} \leq \frac{p_+(1)}{p_-(1)}. 
\end{equation}
\end{corollary}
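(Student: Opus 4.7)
The plan is to obtain this corollary as a direct specialization of Corollary \ref{cor:cond_numb} to the class of correlation matrices. First I would observe that any $n \times n$ correlation matrix $A$ is positive definite with all diagonal entries equal to $1$ and all entries contained in $[-1,1]$, so the hypotheses of Corollary \ref{cor:cond_numb} are satisfied with $a = 1$.

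Next, since every diagonal entry of $A$ equals $1$, we have $d_{\min} = d_{\max} = 1$. Substituting into the condition number bound of Corollary \ref{cor:cond_numb} gives
\begin{equation*}
\frac{p_+(d_{\min})}{p_-(d_{\max})} \cdot \frac{d_{\max}}{d_{\min}} \;=\; \frac{p_+(1)}{p_-(1)} \cdot 1 \;=\; \frac{p_+(1)}{p_-(1)},
\end{equation*}
which is exactly the bound in the statement. Hence whenever $\cond(A) \leq p_+(1)/p_-(1)$, Corollary \ref{cor:cond_numb} yields $p^*[A] > 0$, as required.

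The only minor subtlety concerns the degenerate case $p_-(1) = 0$, meaning $p$ has no negative Taylor coefficients. In that situation the stated bound should be read as $+\infty$, and the conclusion follows directly from Proposition \ref{prop:bound}(2): the negative contribution drops out, while $\min_i(a_{ii} - p(a_{ii})) \geq 0$ by the hypothesis $|p(x)| \leq |x|$ on $[-1,1]$. There is no real obstacle here, since all of the serious work has already been done in Proposition \ref{prop:bound} and Corollary \ref{cor:cond_numb}; this corollary simply records the particularly clean form of the condition number bound when $A$ is a correlation matrix.
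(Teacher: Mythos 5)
Your proof is correct and matches the paper's (implicit) intent: the corollary is exactly the specialization of Corollary \ref{cor:cond_numb} to correlation matrices, where $d_{\min}=d_{\max}=1$ forces the bound to collapse to $p_+(1)/p_-(1)$. Your remark on the degenerate case $p_-(1)=0$ is a sensible extra precaution and does not change the argument.
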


\subsection{Extension to more general functions}
We now proceed to extend the results of Section \ref{subsec:pol_results} to more general thresholding functions. We first recall the following well-known result. 
\begin{lemma}\label{lemma:approx}
Let $\mathcal{P}^+$ be the set of polynomials with positive coefficients and let $r > 0$. Then the uniform closure of $\mathcal{P}^+$ over $[-r,r]$ is the restriction to $[-r,r]$ of the set of analytic functions $f(z) = \sum_{n \geq 0} a_n z^n$ on the disc $D(0,r) = \{z \in \mathbb{C} : |z| < r\}$ with $a_n \geq 0$ for every $n \geq 0$ and $\sum_{n \geq 0} a_n r^n< \infty$. 
\end{lemma}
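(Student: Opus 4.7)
The plan is to establish both inclusions separately. The reverse inclusion is essentially immediate: given $f(z) = \sum_{n \geq 0} a_n z^n$ with $a_n \geq 0$ and $\sum a_n r^n < \infty$, the partial sums $f_N(x) = \sum_{n=0}^N a_n x^n$ lie in $\mathcal{P}^+$, and for every $x \in [-r,r]$ one has $|f(x)-f_N(x)| \leq \sum_{n > N} a_n |x|^n \leq \sum_{n > N} a_n r^n$, which tends to $0$ as $N \to \infty$ independently of $x$. Hence $f|_{[-r,r]}$ lies in the uniform closure of $\mathcal{P}^+$.

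The substantive work is the forward inclusion: if $(p_k) \subset \mathcal{P}^+$ converges uniformly on $[-r,r]$ to some $f$, I must show that $f$ extends to an analytic function on $D(0,r)$ whose Taylor coefficients $a_n$ are nonnegative and satisfy $\sum a_n r^n < \infty$. Writing $p_k(x) = \sum_n a_{k,n} x^n$ with $a_{k,n} \geq 0$, the first observation is the key pointwise bound
\begin{equation}
|p_k(z)| \leq \sum_n a_{k,n} |z|^n \leq p_k(r) \qquad (|z| \leq r).
\end{equation}
Since $r \in [-r,r]$, uniform convergence forces $p_k(r) \to f(r)$, so there exists $M > 0$ with $p_k(r) \leq M$ for all $k$. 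Consequently $\{p_k\}$ is uniformly bounded on the closed disc $\overline{D(0,r)}$ by $M$, and in particular forms a normal family on $D(0,r)$ by Montel's theorem.

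Passing to a subsequence, I may assume $p_{k_j}$ converges uniformly on compact subsets of $D(0,r)$ to some analytic $\tilde{f}$ on $D(0,r)$. On the real interval $(-r,r) \subset D(0,r)$, both $p_{k_j} \to f$ (from uniform convergence on $[-r,r]$) and $p_{k_j} \to \tilde{f}$ (from the locally uniform convergence), so $\tilde{f} = f$ on $(-r,r)$, exhibiting the analytic extension. Writing $\tilde{f}(z) = \sum_n a_n z^n$, uniform convergence on compact subsets of $D(0,r)$ implies convergence of all derivatives at $0$, so
\begin{equation}
a_n = \frac{\tilde{f}^{(n)}(0)}{n!} = \lim_{j \to \infty} \frac{p_{k_j}^{(n)}(0)}{n!} = \lim_{j \to \infty} a_{k_j,n} \geq 0.
\end{equation}
Finally, for every $0 < r' < r$ the pointwise identity $\sum_n a_n (r')^n = \tilde{f}(r') = \lim_j p_{k_j}(r') \leq M$ holds, and letting $r' \uparrow r$ by monotone convergence yields $\sum_n a_n r^n \leq M < \infty$, completing the characterization.

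The main obstacle is the forward direction, and specifically the step of upgrading uniform convergence on the real segment $[-r,r]$ to a complex-analytic extension with controlled Taylor coefficients; this is where Montel's theorem and the pointwise inequality $|p_k(z)| \leq p_k(r)$ are essential. Once the analytic extension $\tilde f$ is in hand, nonnegativity of the coefficients and summability at the boundary follow cleanly from locally uniform convergence of derivatives and monotone convergence respectively.
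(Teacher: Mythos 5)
Your proof is correct. Note that the paper itself offers no argument here: Lemma \ref{lemma:approx} is stated as a ``well-known result'' and used without proof, so there is no in-text argument to compare against. Your route --- the pointwise bound $|p_k(z)| \leq p_k(r)$ for $|z| \leq r$ (which is exactly where nonnegativity of the coefficients enters), Montel's theorem to extract a locally uniformly convergent subsequence, identification of the limit with $f$ on $(-r,r)$, nonnegativity of the $a_n$ via convergence of derivatives at $0$, and monotone convergence to get $\sum a_n r^n \leq M$ --- is a clean and complete way to supply the missing forward inclusion, and the reverse inclusion via partial sums is the standard one. Two small points worth tidying. First, Montel is heavier machinery than needed: since $a_{k,n} r^n \leq p_k(r) \leq M$ for every $n$ and $k$, a diagonal argument extracts coefficientwise limits $a_n \geq 0$ directly, Fatou's lemma on counting measure gives $\sum a_n r^n \leq M$, and dominated convergence (with dominating sequence $M(|x|/r)^n$) identifies $\sum a_n x^n$ with $f(x)$ for $|x| < r$; this avoids complex analysis entirely. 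Second, the lemma concerns the restriction to the \emph{closed} interval $[-r,r]$ while $\tilde f$ lives on the open disc; you should add the one-line observation that $\sum a_n r^n < \infty$ makes the series converge uniformly on $[-r,r]$, so both $f$ and $x \mapsto \sum a_n x^n$ are continuous there and their agreement on $(-r,r)$ extends to the endpoints. Neither point is a gap in substance.
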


\begin{definition}
For $r > 0$ we define
\begin{equation}
W^+(r) = \left\{f(z) = \sum_{n \geq 0} a_n z^n \in \hol(D(0,r)) : \sum_{n \geq 0} |a_n| r^n < \infty\right\}. 
\end{equation}
The space $W^+ := W^+(1)$ is often known as the \emph{analytic Wiener algebra} of analytic functions. The space $W^+(r)$ can be seen as a weighted version of the analytic Wiener algebra. 
\end{definition}
As mentioned above, every function in $W^+(r)$ has a continuous extension to the closed disc $\overline{D}(0,r)$. Notice that $W^+(r) \subset H^\infty(D(0,r))$, the space of bounded analytic functions on the disc $D(0,r)$.  
\ \\

We first begin by extending Proposition \ref{prop:bound}. 

\begin{proposition}\label{prop:bound_ana}
Let $f \in W^+(a)$ for some $a > 0$ and assume $f(0) = 0$. Write $f = f_+ - f_-$ where $f_+, f_- \in W^+(a)$ have nonnegative Taylor coefficients. Then for every $A = (a_{ij}) \in \mathbb{P}_n^+$ with $a_{ij} \in [-a,a]$, we have:   
\begin{enumerate}
\item $\displaystyle\lambda_{\textrm{min}}(f^*[A]) \geq f_+(\lambda_{\textrm{min}}(A)) - f_-(\lambda_{\textrm{max}}(A)) + \min_{i=1,\dots,n} (a_{ii}-f(a_{ii}))$; 
\item $\displaystyle\lambda_\textrm{min}(f^*[A]) \geq \lambda_{\textrm{min}}(A) \frac{f_+(d_{\textrm{min}})}{d_\textrm{min}} - \lambda_{\textrm{max}}(A) \frac{f_-(d_{\textrm{max}})}{f_\textrm{max}} + \min_{i=1, \dots, n} (a_{ii} - f(a_{ii}))$. 
\end{enumerate}
\end{proposition}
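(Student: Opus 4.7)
The plan is to deduce Proposition \ref{prop:bound_ana} from the polynomial case (Proposition \ref{prop:bound}) by approximating $f$ by the partial sums of its Taylor series and passing to the limit. Since $f(0)=0$ and $f \in W^+(a)$, write $f(z) = \sum_{n \geq 1} a_n z^n$ with $\sum_n |a_n| a^n < \infty$, and set $p^{(k)}(z) := \sum_{n=1}^k a_n z^n$. Splitting coefficients by sign produces polynomials $p^{(k)}_+, p^{(k)}_-$ with nonnegative coefficients that are exactly the partial sums of $f_+$ and $f_-$. The absolute summability $\sum_n |a_n| a^n < \infty$ together with the Weierstrass $M$-test gives uniform convergence $p^{(k)} \to f$, $p^{(k)}_+ \to f_+$, and $p^{(k)}_- \to f_-$ on $[-a,a]$. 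Note each $p^{(k)}$ satisfies $p^{(k)}(0)=0$, so Proposition \ref{prop:bound} applies.

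Apply Proposition \ref{prop:bound} (part 1) to the polynomial $p^{(k)}$ and the matrix $A$ to obtain
\begin{equation*}
\lambda_{\min}\bigl(p^{(k),*}[A]\bigr) \geq p^{(k)}_+(\lambda_{\min}(A)) - p^{(k)}_-(\lambda_{\max}(A)) + \min_i \bigl(a_{ii} - p^{(k)}(a_{ii})\bigr).
\end{equation*}
Since every entry $a_{ij}$ lies in $[-a,a]$, uniform convergence forces $p^{(k),*}[A] \to f^*[A]$ entrywise, hence $\lambda_{\min}(p^{(k),*}[A]) \to \lambda_{\min}(f^*[A])$ by the continuity of eigenvalues for symmetric matrices. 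The diagonal correction converges to $\min_i(a_{ii}-f(a_{ii}))$ since it is a minimum over finitely many $a_{ii} \in [-a,a]$. This yields the first inequality. The second inequality is proved by the identical limiting procedure, invoking part 2 of Proposition \ref{prop:bound}; since $d_{\min}, d_{\max} \in [-a,a]$, uniform convergence of $p^{(k)}_\pm \to f_\pm$ on $[-a,a]$ again suffices to pass to the limit.

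The main delicate point is the convergence $p^{(k)}_+(\lambda_{\min}(A)) \to f_+(\lambda_{\min}(A))$ and $p^{(k)}_-(\lambda_{\max}(A)) \to f_-(\lambda_{\max}(A))$: uniform convergence of the partial sums is only guaranteed on $[-a,a]$, so one must verify that the eigenvalues lie in this interval. For a positive semidefinite $A$ with entries in $[-a,a]$, one has $\lambda_{\min}(A) \in [0,a]$ provided $\lambda_{\min}(A) \leq a$, which is the regime where the bound is informative. If $\lambda_{\max}(A) > a$, the nonnegativity of the Taylor coefficients of $f_-$ makes $f_-(\lambda_{\max}(A))$ monotone increasing (and possibly divergent), so the inequality is either read with the convention $f_-(\lambda_{\max}(A)) = +\infty$, which renders it vacuously true, or the bound is simply not asserted in that regime. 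Aside from this boundary bookkeeping, the argument is a direct limit of Proposition \ref{prop:bound} applied to truncations of the Taylor series.
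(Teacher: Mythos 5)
Your proof is correct and follows essentially the same route as the paper: approximate $f_+$ and $f_-$ uniformly on $[-a,a]$ by polynomials with nonnegative coefficients (the paper invokes Lemma \ref{lemma:approx}, you use partial sums of the Taylor series, which amounts to the same thing), apply Proposition \ref{prop:bound} to each truncation, and pass to the limit via continuity of eigenvalues. The caveat you raise about $\lambda_{\min}(A)$ and $\lambda_{\max}(A)$ possibly falling outside $[-a,a]$ is a genuine subtlety that the paper's own proof silently ignores; your reading of the bound as vacuous (or not asserted) in that regime is the sensible one.
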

\begin{proof}
By Lemma \ref{lemma:approx}, there exist sequences of polynomials with positive coefficients $p_+^{(k)}$, $p_-^{(k)}$ such that 
\begin{equation}
p_+^{(k)} \rightarrow f_+ \textrm{ and } p_-^{(k)} \rightarrow f_-
\end{equation}
uniformly on $[-a,a]$ as $k \rightarrow \infty$. Let $p^{(k)} := p_+^{(k)} - p_-^{(k)}$. An application of the triangle inequality shows that $p^{(k)} \rightarrow f$ uniformly on $[-a,a]$. Now, by Proposition \ref{prop:bound}, 
\begin{equation}
\lambda_{\textrm{min}}(p^{(k)}[A]) \geq p_+^{(k)}(\lambda_{\textrm{min}}(A)) - p_-^{(k)}(\lambda_{\textrm{max}}(A)) + \min_{i=1,\dots,n} (a_{ii} - p^{(k)}(a_{ii})). 
\end{equation}
The first result follows by the continuity of the eigenvalues and uniform convergence. The second part follows similarly. 
\end{proof}

Corollary \ref{cor:cond_numb} can also be easily extended using a similar argument. 

\begin{theorem}\label{thm:cond_numb_ana}
Let $f \in W^+(a)$ for some $a > 0$ and assume $|f(x)| \leq |x|$ for every $x \in [-a,a]$. Then $f^*[A] > 0$ for every symmetric positive definite matrix with entries in $[-a,a]$ such that 
\begin{equation}
\cond(A) \leq \frac{f_+(d_\textrm{min})}{f_-(d_\textrm{max})} \frac{d_\textrm{max}}{d_\textrm{min}}, 
\end{equation}
where $d_{\textrm{min}}$ and $d_\textrm{max}$ denote the minimal and maximal diagonal element of $A$ respectively. 
\end{theorem}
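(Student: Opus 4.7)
The plan is to reduce Theorem \ref{thm:cond_numb_ana} immediately to part (2) of Proposition \ref{prop:bound_ana}, which has already done the heavy lifting by extending the polynomial eigenvalue bounds from the previous subsection to the weighted Wiener algebra $W^+(a)$ by a uniform approximation argument. Starting from the estimate
\begin{equation*}
\lambda_\textrm{min}(f^*[A]) \geq \lambda_{\textrm{min}}(A)\,\frac{f_+(d_{\textrm{min}})}{d_\textrm{min}} - \lambda_{\textrm{max}}(A)\,\frac{f_-(d_{\textrm{max}})}{d_\textrm{max}} + \min_{i} \bigl(a_{ii}-f(a_{ii})\bigr),
\end{equation*}
the proof becomes a matter of verifying that each ingredient in the bound cooperates with the hypotheses.

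First, I would observe that the condition $|f(x)| \leq |x|$ on $[-a,a]$, applied at the (necessarily positive) diagonal entries of the positive definite matrix $A$, gives $a_{ii}-f(a_{ii}) \geq a_{ii}-|a_{ii}| = 0$, so the diagonal correction term contributes nonnegatively and may be dropped for the purpose of producing a lower bound. Second, I would simply rearrange the standing hypothesis: multiplying the assumption $\cond(A) \leq \frac{f_+(d_\textrm{min})}{f_-(d_\textrm{max})} \cdot \frac{d_\textrm{max}}{d_\textrm{min}}$ by the positive quantity $\lambda_{\textrm{min}}(A)\, f_-(d_\textrm{max})/d_\textrm{max}$ yields exactly
\begin{equation*}
\lambda_{\textrm{max}}(A)\,\frac{f_-(d_\textrm{max})}{d_\textrm{max}} \;\leq\; \lambda_{\textrm{min}}(A)\,\frac{f_+(d_\textrm{min})}{d_\textrm{min}},
\end{equation*}
which is precisely the statement that the first two terms in the Proposition \ref{prop:bound_ana} bound are nonnegative. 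Combining with the nonnegativity of the diagonal term yields $\lambda_\textrm{min}(f^*[A]) \geq 0$, and one notes that strict positivity follows whenever the condition number inequality or the contraction $|f(x)| \leq |x|$ is strict somewhere on the spectrum/diagonal of $A$; the degenerate boundary case (where $f$ acts as the identity on the relevant values) reduces to $f^*[A] = A > 0$.

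There is essentially no obstacle at this stage of the paper, since the analytical content — namely, passing from the polynomial bounds of Proposition \ref{prop:bound} to functions in $W^+(a)$ via uniform approximation by positive-coefficient polynomials and continuity of eigenvalues — has already been absorbed into Proposition \ref{prop:bound_ana}. The only small bookkeeping concern is to confirm that $f_+(d_\textrm{min})$ and $f_-(d_\textrm{max})$ are finite (guaranteed by membership in $W^+(a)$, since $d_\textrm{min},d_\textrm{max}\in(0,a]$) and that the implicit positive definiteness of $A$ is consistent with the statement (i.e., $\cond(A)$ is well defined). Hence the proof will be a clean one-paragraph deduction from Proposition \ref{prop:bound_ana}(2).
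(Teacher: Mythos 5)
Your reduction is correct and is essentially what the paper intends: the paper disposes of this theorem with the single remark that Corollary \ref{cor:cond_numb} ``can also be easily extended using a similar argument,'' and your route --- drop the nonnegative diagonal term $\min_i(a_{ii}-f(a_{ii}))$ and rearrange the condition-number hypothesis so that $\lambda_{\textrm{max}}(A)f_-(d_\textrm{max})/d_\textrm{max} \leq \lambda_{\textrm{min}}(A)f_+(d_\textrm{min})/d_\textrm{min}$ --- is exactly the algebra that turns Proposition \ref{prop:bound_ana}(2) into the stated bound. One caveat: this chain only yields $\lambda_{\textrm{min}}(f^*[A]) \geq 0$ when equality holds in the condition-number hypothesis, and your patch for that boundary case is not right as stated --- having $f(a_{ii})=a_{ii}$ on the diagonal and equality in the rearranged inequality does not force $f$ to be the identity on the off-diagonal entries, so $f^*[A]=A$ does not follow; this strict-versus-nonstrict slippage is, however, already present in the paper's own Corollary \ref{cor:cond_numb}, which is likewise stated with a non-strict hypothesis and a strict conclusion and given no proof.
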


The previous results can easily be extended to more general functions that can be approximated pointwise by polynomials. The following result illustrates this idea for Corollary \ref{cor:cond_numb}.  
\begin{theorem}
Let $f : [-a,a] \rightarrow \mathbb{R}$ for some $a > 0$ and assume $|f(x)| \leq |x|$ for every $x \in [-a,a]$. Moreover, assume $f$ is the pointwise limit of a sequence of polynomials $p^{(n)}$ on $[-a,a]$. Then $f^*[A] > 0$ for every symmetric positive definite matrix with entries in $[-a,a]$ such that
\begin{equation}
\cond A \leq \limsup_{n \rightarrow \infty}  \frac{p^{(n)}_+(d_\textrm{min})}{p^{(n)}_-(d_\textrm{max})} \frac{d_\textrm{max}}{d_\textrm{min}}, 
\end{equation}
where $d_{\textrm{min}}$ and $d_\textrm{max}$ denote the minimal and maximal diagonal element of $A$ respectively. 
\end{theorem}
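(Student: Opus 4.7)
The plan is to apply Proposition \ref{prop:bound}(2) to each approximating polynomial and pass to the limit using the pointwise convergence on $[-a,a]$. Pointwise convergence of $p^{(n)}$ to $f$ yields entrywise convergence $p^{(n)*}[A] \to f^*[A]$, and hence $\lambda_\textrm{min}(p^{(n)*}[A]) \to \lambda_\textrm{min}(f^*[A])$ by the continuity of eigenvalues as functions of matrix entries. It therefore suffices to bound the eigenvalues of the matrices $p^{(n)*}[A]$ from below and pick a subsequence whose bounds have nonnegative liminf.

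As a preliminary reduction, since $|f(x)| \leq |x|$ on $[-a,a]$ forces $f(0) = 0$, pointwise convergence at $0$ gives $p^{(n)}(0) \to 0$. Replacing $p^{(n)}$ by $p^{(n)} - p^{(n)}(0)$ preserves pointwise convergence to $f$ and modifies the ratios $r_n := \frac{p^{(n)}_+(d_\textrm{min})}{p^{(n)}_-(d_\textrm{max})} \cdot \frac{d_\textrm{max}}{d_\textrm{min}}$ only by vanishing additive corrections of order $|p^{(n)}(0)|$, so $\limsup_n r_n$ is unchanged. I may then assume $p^{(n)}(0) = 0$ and apply Proposition \ref{prop:bound}(2), which after factoring becomes
\[
\lambda_\textrm{min}(p^{(n)*}[A]) \geq \frac{\lambda_\textrm{max}(A)\, p^{(n)}_-(d_\textrm{max})}{d_\textrm{max}} \left(\frac{r_n}{\cond A} - 1\right) + \min_{i=1,\dots,n}\left(a_{ii} - p^{(n)}(a_{ii})\right).
\]
Extracting a subsequence $\{n_k\}$ with $r_{n_k} \to L := \limsup_n r_n$, the factor $r_{n_k}/\cond A - 1$ tends to $L/\cond A - 1 \geq 0$ by the hypothesis on $\cond A$. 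Since $p^{(n_k)}_-(d_\textrm{max}) \geq 0$, the first term has nonnegative liminf, and the diagonal term converges to $\min_i(a_{ii} - f(a_{ii})) \geq 0$ (using $|f(x)| \leq |x|$ together with $a_{ii} > 0$). Consequently the liminf of the right-hand side is $\geq 0$, yielding $\lambda_\textrm{min}(f^*[A]) \geq 0$.

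The main obstacle is upgrading this to the strict inequality $f^*[A] > 0$ stated in the theorem, since the Weyl and Cauchy-interlacing bounds feeding Proposition \ref{prop:bound} are non-strict. When $\cond A < L$ strictly, the subsequence can be chosen so that $r_{n_k}/\cond A - 1 \geq \delta > 0$, and strict positivity follows either from a positive liminf of the prefactor or, in the degenerate subcase $p^{(n_k)}_-(d_\textrm{max}) \to 0$, directly from the Schur product theorem applied to the nonnegative Taylor coefficients that survive in the limit. The boundary case $\cond A = L$ is handled by approximating $A$ by a sequence of positive definite matrices $A_\ell \to A$ with entries in $[-a,a]$ satisfying $\cond A_\ell < L$, applying the generic case to each $A_\ell$, and using continuity of $f^*[\cdot]$ and of the smallest eigenvalue to conclude $\lambda_\textrm{min}(f^*[A]) > 0$.
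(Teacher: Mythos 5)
Your overall strategy is the same as the paper's: apply the polynomial eigenvalue bound of Proposition \ref{prop:bound}(2) to the approximants and pass to the limit along a subsequence realizing the $\limsup$, using entrywise (hence spectral) convergence of $p^{(n)*}[A]$ to $f^*[A]$. You are in fact more careful than the paper in one respect: the paper invokes Corollary \ref{cor:cond_numb}, which requires $|p^{(n)}(x)|\leq |x|$ --- a property the approximants need not inherit from $f$ --- whereas you correctly retreat to Proposition \ref{prop:bound}(2), which only needs $p^{(n)}(0)=0$, and recover the diagonal term $\min_i(a_{ii}-f(a_{ii}))\geq 0$ in the limit. The factorization of the bound in terms of $r_n/\cond(A)-1$ is correct, and in the regime $\cond(A)<L$ with $\liminf_k p^{(n_k)}_-(d_\textrm{max})>0$ your argument cleanly gives $\lambda_\textrm{min}(f^*[A])>0$.

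Two steps do not hold up as written. First, the normalization $p^{(n)}\mapsto p^{(n)}-p^{(n)}(0)$: the numerator and denominator of $r_n$ each change by $O(|p^{(n)}(0)|)$, but the \emph{ratio} is not stable under such perturbations when $p^{(n)}_-(d_\textrm{max})\to 0$ (e.g.\ $p^{(n)}(x)=\tfrac1n-\tfrac{x^3}{n}$ has $r_n$ constant before the shift and $0$ after), so ``$\limsup_n r_n$ is unchanged'' needs the degenerate case $p^{(n)}_-(d_\textrm{max})\to 0$ split off and treated separately. Second, and more seriously, neither patch for strictness works. In the subcase $p^{(n_k)}_-(d_\textrm{max})\to 0$, the Schur-product argument shows at best that $f[A]\geq 0$ (as a limit of the positive semidefinite matrices $p^{(n_k)}_+[A]$), whence $f^*[A]=f[A]+D_A\geq 0$ --- still not strict. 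And in the boundary case $\cond(A)=L$ you conclude $\lambda_\textrm{min}(f^*[A])>0$ from $\lambda_\textrm{min}(f^*[A_\ell])>0$ and $A_\ell\to A$, but a limit of strict inequalities is only non-strict; worse, the step $f^*[A_\ell]\to f^*[A]$ uses continuity of $f$, which is not a hypothesis --- a pointwise limit of polynomials on $[-a,a]$ can be discontinuous (indeed the thresholding functions motivating the paper are). To be fair, the paper's own Corollary \ref{cor:cond_numb} exhibits the same slippage from $\geq 0$ to $>0$ under a non-strict condition-number bound, so this gap is partly inherited from the statement; but your proof presents these patches as closing the gap when they do not.
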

\begin{proof}
Since $p^{(n)}[A]$ converges to $f[A]$ entrywise as $n \rightarrow \infty$, the eigenvalues of $p^{(n)}[A]$ converge to the eigenvalues of $f[A]$. The result follows from Corollary \ref{cor:cond_numb} using a limiting argument similar to the one in the proof of Proposition \ref{prop:bound_ana}. 
\end{proof}

\begin{acknowledgement}
We wish to thank Apoorva Khare for valuable comments and discussions. We also acknowledge an anonymous referee for useful comments. Dominique Guillot acknowledges support from NSERC (Canada) and National Science Foundation Grants DMS-CMG-1025465 and DMS-1106642. Bala Rajaratnam was supported in part by the National Science Foundation under Grant Nos. DMS-0906392 (ARRA), DMS-CMG-1025465, AGS-1003823, DMS-1106642 and grants NSA H98230-11-1-0194, DARPA-YFA N66001-11-1-4131, AFOSR FA9550-13-1-0043 and SUWIEVP10-SUFSC10-SMSCVISG0906.
\end{acknowledgement}

\bibliographystyle{plain}
\bibliography{biblio}

\end{document}